\documentclass[11pt,a4paper]{article}
\usepackage[T1]{fontenc}
\usepackage{lmodern}
\usepackage{amsmath,amssymb,amsthm,mathtools}
\usepackage{algorithm,algpseudocode,geometry,enumitem,booktabs,array,microtype}
\usepackage{xcolor}
\usepackage[colorlinks=true,linkcolor=blue,citecolor=blue,urlcolor=blue]{hyperref}
\allowdisplaybreaks[1]
\newtheoremstyle{research}{6pt}{6pt}{\normalfont}{}\bfseries{.}{0.5em}{}
\theoremstyle{research}
\newtheorem{assumption}{Assumption}

\newtheorem{lemma}{Lemma}
\newtheorem{proposition}{Proposition}
\newtheorem{theorem}{Theorem}

\newtheorem{remark}{Remark}
\algrenewcommand\algorithmicrequire{\textbf{Input:}}
\algrenewcommand\algorithmicensure{\textbf{Output:}}
\newcommand{\R}{\mathbb R}
\newcommand{\eps}{\varepsilon}
\newcommand{\E}{\mathbb E}
\newcommand{\norm}[1]{\lVert #1\rVert}
\newcommand{\ip}[2]{\langle #1,#2\rangle}
\DeclareMathOperator{\dist}{dist}
\DeclareMathOperator*{\argmin}{argmin}
\makeatletter
\renewcommand{\@fnsymbol}[1]{%
	\ifcase#1\or\textdagger\or\textasteriskcentered\else\@ctrerr\fi}
\makeatother
\title{Single-Loop Stochastic Projected Damped Extragradient Methods for Stochastic Nonconvex--(Strongly) Concave Minimax Optimization}
\author{%
	Huiling Zhang\(^{1}\)\thanks{The first two authors contributed equally to this paper.}\quad
	Minhao Zhang\(^{2}\)\footnotemark[1]\quad
	Zi Xu\(^{2}\)\thanks{Corresponding author.}\\[0.5em]
	{\small \(^{1}\)LSEC, ICMSEC, Academy of Mathematics and Systems Science,}\\[-0.15em]
	{\small Chinese Academy of Sciences, Beijing 100190, China}\\[-0.15em]
	{\small \texttt{zhanghl@amss.ac.cn}}\\[0.35em]
	{\small \(^{2}\)Department of Mathematics, College of Sciences, Shanghai University,}\\[-0.15em]
	{\small Shanghai 200444, China}\\[-0.15em]
	{\small \texttt{zhangminhao@shu.edu.cn}; \texttt{xuzi@shu.edu.cn}}
}
\date{}
\begin{document}
	\maketitle
	
	\begin{abstract}
		We develop single-loop stochastic projected damped extragradient methods for stochastic nonconvex--(strongly) concave minimax optimization, with complexity guarantees for both game stationarity (GS) and optimization stationarity (OS). Our approach combines a stochastic projected damped extragradient (SPDE) method with a recursive variance-reduced variant, VR-SPDE, both of which retain a single-loop structure. Under an unbiased stochastic gradient oracle with uniformly bounded variance, SPDE finds an $\varepsilon$-game-stationary point with stochastic first-order oracle (SFO) complexities of $O(\kappa\varepsilon^{-4})$ and $O(\varepsilon^{-5})$ in the nonconvex--strongly concave and nonconvex--concave settings, respectively, where $\kappa=L/\mu$. Under an additional mean-square Lipschitz condition on the stochastic gradients, VR-SPDE improves these GS complexities to $O(\kappa^{3/2}\varepsilon^{-3})$ and $O(\varepsilon^{-9/2})$, respectively. For an $\varepsilon$-optimization-stationary point, SPDE achieves SFO complexities of $O(\kappa\varepsilon^{-4})$ and $O(\varepsilon^{-6})$, while VR-SPDE achieves $O(\kappa^{3/2}\varepsilon^{-3})$ and $O(\varepsilon^{-6})$, in the two settings, respectively. These OS guarantees match the best-known bounds achieved by multi-loop methods while preserving a single-loop implementation. To the best of our knowledge, our results provide the best-known SFO complexity guarantees among single-loop stochastic first-order methods for the respective stationarity criteria and problem classes.
	\end{abstract}
	
	\noindent\textbf{Keywords:}  stochastic nonconvex--(strongly) concave minimax optimization;
	stochastic projected damped extragradient method; variance reduction; single-loop algorithms.
	
\section{Introduction}\label{sec:introduction}

We consider the stochastic minimax problem
\begin{equation}\label{eq:problem}
	\min_{x\in X}\max_{y\in Y}
	F(x,y):=\E_{\omega\sim\mathcal D}[f(x,y;\omega)],
\end{equation}
where $X\subseteq\R^n$ is closed and convex,
$Y\subseteq\R^p$ is compact and convex, and $F$ is smooth.
The objective may be nonconvex in $x$ and is concave or strongly
concave in $y$.
The random variable $\omega$ follows an unknown distribution
$\mathcal D$, and $\E$ denotes expectation.
Problems of this form arise in distributed nonconvex
optimization~\cite{Giannakis}, wireless
systems~\cite{Chen20}, and statistical learning~\cite{Abadeh}.
Their growing scope of applications motivates stochastic first-order
methods that combine a simple iteration structure with strong
oracle complexity guarantees.

Two stationarity criteria are central to nonconvex minimax
optimization: game stationarity (GS) and optimization stationarity
(OS).
Game stationarity concerns first-order conditions for the two
players at a primal--dual pair, whereas optimization stationarity
concerns the minimization of the primal value function
\[
	\phi(x):=\max_{y\in Y}F(x,y).
\]
These criteria capture different aspects of approximate solutions,
and their complexity guarantees must be distinguished.
This distinction is particularly relevant in the
nonconvex--concave setting, where $\phi$ may be nonsmooth.
We state the precise GS and OS criteria in
Section~\ref{sec:problem} and analyze both criteria for the methods
developed in this paper.

For stochastic nonconvex--strongly concave problems, representative
multi-loop methods obtain strong stochastic first-order oracle
(SFO) complexity guarantees by approximately solving an inner
maximization problem or a regularized saddle-point subproblem.
Luo et al.~\cite{luo2020sreda} proposed SREDA, which combines nested
ascent with SPIDER recursion and requires
$O(\kappa^3\eps^{-3})$ stochastic gradient evaluations, where
$\kappa=L/\mu$, $L$ is the gradient Lipschitz constant, and $\mu$
is the strong concavity parameter.
Zhang et al.~\cite{zhang2022sapdplus} developed SAPD+, which combines
an inexact proximal-point outer scheme with an accelerated
primal--dual inner solver.
Their method requires $O(L\kappa\eps^{-4})$ oracle calls, while
its variance-reduced variant requires $O(L\kappa^2\eps^{-3})$ calls.
Among single-loop methods, Lin et al.~\cite{lin2020gda} analyzed
two-timescale stochastic gradient descent--ascent (SGDA) and
established an $O(\kappa^3\eps^{-4})$ stochastic gradient complexity
for value-function stationarity.
Huang et al.~\cite{huang2022accelerated} introduced Acc-MDA with
a STORM-type estimator and obtained complexities of
$\widetilde O(\kappa^{9/2}\eps^{-3})$ without a large batch and
$\widetilde O(\kappa^{5/2}\eps^{-3})$ with a batch size of order
$\kappa^4$.

For stochastic nonconvex--concave problems, the possible
nonsmoothness of the value function presents an additional
challenge.
Multi-loop methods often address this difficulty through proximal
regularization or regularized saddle-point subproblems.
Zhang et al.~\cite{zhang2022sapdplus} applied SAPD+ to this setting
and established an $O(\eps^{-6})$ oracle complexity.
Among single-loop methods, Lin et al.~\cite{lin2020gda} established
an $O(\eps^{-8})$ stochastic gradient complexity for SGDA under
value-function stationarity.
For game stationarity, Zhang and
Xu~\cite{zhangxu2025formda} proposed FORMDA, which combines
vanishing dual regularization with recursive momentum and achieves
an iteration complexity of $\widetilde O(\eps^{-13/2})$.

These results highlight the challenge of obtaining strong
complexity guarantees within a single-loop implementation.
Multi-loop methods achieve sharper dependence on the target
accuracy in several settings, but their outer iterations may
require inner maximization, proximal, or regularized saddle-point
subproblems to be solved to prescribed accuracies.
The resulting inner-loop lengths and parameter schedules can
depend on the target accuracy.
This motivates the following question:
\begin{quote}
	Can single-loop stochastic first-order methods achieve improved
	game-stationarity guarantees and match the best-known
	multi-loop optimization-stationarity guarantees for both
	nonconvex--strongly concave and nonconvex--concave problems?
\end{quote}

We address this question through a stochastic projected damped
extragradient method and its recursive variance-reduced variant.
Both methods retain a single-loop structure, and their analysis
provides complexity guarantees for both GS and OS.

\paragraph{Contributions.}
Our contributions concern the algorithmic design and the
complexity guarantees under the two stationarity criteria.

\begin{enumerate}[label=(\roman*),leftmargin=2.2em]
	\item \textbf{Single-loop stochastic damped extragradient methods.}
	We propose a stochastic projected damped extragradient
	(SPDE) method under an unbiased stochastic gradient oracle
	with uniformly bounded variance.
	We further incorporate a recursive variance-reduction
	mechanism to obtain VR-SPDE under an additional mean-square
	Lipschitz condition on the stochastic gradients.
	Both algorithms use a single-loop structure without nested
	iterative solvers for maximization, proximal, or saddle-point
	subproblems.
	
	\item \textbf{Complexity guarantees for game stationarity.}
	SPDE finds an $\eps$-game-stationary point with total SFO
	complexities of $O(\kappa\eps^{-4})$ and $O(\eps^{-5})$
	in the nonconvex--strongly concave and nonconvex--concave
	settings, respectively.
	The corresponding iteration complexities are
	$O(\sqrt{\kappa}\eps^{-2})$ and $O(\eps^{-5/2})$.
	Under the additional mean-square Lipschitz condition,
	VR-SPDE improves the GS oracle complexities to
	$O(\kappa^{3/2}\eps^{-3})$ and $O(\eps^{-9/2})$,
	respectively, while preserving the single-loop structure.
	
	\item \textbf{Complexity guarantees for optimization stationarity.}
	For an $\eps$-optimization-stationary point, SPDE requires
	$O(\kappa\eps^{-4})$ and $O(\eps^{-6})$ SFO calls in the
	nonconvex--strongly concave and nonconvex--concave settings,
	respectively.
	VR-SPDE achieves the corresponding complexities of
	$O(\kappa^{3/2}\eps^{-3})$ and $O(\eps^{-6})$.
	These OS guarantees match the best-known multi-loop
	dependence on the target accuracy in both settings while
	retaining a single-loop implementation.
\end{enumerate}

To the best of our knowledge, these results provide the best-known
total SFO complexity guarantees among single-loop stochastic
first-order methods for the respective stationarity criteria and
problem classes.
In particular, the same single-loop algorithms support both GS
and OS guarantees.
In the strongly concave setting, the two criteria have the same
stated SFO bounds for each method.
In the merely concave setting, the GS bounds are
$O(\eps^{-5})$ for SPDE and $O(\eps^{-9/2})$ for VR-SPDE,
whereas both methods attain an OS bound of $O(\eps^{-6})$.
Table~\ref{tab:results} summarizes representative results and
identifies the stationarity criterion associated with each bound.
\begin{table}[t]
	\centering
	\caption{Representative complexity guarantees for stochastic
		nonconvex minimax optimization.
		Bounds are total SFO complexities unless marked by
		$\dagger$, which denotes an iteration complexity.
		The dependence on $L$ is suppressed in this comparison.}
	\label{tab:results}
	\scriptsize
	\setlength{\tabcolsep}{3pt}
	\renewcommand{\arraystretch}{1.12}
	\begin{tabular}{@{}lccccc@{}}
		\toprule
		Method & Setting & Loop & VR & Criterion & Complexity \\
		\midrule
		SGDA~\cite{lin2020gda}
		& NC--SC & Single & No & OS
		& $O(\kappa^3\eps^{-4})$ \\
		
		Acc-MDA~\cite{huang2022accelerated}
		& NC--SC & Single & Yes & OS
		& $\widetilde O(\kappa^{9/2}\eps^{-3})$ \\
		
		SREDA~\cite{luo2020sreda}
		& NC--SC & Multi & Yes & OS
		& $O(\kappa^3\eps^{-3})$ \\
		
		SAPD+~\cite{zhang2022sapdplus}
		& NC--SC & Multi & No & OS
		& $O(\kappa\eps^{-4})$ \\
		
		VR-SAPD+~\cite{zhang2022sapdplus}
		& NC--SC & Multi & Yes & OS
		& $O(\kappa^2\eps^{-3})$ \\
		
		\textbf{SPDE (this paper)}
		& NC--SC & Single & No & GS/OS
		& $\boldsymbol{O(\kappa\eps^{-4})}$ \\
		
		\textbf{VR-SPDE (this paper)}
		& NC--SC & Single & Yes & GS/OS
		& $\boldsymbol{O(\kappa^{3/2}\eps^{-3})}$ \\
		
		\midrule
		SGDA~\cite{lin2020gda}
		& NC--C & Single & No & OS
		& $O(\eps^{-8})$ \\
		
		SAPD+~\cite{zhang2022sapdplus}
		& NC--C & Multi & No & OS
		& $O(\eps^{-6})$ \\
		
		\textbf{SPDE (this paper)}
		& NC--C & Single & No & OS
		& $\boldsymbol{O(\eps^{-6})}$ \\
		
		\textbf{VR-SPDE (this paper)}
		& NC--C & Single & Yes & OS
		& $\boldsymbol{O(\eps^{-6})}$ \\
			
		\midrule
		FORMDA~\cite{zhangxu2025formda}
		& NC--C & Single & Yes & GS
		& $\widetilde O(\eps^{-13/2})^{\dagger}$ \\
		
		\textbf{SPDE (this paper)}
		& NC--C & Single & No & GS
		& $\boldsymbol{O(\eps^{-5})}$ \\
		
		\textbf{VR-SPDE (this paper)}
		& NC--C & Single & Yes & GS
		& $\boldsymbol{O(\eps^{-9/2})}$ \\
		\bottomrule
	\end{tabular}
\end{table}

In Table~\ref{tab:results}, NC--SC and NC--C denote the
nonconvex--strongly concave and nonconvex--concave settings,
respectively.
``VR'' indicates whether a method uses recursive variance
reduction, and ``GS'' and ``OS'' denote game stationarity and
optimization stationarity.
The Acc-MDA entry reports the bound without a large batch.
Each result is stated under the assumptions of the corresponding
reference; comparisons must account for both the stationarity
criterion and the stochastic oracle assumptions.

\paragraph{Organization and notation.}
Section~\ref{sec:problem} presents the preliminaries and defines
the GS and OS criteria.
Section~\ref{sec:plain} introduces SPDE and its stochastic oracle
model, develops a unified convergence analysis, and establishes
the GS complexity bounds for the nonconvex--strongly concave and
nonconvex--concave settings.
Section~\ref{sec:vr} introduces the paired oracle, recursive
estimator, and VR-SPDE algorithm, and derives the corresponding
variance-reduced GS complexity bounds with all oracle calls
counted.
Section~\ref{sec:os} establishes the OS guarantees for the original
primal value function.
Section~\ref{sec:conclusion} concludes the paper.

Throughout, $\norm{\cdot}$ denotes the Euclidean norm,
$\ip{\cdot}{\cdot}$ denotes the corresponding inner product,
and $\Pi_C$ and $N_C$ denote the Euclidean projection onto and
the normal cone of a closed convex set $C$, respectively.
The notation $O(\cdot)$ suppresses constants independent of the
target accuracy $\eps$ and of any parameters whose dependence
is displayed explicitly.
The notation $\widetilde O(\cdot)$ additionally suppresses
logarithmic factors.

\section{Preliminaries and stationarity criteria}\label{sec:problem}

We first state the standing assumptions and then introduce the
stationarity criteria used in our complexity analysis.
Throughout, product spaces are equipped with the Euclidean norm.

\begin{assumption}\label{ass:model}
	\begin{enumerate}[label=(\roman*),leftmargin=2.2em]
		\item
		The set $X\subseteq\R^n$ is nonempty, closed, and convex.
		The set $Y\subseteq\R^p$ is nonempty, compact, and convex,
		with
		\[
		D_Y:=\max_{y\in Y}\norm{y}>0.
		\]
		
		\item
		For almost every $\omega$, the sample loss
		$f(\cdot,\cdot;\omega)$ is differentiable on a
		neighborhood of $X\times Y$.
		
		\item
		The function $F$ is continuously differentiable on a
		neighborhood of $X\times Y$, and there exists $L>0$
		such that, for all $(x,y),(x',y')\in X\times Y$,
		\begin{equation}\label{eq:smooth}
			\norm{\nabla F(x',y')-\nabla F(x,y)}
			\le L\norm{(x'-x,y'-y)}.
		\end{equation}
		
		\item
		For every $x\in X$, the function $F(x,\cdot)$ is concave
		on $Y$. The primal value function satisfies
		\begin{equation}\label{eq:value}
			\phi(x):=\max_{y\in Y}F(x,y),
			\qquad x\in X,
			\qquad
			\phi_{\inf}:=\inf_{x\in X}\phi(x)>-\infty.
		\end{equation}
	\end{enumerate}
\end{assumption}

Compactness of $Y$ and continuity of $F$ ensure that the maximum
in~\eqref{eq:value} is attained for every $x\in X$.
The quantity $D_Y$ bounds the norm of points in $Y$; it is not
the diameter of $Y$.

In the nonconvex--strongly concave setting, we additionally assume
that $F(x,\cdot)$ is $\mu$-strongly concave on $Y$, uniformly in
$x\in X$, for some $\mu>0$.
Equivalently, for every $x\in X$ and $y,y'\in Y$,
\[
F(x,y')
\le F(x,y)
+\ip{\nabla_y F(x,y)}{y'-y}
-\frac{\mu}{2}\norm{y'-y}^2.
\]
We write $\kappa:=L/\mu$ in this setting.
The stochastic oracle assumptions are stated separately with the
corresponding algorithms.

\paragraph{Projection and normal cone.}
For a nonempty closed convex set $C\subseteq\R^d$, define
\begin{equation}\label{eq:normal}
	\Pi_C(w):=\argmin_{u\in C}\frac12\norm{u-w}^2,
	\qquad
	N_C(u):=
	\begin{cases}
		\{\eta\in\R^d:
		\ip{\eta}{a-u}\le0\ \text{for all }a\in C\},
		& u\in C,\\
		\varnothing, & u\notin C.
	\end{cases}
\end{equation}
The projection is uniquely defined and satisfies
\begin{equation}\label{eq:projection}
	u=\Pi_C(w)
	\quad\Longleftrightarrow\quad
	w-u\in N_C(u),
	\qquad
	\norm{\Pi_C(w)-\Pi_C(w')}\le\norm{w-w'}.
\end{equation}
For a nonempty set $A$, we use
$\dist(v,A):=\inf_{a\in A}\norm{v-a}$.

\paragraph{Game stationarity.}
For a feasible pair $(x,y)\in X\times Y$, define the
game-stationarity residual of the original minimax problem by
\begin{equation}\label{eq:residual}
	\mathcal R(x,y)^2
	:=
	\dist^2\bigl(0,\nabla_x F(x,y)+N_X(x)\bigr)
	+
	\dist^2\bigl(0,-\nabla_y F(x,y)+N_Y(y)\bigr).
\end{equation}
Thus $\mathcal R(x,y)=0$ if and only if
\[
0\in\nabla_x F(x,y)+N_X(x),
\qquad
0\in-\nabla_y F(x,y)+N_Y(y).
\]
These inclusions express the first-order conditions for the
minimization and maximization variables, respectively.
By concavity of $F(x,\cdot)$, the second inclusion is equivalent
to $y\in\arg\max_{v\in Y}F(x,v)$.

We call a random feasible pair $(x_{\rm out},y_{\rm out})$
$\eps$-game-stationary in expectation if
\begin{equation}\label{eq:goal}
	\E\bigl[\mathcal R(x_{\rm out},y_{\rm out})^2\bigr]
	\le\eps^2.
\end{equation}
By the Cauchy--Schwarz inequality, this condition also implies
\[
\E\bigl[\mathcal R(x_{\rm out},y_{\rm out})\bigr]
\le\eps.
\]

\paragraph{Optimization stationarity.}
Optimization stationarity concerns the constrained minimization
of the primal value function $\phi$.
To incorporate the constraint explicitly, define
\[
\bar\phi(x):=
\begin{cases}
	\phi(x), & x\in X,\\
	+\infty, & x\notin X.
\end{cases}
\]
For each $y\in Y$, Assumption~\ref{ass:model} implies that
$F(\cdot,y)$ is $L$-weakly convex on $X$; that is,
\[
x\longmapsto F(x,y)+\frac{L}{2}\norm{x}^2
\]
is convex on $X$.
Taking the pointwise maximum over $y\in Y$ shows that $\phi$
is also $L$-weakly convex on $X$.
Moreover, continuity of $F$ and compactness of $Y$ imply that
$\phi$ is continuous relative to $X$.
Consequently, $\bar\phi$ is a proper, lower semicontinuous,
$L$-weakly convex function on $\R^n$.

For $z\in\R^n$, define the Moreau envelope of $\bar\phi$ with
parameter $1/(2L)$ and its associated proximal point by
\begin{equation}\label{eq:os-original-envelope}
	p_0(z):=\min_{x\in X}
	\left\{\phi(x)+L\norm{x-z}^2\right\},
	\qquad
	x_0^\star(z):=\argmin_{x\in X}
	\left\{\phi(x)+L\norm{x-z}^2\right\}.
\end{equation}
Since $1/(2L)<1/L$, the standard Moreau-envelope theorem for
weakly convex functions guarantees that $x_0^\star(z)$ exists
and is unique for every $z\in\R^n$.
Furthermore, $p_0$ is continuously differentiable on $\R^n$, with
\begin{equation}\label{eq:os-original-gradient}
	\nabla p_0(z)=2L\bigl(z-x_0^\star(z)\bigr).
\end{equation}
We therefore use the optimization-stationarity measure
\begin{equation}\label{eq:os-criterion}
	\mathcal S_{\rm OS}(z):=\norm{\nabla p_0(z)}.
\end{equation}
A random output $z_{\rm out}\in\R^n$ is called
$\eps$-optimization-stationary in expectation if
\[
\E\bigl[\mathcal S_{\rm OS}(z_{\rm out})^2\bigr]
\le\eps^2.
\]
This condition also implies
$\E[\mathcal S_{\rm OS}(z_{\rm out})]\le\eps$.

The GS criterion measures first-order residuals at a feasible
primal--dual pair, whereas the OS criterion measures the gradient
of a Moreau envelope of the constrained primal value function.
The envelope is defined for every $z\in\R^n$, and its proximal
point $x_0^\star(z)$ always belongs to $X$.
Although the two criteria are related, their approximate
guarantees are not interchangeable without further analysis.
We establish the corresponding complexity bounds separately.

\section{A stochastic projected damped extragradient method}
\label{sec:plain}

We develop a stochastic projected damped extragradient (SPDE)
method for nonconvex--(strongly) concave minimax optimization.
The method combines a projected predictor--corrector step,
a damped dual momentum recursion, and a relaxed update of a
primal proximal center.
These updates are performed within a single loop: each iteration
uses a fixed sequence of projections and stochastic gradient
queries, without an inner iterative solver for a regularized
saddle-point subproblem.
In this section, we establish the game-stationarity guarantees;
the corresponding optimization-stationarity guarantees are
developed in Section~\ref{sec:os}.

\paragraph{Regularized objective.}
For a center $z\in X$ and a regularization parameter
$0<\tau\le L$, define
\begin{equation}\label{eq:G}
	\begin{aligned}
		g(x,y;z,\omega)
		&:=f(x,y;\omega)
		+L\norm{x-z}^2-\frac{\tau}{2}\norm{y}^2,\\
		G(x,y;z)
		&:=\E_{\omega\sim\mathcal D}[g(x,y;z,\omega)]=F(x,y)+L\norm{x-z}^2-\frac{\tau}{2}\norm{y}^2.
	\end{aligned}
\end{equation}
Under Assumption~\ref{ass:model}, $F(\cdot,y)$ is
$L$-weakly convex on $X$.
Consequently, for each fixed $z$, $G(\cdot,\cdot;z)$ is
$L$-strongly convex in $x$ and $\tau$-strongly concave in $y$.
If $F(x,\cdot)$ is $\mu$-strongly concave, the dual strong
concavity modulus of $G$ is $\mu+\tau$.
These properties concern the population objective $G$;
they need not hold for individual sample objectives $g$.
The regularized objective guides the algorithmic updates.
The stationarity criteria remain those of the original problem:
the residual $\mathcal R$ in~\eqref{eq:residual} and the
Moreau-envelope measure $\mathcal S_{\rm OS}$
in~\eqref{eq:os-criterion}.
The effect of dual regularization on these criteria is accounted
for in the subsequent complexity analysis.

\paragraph{Stochastic gradient estimates.}
For a feasible query point $u=(x,y)$ and a batch
$\Omega=(\omega_1,\ldots,\omega_m)$ of independent samples from
$\mathcal D$, define
\begin{equation}\label{eq:batch-estimator-definition}
	\widehat{\nabla}F(u;\Omega)
	:=\frac{1}{m}\sum_{i=1}^m\nabla f(x,y;\omega_i).
\end{equation}
The regularization gradients are evaluated exactly, so stochastic
estimation is needed only for $\nabla F$.

At iteration $t$, SPDE queries three fresh batches sequentially:
at the current iterate, at a projected predictor, and at the
corrected iterate.
We use the notation
\begin{equation}\label{eq:three-estimators}
	\begin{aligned}
		\widehat g_t^{(0)}
		&:=\widehat{\nabla}F((x_t,y_t);\Omega_t^{(0)}),\\
		\widehat g_t^{(1)}
		&:=\widehat{\nabla}F(
		(\widetilde x_t,\widetilde y_t);\Omega_t^{(1)}),\\
		\widehat g_t^{(2)}
		&:=\widehat{\nabla}F(
		(x_{t+1},y_{t+1});\Omega_t^{(2)}),
	\end{aligned}
\end{equation}
and write
$\widehat g_t^{(r)}
=(\widehat g_{x,t}^{(r)},\widehat g_{y,t}^{(r)})$.
Each batch is drawn independently of all information available
before that batch is queried.

\paragraph{Single-loop update.}
SPDE maintains the primal--dual iterate $(x_t,y_t)$, the proximal
center $z_t$, normal-cone vectors $\xi_t$ and $n_t$, and a dual
momentum vector $v_t$.
The first stochastic gradient estimate generates the projected
predictor $(\widetilde x_t,\widetilde y_t)$.
The second estimate produces the corrected iterate and the
associated normal-cone vectors.
It also forms the intermediate momentum $\bar v_t$ used in the
dual correction. After the corrected point has been computed, a third fresh batch
updates the dual momentum at that point.
This sampling order makes the endpoint gradient error
conditionally mean zero given the corrected iterate and its
normal-cone vectors.
Finally, the relaxed update
$z_{t+1}=(1-\beta)z_t+\beta x_{t+1}$ moves the proximal center
toward the corrected primal iterate.
Thus the center evolves together with the predictor--corrector
and momentum updates, without a separate outer loop.

The coupling of the normal-cone corrections with the damped
dual momentum is a central feature of SPDE.
In particular, the scaling of $n_{t+1}$ by $1+k$ is part of
this coupling, and the same vector enters the next momentum
update.
Algorithm~\ref{alg:stochastic} gives the complete procedure.
The convergence results below specify admissible parameter
choices within the ranges stated in the algorithm.

\begin{algorithm}[!ht]
	\caption{Stochastic projected damped extragradient (SPDE)}
	\label{alg:stochastic}
	\begin{algorithmic}[1]
		\Require
		Deterministic $x_0\in X$ and $y_0\in Y$;
		batch size $m\ge1$ and iteration budget $T\ge1$;
		$0<\tau\le L$, $h>0$, $0<\alpha\le1$,
		$k\ge0$, and $0<\beta\le1$.
		\State
		Set $z_0=x_0$, $\xi_0=0\in\R^n$,
		and $n_0=v_0=0\in\R^p$.
		\For{$t=0,\ldots,T-1$}
		\State
		Draw a fresh batch $\Omega_t^{(0)}$ of $m$ samples
		and compute $\widehat g_t^{(0)}$
		by~\eqref{eq:three-estimators}.
		\State
		$\displaystyle
		\widetilde x_t
		=\Pi_X\!\left(
		x_t-h\bigl(
		\widehat g_{x,t}^{(0)}
		+2L(x_t-z_t)+\xi_t
		\bigr)\right)$.
		\State
		$\displaystyle
		\widetilde y_t
		=\Pi_Y\!\left(
		y_t+h\bigl(
		\widehat g_{y,t}^{(0)}
		-\tau y_t-n_t+v_t
		\bigr)\right)$.
		
		\State
		Draw a fresh batch $\Omega_t^{(1)}$ of $m$ samples
		and compute $\widehat g_t^{(1)}$
		by~\eqref{eq:three-estimators}.
		\State
		$\displaystyle
		\bar v_t
		=\alpha v_t
		+k\bigl(
		\widehat g_{y,t}^{(1)}-\tau\widetilde y_t
		\bigr)$.
		\State
		$\displaystyle
		x_{t+1}
		=\Pi_X\!\left(
		x_t-h\bigl(
		\widehat g_{x,t}^{(1)}
		+2L(\widetilde x_t-z_t)
		\bigr)\right)$.
		\State
		$\displaystyle
		\xi_{t+1}
		=\frac{x_t-x_{t+1}}{h}
		-\widehat g_{x,t}^{(1)}
		-2L(\widetilde x_t-z_t)$.
		\State
		$\displaystyle
		y_{t+1}
		=\Pi_Y\!\left(
		y_t+h\bigl(
		\widehat g_{y,t}^{(1)}
		-\tau\widetilde y_t+\bar v_t
		\bigr)\right)$.
		\State
		$\displaystyle
		n_{t+1}
		=\frac{1}{1+k}\left(
		\frac{y_t-y_{t+1}}{h}
		+\widehat g_{y,t}^{(1)}
		-\tau\widetilde y_t+\bar v_t
		\right)$.
		
		\State
		Draw a fresh batch $\Omega_t^{(2)}$ of $m$ samples
		and compute $\widehat g_t^{(2)}$
		by~\eqref{eq:three-estimators}.
		\State
		$\displaystyle
		v_{t+1}
		=\alpha v_t
		+k\bigl(
		\widehat g_{y,t}^{(2)}
		-\tau y_{t+1}-n_{t+1}
		\bigr)$.
		\State
		$\displaystyle
		z_{t+1}
		=z_t+\beta(x_{t+1}-z_t)$.
		\EndFor
		\State
		Draw $J\sim\operatorname{Unif}\{0,\ldots,T-1\}$
		independently of all oracle samples.
		\Ensure
		$(x_{\rm out},y_{\rm out})
		=(x_{J+1},y_{J+1})$.
	\end{algorithmic}
\end{algorithm}

The output rule is used for the GS guarantees in this section.
It requires no evaluation of the stationarity residual.
The output used for the OS guarantees is specified in
Section~\ref{sec:os}.

\subsection{Stochastic oracle and batch errors}

We next state the stochastic oracle assumptions and describe the
information available at each query.
These assumptions connect sample gradients to the population
gradient; differentiability of the sample losses alone does not
assert unbiasedness.

\begin{assumption}\label{ass:oracle}
	Let $\mathcal A$ be the sigma-algebra representing the
	information available before an oracle query, and let
	$u=(x,y)\in X\times Y$ be an $\mathcal A$-measurable
	query point.
	The oracle draws a fresh sample
	$\omega\sim\mathcal D$, independently of $\mathcal A$,
	and returns a measurable sample gradient
	$\nabla f(u;\omega)\in\R^{n+p}$ satisfying
	\begin{equation}\label{eq:oracle}
		\E[\nabla f(u;\omega)\mid\mathcal A]
		=\nabla F(u),
		\qquad
		\E\!\left[
		\norm{\nabla f(u;\omega)-\nabla F(u)}^2
		\,\middle|\,\mathcal A
		\right]\le\sigma^2.
	\end{equation}
	Here $\sigma\ge0$ is independent of the query point,
	iteration index, and regularization parameter.
	Within each batch, the samples are independent draws from
	$\mathcal D$ and are independent of the pre-query
	information.
	Each evaluation of $\nabla f(u;\omega)$ counts as one
	stochastic first-order oracle (SFO) call.
\end{assumption}

Conditional independence and unbiasedness imply that the
mini-batch estimator satisfies
\begin{equation}\label{eq:batch}
	\begin{aligned}
		\E[\widehat{\nabla}F(u;\Omega)\mid\mathcal A]
		&=\nabla F(u),\\
		\E\!\left[
		\norm{\widehat{\nabla}F(u;\Omega)-\nabla F(u)}^2
		\,\middle|\,\mathcal A
		\right]
		&\le\frac{\sigma^2}{m}.
	\end{aligned}
\end{equation}
Indeed, the cross terms between distinct sample-gradient errors
have zero conditional expectation.
No smoothness or concavity assumption is imposed on individual
sample losses beyond the differentiability in
Assumption~\ref{ass:model}.
All smoothness and concavity properties used in this section
concern the population objective $F$.
In particular, SPDE does not require the mean-square Lipschitz
condition used later for VR-SPDE.

\paragraph{Oracle accounting.}
Every occurrence of $\widehat g_t^{(1)}$ within iteration $t$
uses the same computed vector.
The third batch is used for the endpoint momentum update,
and the first batch of iteration $t+1$ is drawn afresh.
With three batches of size $m$ per iteration,
Algorithm~\ref{alg:stochastic}, as written, uses $3mT$ SFO calls.
Only the $y$ component of the third estimate enters the updates;
if an implementation evaluates only that component, we still
charge one full-gradient SFO call per sample.
Thus $3mT$ is also a valid upper bound under this convention.

\paragraph{Filtration and conditional errors.}
Let $\mathcal F_t$ be the sigma-algebra generated by all oracle
samples drawn before iteration $t$, together with the deterministic
initialization.
Define
\[
\mathcal F_t^{(1)}
:=\sigma(\mathcal F_t,\Omega_t^{(0)}),
\qquad
\mathcal F_t^{(2)}
:=\sigma(\mathcal F_t^{(1)},\Omega_t^{(1)}),
\qquad
\mathcal F_{t+1}
:=\sigma(\mathcal F_t^{(2)},\Omega_t^{(2)}).
\]
The state $(x_t,y_t,z_t,\xi_t,n_t,v_t)$ is
$\mathcal F_t$-measurable.
The predictor $(\widetilde x_t,\widetilde y_t)$ is
$\mathcal F_t^{(1)}$-measurable, and the corrected quantities
$x_{t+1},y_{t+1},\xi_{t+1},n_{t+1}$ and $\bar v_t$ are
$\mathcal F_t^{(2)}$-measurable.
The center $z_{t+1}$ is also
$\mathcal F_t^{(2)}$-measurable, because its update does not
depend on the third batch.

Define the batch errors by
\begin{equation}\label{eq:noise}
	\begin{aligned}
		a_t
		&:=\widehat g_t^{(0)}-\nabla F(x_t,y_t),\\
		b_t
		&:=\widehat g_t^{(1)}
		-\nabla F(\widetilde x_t,\widetilde y_t),\\
		c_t
		&:=\widehat g_t^{(2)}
		-\nabla F(x_{t+1},y_{t+1}).
	\end{aligned}
\end{equation}
Equivalently, the stochastic estimates admit the decompositions
\begin{equation}\label{eq:sample-batch-identities}
	\begin{aligned}
		\widehat g_t^{(0)}
		&=\nabla F(x_t,y_t)+a_t,\\
		\widehat g_t^{(1)}
		&=\nabla F(\widetilde x_t,\widetilde y_t)+b_t,\\
		\widehat g_t^{(2)}
		&=\nabla F(x_{t+1},y_{t+1})+c_t.
	\end{aligned}
\end{equation}
Applying~\eqref{eq:batch} at the three successive query points
gives
\begin{equation}\label{eq:conditional-noise}
	\begin{gathered}
		\E[a_t\mid\mathcal F_t]=0,
		\qquad
		\E[b_t\mid\mathcal F_t^{(1)}]=0,
		\qquad
		\E[c_t\mid\mathcal F_t^{(2)}]=0,\\
		\E[\norm{a_t}^2\mid\mathcal F_t]
		\le\frac{\sigma^2}{m},
		\qquad
		\E[\norm{b_t}^2\mid\mathcal F_t^{(1)}]
		\le\frac{\sigma^2}{m},
		\qquad
		\E[\norm{c_t}^2\mid\mathcal F_t^{(2)}]
		\le\frac{\sigma^2}{m}.
	\end{gathered}
\end{equation}
Writing $c_t=(c_{x,t},c_{y,t})$, the endpoint momentum update
therefore takes the form
\[
v_{t+1}
=\alpha v_t
+k\bigl(
\nabla_y F(x_{t+1},y_{t+1})
-\tau y_{t+1}-n_{t+1}
\bigr)
+k c_{y,t},
\qquad
\E[c_{y,t}\mid\mathcal F_t^{(2)}]=0.
\]
This identity states precisely the conditional centering supplied
by the third batch.
The errors $a_t$, $b_t$, and $c_t$ need not be mutually
independent, since their query points depend on preceding
batches; the analysis uses their conditional moment properties
in~\eqref{eq:conditional-noise}.

The deterministic saddle-point, envelope, sensitivity, and
normal-cone arguments below concern the population objective.
For the stochastic updates, we first substitute
\eqref{eq:sample-batch-identities} into the algorithm and derive
inequalities for the realized iterates and errors.
We then take conditional expectations with respect to the
appropriate sigma-algebras.
In particular, conditional mean-zero identities are used only
when the other factors are measurable with respect to the
corresponding pre-query information.

\begin{lemma}\label{lem:feasibility}
	Suppose that Assumptions~\ref{ass:model}
	and~\ref{ass:oracle} hold, and let the iterates be generated
	by Algorithm~\ref{alg:stochastic}.
	Then, almost surely, for every $t=0,\ldots,T$,
	\begin{equation}\label{eq:normal-membership}
		x_t,z_t\in X,
		\qquad
		y_t\in Y,
		\qquad
		\xi_t\in N_X(x_t),
		\qquad
		n_t\in N_Y(y_t).
	\end{equation}
	The predictors satisfy
	$(\widetilde x_t,\widetilde y_t)\in X\times Y$
	for $t=0,\ldots,T-1$.
	At every finite iteration, all state components, predictors,
	intermediate momentum vectors, and batch gradient estimates
	have finite second moments.
\end{lemma}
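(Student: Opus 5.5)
We argue by induction on $t$, treating the feasibility and normal-cone claims separately from the moment claims. At $t=0$ we have $x_0=z_0\in X$ and $y_0\in Y$ by the input, and $\xi_0=0\in N_X(x_0)$, $n_0=0\in N_Y(y_0)$ because the zero vector lies in every normal cone at a point of the set. For the inductive step, $\widetilde x_t,x_{t+1}\in X$ and $\widetilde y_t,y_{t+1}\in Y$ hold trivially, since they are outputs of $\Pi_X$ and $\Pi_Y$. The center update $z_{t+1}=(1-\beta)z_t+\beta x_{t+1}$ with $0<\beta\le1$ is a convex combination of points of $X$, so $z_{t+1}\in X$ by convexity.

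The normal-cone memberships come from the characterization \eqref{eq:projection}. Write $w=x_t-h(\widehat g_{x,t}^{(1)}+2L(\widetilde x_t-z_t))$, so that $x_{t+1}=\Pi_X(w)$. Then $w-x_{t+1}\in N_X(x_{t+1})$. Since $\xi_{t+1}=(w-x_{t+1})/h$ and $h>0$, and normal cones are cones, we get $\xi_{t+1}\in N_X(x_{t+1})$.

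The dual case needs the factor $1+k$. With $w'=y_t+h(\widehat g_{y,t}^{(1)}-\tau\widetilde y_t+\bar v_t)$ we have $y_{t+1}=\Pi_Y(w')$, and
\[
\frac{y_t-y_{t+1}}{h}+\widehat g_{y,t}^{(1)}-\tau\widetilde y_t+\bar v_t=\frac{w'-y_{t+1}}{h}.
\]
Hence $n_{t+1}=\frac{1}{h(1+k)}(w'-y_{t+1})$. This is a positive multiple (since $k\ge0$) of an element of $N_Y(y_{t+1})$, so $n_{t+1}\in N_Y(y_{t+1})$. The ``almost surely'' qualifier only reflects that the sample gradients are defined for almost every $\omega$; on that full-measure event all these identities hold pathwise.

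The main obstacle is the finite-second-moment claim, because $X$ may be unbounded and the sample gradients are controlled only through \eqref{eq:batch}. We handle it with a second induction on the ordered sequence of quantities computed within an iteration.

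1. \emph{Gradient estimates.} Suppose the query point $u$ is square integrable. From \eqref{eq:batch}, $\E\norm{\widehat\nabla F(u;\Omega)}^2\le 2\E\norm{\nabla F(u)}^2+2\sigma^2/m$. The $L$-Lipschitz property \eqref{eq:smooth} gives $\norm{\nabla F(u)}\le\norm{\nabla F(x_0,y_0)}+L\norm{u-(x_0,y_0)}$, so $\nabla F(u)$ has a finite second moment and therefore so does the estimate.

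2. \emph{Projections.} Nonexpansiveness gives $\norm{\Pi_C(w)-\Pi_C(c_0)}\le\norm{w-c_0}$ for a fixed $c_0\in C$, so square-integrable arguments produce square-integrable projections. The $y$-components are in any case bounded by $D_Y$.

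3. \emph{Remaining updates.} Every other update ($\bar v_t$, $\xi_{t+1}$, $n_{t+1}$, $v_{t+1}$, $z_{t+1}$) is a finite linear combination, with deterministic coefficients, of square-integrable quantities already obtained. By $\norm{\sum_i a_i}^2\le r\sum_i\norm{a_i}^2$ for $r$ summands, each of these is square integrable.

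Processing the quantities in the order in which Algorithm~\ref{alg:stochastic} computes them, starting from the deterministic initialization, propagates square integrability through every finite iteration $t\le T$. Measurability of each quantity with respect to the stated sigma-algebras follows from the same ordering.
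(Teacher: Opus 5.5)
Your proposal is correct and follows essentially the same argument as the paper. Feasibility comes from the projections and the convexity of $X$. Normal-cone membership comes from the projection optimality condition rescaled by the positive factors $h$ and $h(1+k)$. Square integrability is propagated along the computation order using the Lipschitz growth of $\nabla F$, the batch variance bound, nonexpansiveness of projections, and the affine form of the remaining updates. The only cosmetic difference is that you bound the estimator's second moment by $2\E\norm{\nabla F(u)}^2+2\sigma^2/m$. The paper instead uses the sharper conditional bound $\norm{\nabla F(u)}^2+\sigma^2/m$ from the bias--variance identity, and the difference is immaterial here.
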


\begin{proof}
	The initialization is feasible, and the zero vector belongs
	to the normal cone at every point of a nonempty closed convex
	set.
	The projection steps ensure that the predictors and corrected
	iterates are feasible.
	Since $0<\beta\le1$,
	\[
	z_{t+1}=(1-\beta)z_t+\beta x_{t+1}\in X
	\]
	whenever $z_t,x_{t+1}\in X$.
	
	By the projection optimality condition
	in~\eqref{eq:projection}, the primal correction satisfies
	\[
	x_t-h\bigl(
	\widehat g_{x,t}^{(1)}
	+2L(\widetilde x_t-z_t)
	\bigr)-x_{t+1}
	=h\xi_{t+1}
	\in N_X(x_{t+1}).
	\]
	Similarly, the dual correction gives
	\[
	y_t+h\bigl(
	\widehat g_{y,t}^{(1)}
	-\tau\widetilde y_t+\bar v_t
	\bigr)-y_{t+1}
	=h(1+k)n_{t+1}
	\in N_Y(y_{t+1}).
	\]
	Normal cones are cones, and $h>0$ and $1+k>0$.
	Dividing by these positive scalars proves
	\eqref{eq:normal-membership} by induction.
	
	To establish the second-moment claim, fix
	$u_{\rm ref}\in X\times Y$.
	The Lipschitz bound~\eqref{eq:smooth} implies
	\[
	\norm{\nabla F(u)}^2
	\le
	2\norm{\nabla F(u_{\rm ref})}^2
	+2L^2\norm{u-u_{\rm ref}}^2,
	\qquad u\in X\times Y.
	\]
	For any square-integrable feasible query point $u$,
	conditional unbiasedness and~\eqref{eq:batch} yield
	\[
	\E\!\left[
	\norm{\widehat{\nabla}F(u;\Omega)}^2
	\,\middle|\,\mathcal A
	\right]
	\le\norm{\nabla F(u)}^2+\frac{\sigma^2}{m}.
	\]
	Hence the corresponding batch estimate has a finite second
	moment.
	
	Moreover, for any nonempty closed convex set $C$ and any
	fixed $u_C\in C$, nonexpansiveness gives
	\[
	\norm{\Pi_C(w)-u_C}
	=\norm{\Pi_C(w)-\Pi_C(u_C)}
	\le\norm{w-u_C}.
	\]
	Thus projection preserves finite second moments.
	Starting from the deterministic initialization, apply these
	observations successively to the first batch and predictor,
	the second batch and correction, and the third batch and
	momentum update.
	All remaining updates are affine combinations with fixed
	finite coefficients.
	Induction establishes the claimed finite second moments at
	every finite iteration.
\end{proof}

	\subsection{Convergence analysis}\label{fc:section}
	We now introduce the regularized saddle point attached to a fixed center and
	the potential that will telescope across iterations. The first three results
	establish sensitivity, nonnegativity, and an accuracy-independent initial
	budget; none of them uses stochastic independence.
	
	For a fixed center $z\in\R^n$, define
	\begin{equation}\label{eq:envelope}
		\begin{gathered}
			p(z):=\min_{x\in X}\max_{y\in Y}G(x,y;z),\\
			(x^\star(z),y^\star(z)):=\text{the unique saddle point of $G(\cdot,\cdot;z)$}.
		\end{gathered}
	\end{equation}
	Existence follows from strong convexity and coercivity in $x$, compactness of $Y$,
	and the convex--concave minimax theorem. Strong convexity and strong concavity give uniqueness.
	More explicitly, for fixed $y\in Y$, the function $G(x,y;z)$ has a quadratic lower bound.
	Thus $\max_yG(x,y;z)$ has bounded sublevel sets and attains its minimum.
	The convex--concave minimax theorem can be applied on a compact convex subset
	containing all relevant minimizers; coercivity then recovers the problem over $X$.
	
	\begin{lemma}\label{lem:sensitivity}
		Suppose that Assumption~\ref{ass:model} holds and $0<\tau\le L$. Then, for
		all $z,z'\in\R^n$,
		\begin{equation}\label{eq:sensitivity}
			\norm{x^\star(z')-x^\star(z)}\le2\norm{z'-z},\qquad
			\norm{y^\star(z')-y^\star(z)}\le\sqrt{\frac{2L}{\tau}}\norm{z'-z},
		\end{equation}
		and
		\begin{equation}\label{eq:p-gradient}
			\nabla p(z)=2L(z-x^\star(z)),\qquad
			\norm{\nabla p(z')-\nabla p(z)}\le6L\norm{z'-z}.
		\end{equation}
	\end{lemma}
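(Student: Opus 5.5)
The plan is to obtain the sensitivity bounds from the variational-inequality characterization of the two saddle points, using strong monotonicity of the saddle operator of $G$. The gradient formula then follows by recognizing $p$ as a Moreau envelope, and the Lipschitz bound by combining that formula with the $x$-sensitivity.

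\emph{Step 1 (strong monotonicity).} Since $F(\cdot,y)$ is $L$-weakly convex and $F(x,\cdot)$ is concave, the function
\[
H(x,y):=G(x,y;z)-\tfrac{L}{2}\norm{x-z}^2+\tfrac{\tau}{2}\norm{y}^2=F(x,y)+\tfrac L2\norm{x-z}^2
\]
is convex--concave. Its saddle operator $(\nabla_xH,-\nabla_yH)$ is therefore monotone; convexity in $x$ and concavity in $y$ on the convex set $X\times Y$ suffice for this. Adding back the quadratics, the operator $\Phi_z(x,y):=(\nabla_xG(x,y;z),-\nabla_yG(x,y;z))$ satisfies
\[
\ip{\Phi_z(u')-\Phi_z(u)}{u'-u}\ge L\norm{x'-x}^2+\tau\norm{y'-y}^2 .
\]

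\emph{Step 2 (sensitivity).} Write $u=(x^\star(z),y^\star(z))$ and $u'=(x^\star(z'),y^\star(z'))$. The saddle-point conditions are $\ip{\Phi_z(u)}{w-u}\ge0$ and $\ip{\Phi_{z'}(u')}{w-u'}\ge0$ for all $w\in X\times Y$. Testing the first with $w=u'$ and the second with $w=u$, and adding, gives $\ip{\Phi_{z'}(u')-\Phi_z(u)}{u'-u}\le0$. Since $\Phi_{z'}(u')=\Phi_z(u')+(2L(z-z'),0)$, Step 1 yields
\[
L\norm{\Delta x}^2+\tau\norm{\Delta y}^2\le 2L\ip{z'-z}{\Delta x}\le 2L\norm{\Delta z}\norm{\Delta x}.
\]
Dropping the $y$ term gives $\norm{\Delta x}\le 2\norm{\Delta z}$. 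Alternatively, $2ab-a^2\le b^2$ gives $\tau\norm{\Delta y}^2\le L\norm{\Delta z}^2$. This is even sharper than the stated $\sqrt{2L/\tau}$ bound, so \eqref{eq:sensitivity} follows.

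\emph{Step 3 (gradient of $p$).} Let $\phi_\tau(x):=\max_{y\in Y}\{F(x,y)-\frac\tau2\norm y^2\}$. As a pointwise maximum of $L$-weakly convex functions, $\phi_\tau$ is $L$-weakly convex and continuous on $X$. By the minimax theorem already invoked for existence of the saddle point,
\[
p(z)=\min_{x\in X}\{\phi_\tau(x)+L\norm{x-z}^2\},
\]
and the $x$-component of the saddle point is this minimizer, so it equals $x^\star(z)$. Hence $p$ is the Moreau envelope with parameter $1/(2L)<1/L$ of $\phi_\tau$ restricted to $X$. The same Moreau-envelope theorem used for \eqref{eq:os-original-gradient} gives $\nabla p(z)=2L(z-x^\star(z))$.

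\emph{Step 4 (Lipschitz bound).} Combining the gradient formula with Step 2,
\[
\norm{\nabla p(z')-\nabla p(z)}\le 2L\bigl(\norm{z'-z}+\norm{x^\star(z')-x^\star(z)}\bigr)\le 6L\norm{z'-z}.
\]

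The main obstacle is Step 3. One must verify that the saddle $x$-component coincides with the prox point of $\phi_\tau$ and that the weak-convexity envelope theorem applies to the constrained function. Both follow from the minimax equality and the uniqueness established before the lemma. Note also that Step 2 requires centers $z,z'\in\R^n$ only through the linear term, so no feasibility of $z$ is needed.
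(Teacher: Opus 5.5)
Your proof is correct and follows essentially the same route as the paper. Both arguments use strong monotonicity of the regularized saddle operator, add the two saddle-point variational inequalities to obtain $L\norm{\Delta x}^2+\tau\norm{\Delta y}^2\le 2L\ip{z'-z}{\Delta x}$, identify $p$ as the $1/(2L)$-Moreau envelope of $\phi_\tau$ over $X$, and combine the gradient formula with the $x$-sensitivity to get the constant $6L$. The one difference is cosmetic: you bound the cross term via $2ab-a^2\le b^2$ instead of the paper's Young split $\frac L2\norm{\Delta x}^2+2L\norm{z'-z}^2$, which gives you the slightly sharper dual constant $\sqrt{L/\tau}$.
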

	\begin{proof}
		Define $\mathcal T(x,y)=(\nabla_x F(x,y)+2Lx,-\nabla_y F(x,y)+\tau y)$.
		Adding the two pairs of first-order strong convexity and strong concavity inequalities for $G$ gives
		\[
		\ip{\mathcal T(x',y')-\mathcal T(x,y)}{(x'-x,y'-y)}
		\ge L\norm{x'-x}^2+\tau\norm{y'-y}^2.
		\]
		Add the variational inequalities at the two saddle points and write
		$\delta_x=x^\star(z')-x^\star(z)$ and $\delta_y=y^\star(z')-y^\star(z)$.
		Then
		\[
		L\norm{\delta_x}^2+\tau\norm{\delta_y}^2
		\le2L\ip{z'-z}{\delta_x}
		\le\frac L2\norm{\delta_x}^2+2L\norm{z'-z}^2.
		\]
		Rearranging proves~\eqref{eq:sensitivity}. Let
		$\phi_\tau(x)=\max_{y\in Y}\{F(x,y)-\tau\norm y^2/2\}$.
		Then $p(z)=\min_{x\in X}\{\phi_\tau(x)+L\norm{x-z}^2\}$.
		Uniqueness of the minimizer and envelope differentiation give
		$\nabla p(z)=2L(z-x^\star(z))$.
		Together with~\eqref{eq:sensitivity}, this yields the Lipschitz bound $2L(1+2)=6L$.
	\end{proof}
	
	The potential used in the remaining concave analysis employs the shorthand
	\begin{equation}\label{eq:s-choice}
		s:=\sqrt{2L\tau}.
	\end{equation}
	Write $w_t=(x_t,y_t,\xi_t,n_t,v_t)$.
	For $w=(x,y,\xi,n,v)$ with $\xi\in N_X(x),n\in N_Y(y)$, define
	\begin{equation}\label{eq:energy}
		\begin{aligned}
			E_z(w):={}&-\left(1-\frac{s}{16L}\right)[G(x,y;z)-p(z)]
			+\frac1L\norm{\nabla_xG(x,y;z)+\xi}^2\\
			&+\frac1L\norm{-\nabla_yG(x,y;z)+n-v}^2
			+\frac{s}{16L}\ip v{y-y^\star(z)}
			+\frac\tau{256}\norm{y-y^\star(z)}^2,\\
			H_z(w):={}&E_z(w)+\frac{s}{256L^2}\norm v^2,\\
			\mathcal V(z,w):={}&p(z)-\inf_{z'\in\R^n}p(z')+H_z(w),\qquad
			\mathcal V_t:=\mathcal V(z_t,w_t).
		\end{aligned}
	\end{equation}
	
	\begin{lemma}\label{lem:positive}
		Suppose that Assumption~\ref{ass:model} holds and $0<\tau\le L$. For any
		state as above, let $P=\nabla_xG(x,y;z)+\xi$ and
		$Q=-\nabla_yG(x,y;z)+n-v$. Then
		\begin{equation}\label{eq:saddle-growth}
			\ip{-\nabla_yG(x,y;z)+n}{y-y^\star(z)}+G(x,y;z)-p(z)
			\ge\frac L2\norm{x-x^\star(z)}^2+\frac\tau2\norm{y-y^\star(z)}^2,
		\end{equation}
		and
		\begin{equation}\label{eq:potential-lower}
			\begin{aligned}
				E_z(w)\ge{}&\frac1{2L}\norm P^2
				+\frac{s}{32}\norm{x-x^\star(z)}^2
				+\frac\tau2\norm{y-y^\star(z)}^2+\frac1L\left\|Q-\frac{s}{32}(y-y^\star(z))\right\|^2\ge0.
			\end{aligned}
		\end{equation}
		Consequently, $\mathcal V(z,w)\ge0$.
	\end{lemma}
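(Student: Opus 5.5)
The plan is to derive two elementary upper and lower bounds on the gap $G(x,y;z)-p(z)$ from strong convexity/concavity and the normal-cone inclusions. I will then split the coefficient $-(1-c)$ with $c:=s/(16L)$ as $-(1-c)=-1+c$ and apply a different bound to each part. The cross term $c\ip v{y-y^\star(z)}$ in $E_z$ should cancel exactly, and the $\frac1L\norm Q^2$ term should absorb the remaining inner product by completing a square. Throughout write $(x^\star,y^\star)=(x^\star(z),y^\star(z))$, so that $p(z)=G(x^\star,y^\star;z)$. Note that $c\le\sqrt{2L\cdot L}/(16L)=\sqrt2/16<1$ because $\tau\le L$.

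\emph{Step 1: proof of \eqref{eq:saddle-growth}.} By $\tau$-strong concavity of $G(x,\cdot;z)$,
\[
G(x,y^\star;z)\le G(x,y;z)+\ip{\nabla_yG(x,y;z)}{y^\star-y}-\tfrac\tau2\norm{y-y^\star}^2 .
\]
Since $x^\star$ minimizes the $L$-strongly convex function $G(\cdot,y^\star;z)$ over $X$, we have $G(x,y^\star;z)-p(z)\ge\frac L2\norm{x-x^\star}^2$. Combining the two gives $G-p\ge\frac L2\norm{x-x^\star}^2+\frac\tau2\norm{y-y^\star}^2-\ip{-\nabla_yG}{y-y^\star}$. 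Finally, $n\in N_Y(y)$ and $y^\star\in Y$ give $\ip n{y-y^\star}\ge0$, and adding this yields \eqref{eq:saddle-growth}.

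\emph{Step 2: an upper bound on $G-p$.} Since $y^\star$ maximizes the $\tau$-strongly concave function $G(x^\star,\cdot;z)$ over $Y$, we get $p(z)\ge G(x^\star,y;z)+\frac\tau2\norm{y-y^\star}^2$. Strong convexity in $x$ gives
\[
G(x,y;z)-G(x^\star,y;z)\le\ip{\nabla_xG(x,y;z)}{x-x^\star}-\tfrac L2\norm{x-x^\star}^2 .
\]
Also $\xi\in N_X(x)$ gives $\ip\xi{x-x^\star}\ge0$. Hence
\[
-(G-p)\ge-\ip P{x-x^\star}+\tfrac L2\norm{x-x^\star}^2+\tfrac\tau2\norm{y-y^\star}^2
\ge-\tfrac1{2L}\norm P^2+\tfrac\tau2\norm{y-y^\star}^2 ,
\]
where the last step uses Young's inequality.

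\emph{Step 3: assembly.} Apply Step 2 to the term $-(G-p)$. Apply \eqref{eq:saddle-growth} to $c(G-p)$, after writing $-\nabla_yG+n=Q+v$:
\[
c(G-p)\ge \tfrac{cL}2\norm{x-x^\star}^2+\tfrac{c\tau}2\norm{y-y^\star}^2-c\ip Q{y-y^\star}-c\ip v{y-y^\star}.
\]
The last term cancels $+c\ip v{y-y^\star}$ in $E_z$, and $cL/2=s/32$ supplies the $x$-term. With $c'=s/32$ we have $c=2c'/L$, so
\[
\tfrac1L\norm Q^2-c\ip Q{y-y^\star}=\tfrac1L\norm{Q-c'(y-y^\star)}^2-\tfrac{c'^2}{L}\norm{y-y^\star}^2 .
\]
Here $c'^2/L=2L\tau/(1024L)=\tau/512$. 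Collecting terms, the coefficient of $\norm P^2$ is $\frac1L-\frac1{2L}=\frac1{2L}$. The coefficient of $\norm{y-y^\star}^2$ is $\frac\tau2+\frac{c\tau}2+\frac\tau{256}-\frac\tau{512}\ge\frac\tau2$. This proves \eqref{eq:potential-lower}.

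\emph{Step 4: nonnegativity of $\mathcal V$.} We have $p(z)-\inf p\ge0$, and $H_z=E_z+\frac{s}{256L^2}\norm v^2\ge E_z\ge0$, so $\mathcal V(z,w)\ge0$.

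The main obstacle is Step 2. The naive bound $p\ge G(x^\star,y)$ loses all $y$-control; strong concavity at the maximizer $y^\star$ must be used to recover the $\frac\tau2\norm{y-y^\star}^2$ term. The other delicate point is the bookkeeping in Step 3, where the choice $c=s/(16L)=2c'/L$ has to match so that the cross terms cancel and the square completes exactly.
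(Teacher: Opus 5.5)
Your proposal is correct and is essentially the paper's proof. The paper completes the square first and then applies the two growth inequalities together with $\ip{n}{y-y^\star(z)}\ge0$ to the $\frac{s}{16L}(G-p)$ part, which is exactly your use of \eqref{eq:saddle-growth}; its bound $\frac{1}{2L}\norm P^2-(G-p)\ge\frac\tau2\norm{y-y^\star(z)}^2$ is your Step~2, argued for a general $a\in X$ instead of $x^\star(z)$. The only thing the paper adds is the check $\inf_{z'}p(z')=\inf_{x\in X}\phi_\tau(x)\ge\phi_{\inf}-\tau D_Y^2/2>-\infty$, which you omit and which ensures that $\mathcal V$ is finite, not merely nonnegative.
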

	\begin{proof}
		Saddle-point optimality, strong convexity, and strong concavity give
		\begin{align}
			G(x,y^\star(z);z)-p(z)&\ge\frac L2\norm{x-x^\star(z)}^2,\label{eq:growth-x}\\
			G(x,y^\star(z);z)&\le G(x,y;z)
			-\ip{\nabla_yG(x,y;z)}{y-y^\star(z)}
			-\frac\tau2\norm{y-y^\star(z)}^2.\label{eq:growth-y}
		\end{align}
		Adding these inequalities and using $\ip n{y-y^\star(z)}\ge0$ proves~\eqref{eq:saddle-growth}.
		Write $Y_0=y-y^\star(z)$. Completing the square in~\eqref{eq:energy} yields
		\begin{align*}
			E_z(w)={}&-\left(1-\frac{s}{16L}\right)(G-p)+\frac1L\norm P^2
			+\frac{s}{16L}\ip{-\nabla_yG+n}{Y_0}\\
			&+\frac1L\norm{Q-sY_0/32}^2+\frac\tau{512}\norm{Y_0}^2\\
			\ge{}&-(G-p)+\frac{s}{16L}[G(x,y^\star(z);z)-p(z)]
			+\frac1L\norm P^2+\frac1L\norm{Q-sY_0/32}^2.
		\end{align*}
		The last line follows from~\eqref{eq:growth-y} after discarding nonnegative terms.
		For every $a\in X$, strong convexity and the normal-cone condition also give
		\[
		G(a,y;z)\ge G(x,y;z)+\ip P{a-x}+\frac L2\norm{a-x}^2
		\ge G(x,y;z)-\frac{\norm P^2}{2L}.
		\]
		Since $\min_aG(a,y;z)\le G(x^\star(z),y;z)\le p(z)-\tau\norm{Y_0}^2/2$, we obtain
		\[
		\frac{\norm P^2}{2L}-(G(x,y;z)-p(z))\ge\frac\tau2\norm{Y_0}^2.
		\]
		Substituting this inequality and~\eqref{eq:growth-x} into the completed-square
		lower bound proves~\eqref{eq:potential-lower}.
		Moreover, $\norm y\le D_Y$ implies
		$\inf_zp(z)=\inf_x\phi_\tau(x)\ge\phi_{\inf}-\tau D_Y^2/2>-\infty$.
		Thus $E_z$, $H_z$, and $\mathcal V$ in~\eqref{eq:energy} are all nonnegative.
	\end{proof}
	
	Define the initial quantity, independent of $\tau,m,T,\eps$, by
	\begin{equation}\label{eq:B}
		\begin{aligned}
			B:={}&\phi(x_0)-\phi_{\inf}+2D_Y\norm{\nabla_y F(x_0,y_0)}+\frac{\norm{\nabla_x F(x_0,y_0)}^2+2\norm{\nabla_y F(x_0,y_0)}^2}{L}
			+\left(3+\frac1{64}\right)LD_Y^2.
		\end{aligned}
	\end{equation}
	\begin{lemma}\label{lem:initial}
		Suppose that Assumption~\ref{ass:model} holds and $0<\tau\le L$. Then the
		initialization in Algorithm~\ref{alg:stochastic} satisfies $0\le\mathcal V_0\le B$.
	\end{lemma}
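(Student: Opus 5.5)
Nonnegativity $\mathcal V_0\ge0$ is immediate from Lemma~\ref{lem:positive}, since $w_0=(x_0,y_0,0,0,0)$ satisfies $\xi_0=0\in N_X(x_0)$ and $n_0=0\in N_Y(y_0)$. For the upper bound I will evaluate every term of $\mathcal V_0=p(z_0)-\inf p+H_{z_0}(w_0)$ at $z_0=x_0$, $\xi_0=n_0=v_0=0$. The terms $\frac{s}{16L}\ip{v}{\cdot}$ and $\frac{s}{256L^2}\norm v^2$ vanish, so
\[
\mathcal V_0=p(x_0)-\inf p-\Bigl(1-\tfrac{s}{16L}\Bigr)[G(x_0,y_0;x_0)-p(x_0)]+\tfrac1L\norm{\nabla_x F(x_0,y_0)}^2+\tfrac1L\norm{\nabla_yF(x_0,y_0)-\tau y_0}^2+\tfrac{\tau}{256}\norm{y_0-y^\star(x_0)}^2 .
\]
Here I use $\nabla_xG(x_0,y_0;x_0)=\nabla_xF(x_0,y_0)$ because $z_0=x_0$.

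I will bound the pieces separately using $\tau\le L$ and $\norm{y}\le D_Y$ on $Y$. First, $p(x_0)\le \max_yG(x_0,y;x_0)=\phi_\tau(x_0)\le\phi(x_0)$. Second, $\inf p=\inf\phi_\tau\ge\phi_{\inf}-\tau D_Y^2/2$, as in Lemma~\ref{lem:positive}. So the first difference is at most $\phi(x_0)-\phi_{\inf}+LD_Y^2/2$.

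For the gap term, note that $0\le s/(16L)=\sqrt{2\tau/L}/16<1$. So I need an upper bound on $p(x_0)-G(x_0,y_0;x_0)$. This is at most $\phi_\tau(x_0)-G(x_0,y_0;x_0)=\max_y\{G(x_0,y;x_0)-G(x_0,y_0;x_0)\}$. By concavity of $G(x_0,\cdot;x_0)$, this is at most $\max_y\ip{\nabla_yF(x_0,y_0)-\tau y_0}{y-y_0}$, hence at most $2D_Y\norm{\nabla_yF(x_0,y_0)}+2\tau D_Y^2$. If $G(x_0,y_0;x_0)-p(x_0)\ge0$, the term is nonpositive and can be dropped; in either case this bound applies.

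The remaining terms are elementary. One has $\frac1L\norm{\nabla_yF-\tau y_0}^2\le \frac2L\norm{\nabla_yF(x_0,y_0)}^2+2\tau^2D_Y^2/L\le\frac2L\norm{\nabla_yF}^2+2LD_Y^2$. Also $\frac{\tau}{256}\norm{y_0-y^\star}^2\le\frac{4LD_Y^2}{256}=\frac{L}{64}D_Y^2$.

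Summing everything gives $\phi(x_0)-\phi_{\inf}+2D_Y\norm{\nabla_yF}+(\norm{\nabla_xF}^2+2\norm{\nabla_yF}^2)/L$ plus a multiple of $LD_Y^2$. The only delicate step is making the $LD_Y^2$ coefficients fit within $3+1/64$. The contributions are $1/2$ (envelope shift), $2$ (gap), $2$ (dual residual), and $1/64$, which sum to $4.5+1/64$ and exceed the budget. I therefore expect to tighten two estimates.

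The first tightening is to pair the shift with the gap. Since $G(x_0,y_0;x_0)\ge F(x_0,y_0)-\tau D_Y^2/2$, I would instead bound $p(x_0)-G(x_0,y_0;x_0)\le\phi(x_0)-F(x_0,y_0)+\tau D_Y^2/2$. I would also use $p(x_0)\le\phi(x_0)$ and control $\phi(x_0)-F(x_0,y_0)$ by concavity of $F$: $\phi(x_0)-F(x_0,y_0)\le 2D_Y\norm{\nabla_yF(x_0,y_0)}$. This gives a gap contribution of only $\tfrac12 LD_Y^2$ with no $2\tau D_Y^2$.

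The second tightening handles the cross term $-2\tau\ip{\nabla_yF}{y_0}/L$ in the dual residual. I would expand it rather than absorb it via $\norm{a-b}^2\le2\norm a^2+2\norm b^2$, so that $\frac1L\norm{\nabla_yF-\tau y_0}^2\le\frac1L\norm{\nabla_yF}^2+2D_Y\norm{\nabla_yF}\cdot\frac{\tau}{L}+LD_Y^2$. Alternatively, I would keep the coarse bound $2/L$ and $2LD_Y^2$.

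With these tightenings the total is $\tfrac12+\tfrac12+2+\tfrac1{64}=3+\tfrac1{64}$, matching $B$. This constant bookkeeping is the only real obstacle.
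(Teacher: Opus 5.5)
Your argument is correct and, once you apply your first tightening, it coincides with the paper's proof. The paper likewise bounds $p(x_0)-\inf_z p(z)\le\phi(x_0)-\phi_{\inf}+\tau D_Y^2/2$. It bounds the bracket $p(x_0)-F(x_0,y_0)+\frac\tau2\norm{y_0}^2$ by the nonnegative quantity $2D_Y\norm{\nabla_yF(x_0,y_0)}+\tau D_Y^2/2$ via concavity, using $0<1-s/(16L)<1$. It uses the coarse estimate $\norm{-\nabla_yF+\tau y_0}^2\le2\norm{\nabla_yF}^2+2\tau^2D_Y^2$ and bounds the distance term by $\tau D_Y^2/64$, for a total of $(\tau+2\tau^2/L+\tau/64)D_Y^2\le(3+1/64)LD_Y^2$.

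Your ``second tightening'' is unnecessary, and its expanded form would actually fail. It produces an extra term $2\tau D_Y\norm{\nabla_yF(x_0,y_0)}/L$ that cannot be absorbed, because the gap term already uses up the whole coefficient $2D_Y\norm{\nabla_yF(x_0,y_0)}$ in $B$. Keep the coarse bound, as you do in your final tally.
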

	\begin{proof}
		Using $0\le\phi(x)-\phi_\tau(x)\le\tau D_Y^2/2$ and $z_0=x_0$, we have
		\[
		p(x_0)-\inf_zp(z)\le\phi(x_0)-\phi_{\inf}+\frac\tau2D_Y^2.
		\]
		Substituting $\xi_0=n_0=v_0=0$ into~\eqref{eq:energy} gives
		\begin{align*}
			E_{x_0}(w_0)={}&\left(1-\frac{s}{16L}\right)
			\left[p(x_0)-F(x_0,y_0)+\frac\tau2\norm{y_0}^2\right]\\
			&+\frac{\norm{\nabla_x F(x_0,y_0)}^2+
				\norm{-\nabla_y F(x_0,y_0)+\tau y_0}^2}{L}
			+\frac\tau{256}\norm{y_0-y^\star(x_0)}^2.
		\end{align*}
		By concavity and the radius bound on $Y$, the bracketed expression is at most
		\[
		2D_Y\norm{\nabla_y F(x_0,y_0)}+\tau D_Y^2/2.
		\]
		Replace the bracket by this nonnegative upper bound and use
		$0<1-s/(16L)<1$ together with
		\[
		\norm{-\nabla_y F(x_0,y_0)+\tau y_0}^2
		\le2\norm{\nabla_y F(x_0,y_0)}^2+2\tau^2D_Y^2.
		\]
		This gives
		\begin{align*}
			\mathcal V_0\le{}&\phi(x_0)-\phi_{\inf}
			+2D_Y\norm{\nabla_y F(x_0,y_0)}
			+\frac{\norm{\nabla_x F(x_0,y_0)}^2+2\norm{\nabla_y F(x_0,y_0)}^2}{L}\\
			&+\left(\tau+\frac{2\tau^2}{L}+\frac\tau{64}\right)D_Y^2\le B.
		\end{align*}
		The last step uses $\tau\le L$; nonnegativity follows from Lemma~\ref{lem:positive}.
	\end{proof}
	
	The next lemma is the deterministic algebraic core of the analysis. It
	isolates one predictor--corrector step at a fixed center and keeps the two
	gradient errors explicit. In the SPDE proof these errors are the realized
	sample-gradient errors generated by $\Omega_t^{(0)}$ and
	$\Omega_t^{(1)}$; in the VR-SPDE proof they are the errors of the recursive
	sample estimator.
	
	Throughout this subsection, the center $z$ is fixed. All gradients refer to the same function $G(\cdot,\cdot;z)$,
	so neither conditional expectations nor center updates arise here. The next
	lemma makes the concrete step-size and damping choices at their first use.
	Consider two feasible states $w=(x,y,\xi,n,v)$ and
	$w^+=(x^+,y^+,\xi^+,n^+,v^+)$, and assume normal cone membership at both endpoints.
	Specifically, $\xi\in N_X(x)$, $\xi^+\in N_X(x^+)$,
	$n\in N_Y(y)$, and $n^+\in N_Y(y^+)$.
	For compact notation, define
	\begin{align}
		P&=\nabla_xG(x,y;z)+\xi,&
		P^+&=\nabla_xG(x^+,y^+;z)+\xi^+,\nonumber\\
		R&=-\nabla_yG(x,y;z)+n,&
		R^+&=-\nabla_yG(x^+,y^+;z)+n^+,\nonumber\\
		Q&=R-v,&Q^+&=R^+-v^+,\nonumber\\
		X^+&=x^+-x^\star(z),&Y^+&=y^+-y^\star(z).
		\label{fc:local-notation}
	\end{align}
	For any quantity defined at both endpoints, $\Delta$ denotes its endpoint value minus its initial value; for example,
	$\Delta P=P^+-P$ and $\Delta x=x^+-x$. Also set
	\begin{equation}\label{fc:increments}
		\mathcal I=\norm{\Delta P}^2+\norm{\Delta Q}^2,
		\qquad e=(e_x,e_y),\qquad
		\norm e^2=\norm{e_x}^2+\norm{e_y}^2.
	\end{equation}
	We use the following full dissipation associated with the potential function~\eqref{eq:energy}:
	\begin{equation}\label{fc:dissipation}
		\begin{aligned}
			\mathcal D_z(w^+,w)={}&\frac h4\norm{P^+}^2
			+\frac{3hs}{512L}\norm{Q^+}^2
			+\frac{7hs}{4096L}\norm{v^+}^2\\
			&+\frac{h\tau s}{128}\norm{Y^+}^2
			+\frac{hs(16L-s)}{1024}\norm{X^+}^2
			+\frac1{4L}\mathcal I.
		\end{aligned}
	\end{equation}
	Here $v^+$ may be the virtual endpoint momentum constructed below. The argument requires the exact recursion stated next,
	but does not require $e_x,e_y$ to vanish.
	
	\begin{lemma}\label{fc:inexact-dissipation}
		Suppose that Assumption~\ref{ass:model} holds. Let $0<\tau\le L$, let $s$ be given by~\eqref{eq:s-choice}, and set
		\begin{equation}\label{eq:step-damping}
			\begin{gathered}
				L_G:=3L+\tau,\qquad h:=\frac1{64L_G},\qquad
				d:=hL_G=\frac1{64},\\
				\alpha:=\left(1+\frac{hs}{16}\right)^{-1},\qquad
				k:=\frac{h(8L-s)}{16+hs}.
			\end{gathered}
		\end{equation}
		These choices satisfy
		\begin{equation}\label{fc:parameter-identities}
			s\le2L,\qquad hL\le\frac1{192},\qquad
			\alpha^{-1}=1+\frac{hs}{16},\qquad
			\frac{k}{\alpha}=\frac{h(8L-s)}{16},\qquad
			0\le k\le\frac{hL}{2}.
		\end{equation}
		Under the preceding notation, suppose that
		\begin{align}
			\Delta x&=-h(P^++e_x),&
			\Delta y&=-h(Q^++e_y),\label{fc:dynamics}\\
			v^+&=\alpha v+k\bigl(\nabla_yG(x^+,y^+;z)-n^+\bigr),
			\label{fc:momentum-recursion}
		\end{align}
		and that, for some $u\ge0$,
		\begin{equation}\label{fc:inexact-error}
			\norm e\le2d\sqrt{\mathcal I}+u.
		\end{equation}
		Then
		\begin{equation}\label{fc:inexact-conclusion}
			H_z(w^+)-H_z(w)+\frac{hs}{32}H_z(w^+)
			\le-\mathcal D_z(w^+,w)+\frac{17}{L}u^2.
		\end{equation}
	\end{lemma}
	\begin{proof}
		We suppress the fixed-center subscript $z$ below. Set
		$\rho=(16L-s)/32$ and
		$H=G(x,y;z)-p(z)$ and $H^+=G(x^+,y^+;z)-p(z)$.
		Using~\eqref{fc:parameter-identities} and dividing the momentum recursion by $\alpha$ gives
		\begin{align}
			\frac{\Delta v}{h}
			&=-\frac{s}{16}v^+-\frac{8L-s}{16}R^+
			=-\frac L2R^++\frac{s}{16}Q^+.
			\label{fc:momentum-dynamics}
		\end{align}
		
		\medskip\noindent\textbf{Step 1: Endpoint inequalities and a three-component expansion.}
		For any state, write $Y=y-y^\star(z)$ and define three local components
		\begin{equation}\label{fc:components}
			A=-\rho H+\frac12\norm P^2,\qquad
			B=\frac12\norm Q^2,\qquad
			C_0=\frac{s}{32}\ip{v}{Y}+\frac{L\tau}{512}\norm Y^2.
		\end{equation}
		The definition of the potential function gives $A+B+C_0=(L/2)E_z(w)$.
		The kinetic energy is excluded from $C_0$ for now and will be added in the final step.
		
		Since $\xi\in N_X(x)$ and $n^+\in N_Y(y^+)$,
		$\ip\xi{\Delta x}\le0$ and $\ip{n^+}{\Delta y}\ge0$.
		At the mixed point $(x^+,y)$, applying $L$-strong convexity and $\tau$-strong concavity gives
		\begin{align*}
			G(x^+,y;z)&\ge G(x,y;z)+\ip P{\Delta x}
			+\frac L2\norm{\Delta x}^2,\\
			G(x^+,y;z)&\le G(x^+,y^+;z)+\ip{R^+}{\Delta y}
			-\frac\tau2\norm{\Delta y}^2.
		\end{align*}
		Consequently,
		\begin{equation}\label{fc:value-cross}
			H^+-H\ge\ip P{\Delta x}-\ip{R^+}{\Delta y}
			+\frac L2\norm{\Delta x}^2
			+\frac\tau2\norm{\Delta y}^2.
		\end{equation}
		To establish the strong monotonicity used below, write the following inequalities at the four endpoints:
		\begin{align*}
			G(x^+,y;z)-G(x,y;z)
			&\ge\ip{\nabla_xG(x,y;z)}{\Delta x}+\frac L2\norm{\Delta x}^2,\\
			G(x,y^+;z)-G(x^+,y^+;z)
			&\ge-\ip{\nabla_xG(x^+,y^+;z)}{\Delta x}
			+\frac L2\norm{\Delta x}^2,\\
			G(x,y^+;z)-G(x,y;z)
			&\le\ip{\nabla_yG(x,y;z)}{\Delta y}-\frac\tau2\norm{\Delta y}^2,\\
			G(x^+,y;z)-G(x^+,y^+;z)
			&\le-\ip{\nabla_yG(x^+,y^+;z)}{\Delta y}
			-\frac\tau2\norm{\Delta y}^2.
		\end{align*}
		The sum of the first two left-hand sides equals the sum of the last two. Comparing the corresponding right-hand sides and rearranging yields
		\[
		\ip{\Delta(\nabla_xG)}{\Delta x}
		+\ip{\Delta(-\nabla_yG)}{\Delta y}
		\ge L\norm{\Delta x}^2+\tau\norm{\Delta y}^2.
		\]
		The normal cone definition also gives
		$\ip{\Delta\xi}{\Delta x}\ge0$ and $\ip{\Delta n}{\Delta y}\ge0$.
		Adding these inequalities gives
		\begin{equation}\label{fc:monotonicity}
			\ip{\Delta P}{\Delta x}+\ip{\Delta R}{\Delta y}
			\ge L\norm{\Delta x}^2+\tau\norm{\Delta y}^2.
		\end{equation}
		
		For the primal part of $A$, the difference-of-squares identity gives exactly
		\begin{align}
			&\frac12(\norm{P^+}^2-\norm P^2)
			-\rho\ip P{\Delta x}-\frac{L\rho}{2}\norm{\Delta x}^2\nonumber\\
			&\quad=\ip{P^+}{\Delta P}-\rho\ip{P^+}{\Delta x}
			-\frac12\norm{\Delta P-\rho\Delta x}^2
			-\frac{L(128L-\tau)}{1024}\norm{\Delta x}^2.
			\label{fc:primal-square}
		\end{align}
		The last term is nonpositive. Multiplying~\eqref{fc:value-cross} by $-\rho$
		and applying~\eqref{fc:primal-square}, we obtain
		\begin{align}
			\Delta A\le{}&-\rho\ip{P^+}{\Delta x}
			+\rho\ip{R^+}{\Delta y}+\ip{P^+}{\Delta P}\nonumber\\
			&-\frac12\norm{\Delta P-\rho\Delta x}^2
			-\frac{\rho\tau}{2}\norm{\Delta y}^2.
			\label{fc:A-increment}
		\end{align}
		Moreover,
		\begin{equation}\label{fc:B-increment}
			\Delta B=\ip{Q^+}{\Delta Q}-\frac12\norm{\Delta Q}^2.
		\end{equation}
		
		Substituting~\eqref{fc:dynamics} into~\eqref{fc:monotonicity}
		and using $\Delta R=\Delta Q+\Delta v$ yields
		\begin{align}
			\frac1h\bigl(\ip{P^+}{\Delta P}+\ip{Q^+}{\Delta Q}\bigr)
			\le{}&-L\norm{P^++e_x}^2-\tau\norm{Q^++e_y}^2\nonumber\\
			&-\frac1h\bigl(\ip{e_x}{\Delta P}+\ip{e_y}{\Delta Q}\bigr)
			-\frac1h\ip{Q^++e_y}{\Delta v}.
			\label{fc:monotonicity-use}
		\end{align}
		By~\eqref{fc:momentum-dynamics}, the last inner product equals
		\[
		-\frac1h\ip{Q^++e_y}{\Delta v}
		=\frac L2\ip{Q^++e_y}{R^+}
		-\frac{s}{16}\ip{Q^++e_y}{Q^+}.
		\]
		Combining~\eqref{fc:A-increment}--\eqref{fc:monotonicity-use}
		and using $L/2-\rho=s/32$, we obtain the full two-component increment bound
		\begin{align}
			\frac{\Delta A+\Delta B}{h}\le{}&
			\rho\norm{P^+}^2-L\norm{P^++e_x}^2
			-\tau\norm{Q^++e_y}^2+\rho\ip{P^+}{e_x}\nonumber\\
			&+\frac{s}{32}\ip{R^+}{Q^++e_y}
			-\frac{s}{16}\ip{Q^+}{Q^++e_y}\nonumber\\
			&-\frac1h\bigl(\ip{e_x}{\Delta P}+\ip{e_y}{\Delta Q}\bigr)
			-\frac1{2h}\norm{\Delta P-\rho\Delta x}^2\nonumber\\
			&-\frac1{2h}\norm{\Delta Q}^2
			-\frac{\rho\tau}{2h}\norm{\Delta y}^2.
			\label{fc:AB-increment}
		\end{align}
		
		For the third component, start with the two exact identities
		\begin{align*}
			\ip{v^+}{Y^+}-\ip{v}{Y}
			&=\ip{\Delta v}{Y^+}+\ip{v^+}{\Delta y}
			-\ip{\Delta v}{\Delta y},\\
			\norm{Y^+}^2-\norm Y^2
			&=2\ip{Y^+}{\Delta y}-\norm{\Delta y}^2.
		\end{align*}
		Substituting~\eqref{fc:dynamics} and~\eqref{fc:momentum-dynamics}, and then using
		$v^+=R^+-Q^+$ and $s^2/512=L\tau/256$, gives the termwise expansion
		\begin{align}
			\frac{\Delta C_0}{h}={}&
			-\frac{s}{32}\ip{R^+}{Q^++e_y}
			+\frac{s}{32}\norm{Q^+}^2+\frac{s}{32}\ip{Q^+}{e_y}\nonumber\\
			&-\frac{Ls}{64}\ip{R^+}{Y^+}
			-\frac{L\tau}{256}\ip{Y^+}{e_y}
			-\frac{s}{32h}\ip{\Delta v}{\Delta y}
			-\frac{L\tau}{512h}\norm{\Delta y}^2.
			\label{fc:C-increment}
		\end{align}
		The two $\ip{Q^+}{Y^+}$ terms cancel because $s^2/512=L\tau/256$.
		In~\eqref{fc:AB-increment} and~\eqref{fc:C-increment},
		the $\ip{R^+}{Q^++e_y}$ terms also cancel exactly.
		
		\medskip\noindent\textbf{Step 2: Add the endpoint weight and bound the current-point terms.}
		The endpoint weight expands as
		\begin{align}
			\frac{Ls}{64}E_z(w^+)={}&-\frac{s(16L-s)}{1024}H^+
			+\frac{s}{64}(\norm{P^+}^2+\norm{Q^+}^2)\nonumber\\
			&+\frac{L\tau}{512}\ip{v^+}{Y^+}
			+\frac{L\tau s}{16384}\norm{Y^+}^2.
			\label{fc:endpoint-energy}
		\end{align}
		After writing $v^+=R^+-Q^+$, the coefficient of $\ip{R^+}{Y^+}$ is
		\[
		-\frac{Ls}{64}+\frac{L\tau}{512}
		=-\frac{s(16L-s)}{1024}.
		\]
		The saddle-point growth inequality~\eqref{eq:saddle-growth} therefore gives
		\begin{align}
			&-\frac{s(16L-s)}{1024}\bigl(H^++\ip{R^+}{Y^+}\bigr)\nonumber\\
			&\qquad\le-\frac{Ls(16L-s)}{2048}\norm{X^+}^2
			-\frac{\tau s(16L-s)}{2048}\norm{Y^+}^2.
			\label{fc:growth-use}
		\end{align}
		Adding~\eqref{fc:AB-increment}, \eqref{fc:C-increment}, and
		\eqref{fc:endpoint-energy}, and applying~\eqref{fc:growth-use}, yields
		\begin{equation}\label{fc:decomposition}
			\frac{L}{2h}\left[E_z(w^+)-E_z(w)
			+\frac{hs}{32}E_z(w^+)\right]
			\le\mathcal C+\mathcal J,
		\end{equation}
		where the current-point and increment terms are, respectively,
		\begin{align}
			\mathcal C={}&\frac{32L-s}{64}\norm{P^+}^2
			-L\norm{P^++e_x}^2-\tau\norm{Q^++e_y}^2
			-\frac{s}{64}\norm{Q^+}^2\nonumber\\
			&+\rho\ip{P^+}{e_x}-\frac{s}{32}\ip{Q^+}{e_y}
			-\frac{L\tau}{256}\ip{Y^+}{e_y}
			-\frac{L\tau}{512}\ip{Q^+}{Y^+}\nonumber\\
			&+\left(\frac{L\tau s}{16384}
			-\frac{\tau s(16L-s)}{2048}\right)\norm{Y^+}^2
			-\frac{Ls(16L-s)}{2048}\norm{X^+}^2,
			\label{fc:current-block}\\
			\mathcal J={}&-\frac1{2h}\norm{\Delta P-\rho\Delta x}^2
			-\frac1{2h}\norm{\Delta Q}^2
			-\left(\frac{\rho\tau}{2h}+\frac{L\tau}{512h}\right)
			\norm{\Delta y}^2\nonumber\\
			&-\frac{s}{32h}\ip{\Delta v}{\Delta y}
			-\frac1h\bigl(\ip{e_x}{\Delta P}+\ip{e_y}{\Delta Q}\bigr).
			\label{fc:increment-block}
		\end{align}
		These expressions include every term in the sum; no term involving $u$ has been discarded.
		
		For the primal terms, expand exactly and then use $s\le2L$:
		\begin{align}
			&\frac{32L-s}{64}\norm{P^+}^2
			-L\norm{P^++e_x}^2+\rho\ip{P^+}{e_x}\nonumber\\
			&\quad=-\frac{32L+s}{64}\norm{P^+}^2
			-\frac{48L+s}{32}\ip{P^+}{e_x}-L\norm{e_x}^2\nonumber\\
			&\quad\le-\frac L4\norm{P^+}^2+4L\norm{e_x}^2.
			\label{fc:primal-current-bound}
		\end{align}
		The last step uses
		$2L\norm{P^+}\norm{e_x}\le(L/4)\norm{P^+}^2+4L\norm{e_x}^2$
		and drops the remaining nonpositive terms. Young's inequality bounds the other three cross terms as follows:
		\begin{align}
			\frac{s}{32}|\ip{Q^+}{e_y}|
			&\le\frac{s}{256}\norm{Q^+}^2+\frac{s}{16}\norm{e_y}^2,
			\label{fc:young-qe}\\
			\frac{L\tau}{512}|\ip{Q^+}{Y^+}|
			&\le\frac{s}{256}\norm{Q^+}^2
			+\frac{L\tau s}{8192}\norm{Y^+}^2,
			\label{fc:young-qy}\\
			\frac{L\tau}{256}|\ip{Y^+}{e_y}|
			&\le\frac{L\tau s}{16384}\norm{Y^+}^2
			+\frac{s}{32}\norm{e_y}^2.
			\label{fc:young-ye}
		\end{align}
		The constants follow directly from $s^2=2L\tau$.
		The resulting coefficient of $\norm{Q^+}^2$ is $-s/128$, while the coefficient of the distance term satisfies
		\[
		\frac{L\tau s}{4096}-\frac{\tau s(16L-s)}{2048}
		\le-\frac{27L\tau s}{4096}\le-\frac{L\tau s}{256}.
		\]
		Dropping $-\tau\norm{Q^++e_y}^2$ and using $3s/32\le4L$, we obtain
		\begin{align}
			\mathcal C\le{}&-\frac L4\norm{P^+}^2
			-\frac{s}{128}\norm{Q^+}^2
			-\frac{L\tau s}{256}\norm{Y^+}^2\nonumber\\
			&-\frac{Ls(16L-s)}{2048}\norm{X^+}^2+4L\norm e^2.
			\label{fc:current-bound}
		\end{align}
		
		\medskip\noindent\textbf{Step 3: Bound the increment terms and absorb the explicit errors.}
		The inequality $\norm{a-b}^2\ge\norm a^2/2-\norm b^2$ and~\eqref{fc:dynamics} give
		\begin{align}
			&-\frac1{2h}\norm{\Delta P-\rho\Delta x}^2
			-\frac1{2h}\norm{\Delta Q}^2\nonumber\\
			&\quad\le-\frac{\mathcal I}{4h}
			+h\rho^2(\norm{P^+}^2+\norm{e_x}^2).
			\label{fc:negative-squares}
		\end{align}
		Since
		$\Delta v=-\Delta(\nabla_yG)+\Delta n-\Delta Q$
		and $\ip{\Delta n}{\Delta y}\ge0$, applying $L_G$-Lipschitz continuity only to the true gradient gives
		\begin{align}
			-\frac{s}{32h}\ip{\Delta v}{\Delta y}
			&\le\frac{L_Gs}{32h}
			\sqrt{\norm{\Delta x}^2+\norm{\Delta y}^2}\norm{\Delta y}
			+\frac{s}{32h}\norm{\Delta Q}\norm{\Delta y}\nonumber\\
			&\le\frac{L_Gs}{64h}\norm{\Delta x}^2
			+\left(\frac{3L_Gs}{64h}+\frac{L\tau}{64h}\right)
			\norm{\Delta y}^2
			+\frac1{32h}\norm{\Delta Q}^2\nonumber\\
			&\le\frac{hL_Gs}{32}(\norm{P^+}^2+\norm{e_x}^2)
			+\left(\frac{3hL_Gs}{32}+\frac{hL\tau}{32}\right)
			(\norm{Q^+}^2+\norm{e_y}^2)
			+\frac{\mathcal I}{32h}.
			\label{fc:feedback-increment}
		\end{align}
		The intermediate step uses
		$\sqrt{a^2+b^2}\,b\le a^2/2+3b^2/2$ and
		\[
		s\norm{\Delta Q}\norm{\Delta y}
		\le\norm{\Delta Q}^2+\frac{s^2}{4}\norm{\Delta y}^2.
		\]
		The bounds $s\le2L\le2L_G$ and $\tau\le L$ imply
		\begin{align}
			h\rho^2+\frac{hL_Gs}{32}&\le\frac{5Ld}{16},&
			\frac{3hL_Gs}{32}+\frac{hL\tau}{32}&\le\frac{ds}{8}.
			\label{fc:increment-coefficients}
		\end{align}
		The first inequality uses $\rho\le L/2$, and the second uses $L\tau\le L_Gs$.
		Both right-hand sides are at most $L/2$.
		
		We retain the explicit cost of $u$ in the error inner products. By~\eqref{fc:inexact-error},
		\begin{align}
			-\frac1h\bigl(\ip{e_x}{\Delta P}+\ip{e_y}{\Delta Q}\bigr)
			&\le\frac{\norm e\sqrt{\mathcal I}}h
			\le\frac{2d+1/32}{h}\mathcal I+\frac8h u^2,
			\label{fc:error-inner-product}\\
			\norm e^2&\le8d^2\mathcal I+2u^2.
			\label{fc:error-square}
		\end{align}
		The first inequality uses $u\sqrt{\mathcal I}\le\mathcal I/32+8u^2$.
		Substituting~\eqref{fc:negative-squares}--\eqref{fc:error-inner-product}
		into~\eqref{fc:increment-block} and dropping the explicit nonpositive $\norm{\Delta y}^2$ term gives
		\begin{align}
			\mathcal J\le{}&\frac{5Ld}{16}\norm{P^+}^2
			+\frac{ds}{8}\norm{Q^+}^2+\frac L2\norm e^2\nonumber\\
			&-\frac{7/32-2d-1/32}{h}\mathcal I+\frac8h u^2.
			\label{fc:increment-bound}
		\end{align}
		Adding~\eqref{fc:current-bound} bounds the right-hand side of~\eqref{fc:decomposition} by
		\begin{align}
			&-\frac{(4-5d)L}{16}\norm{P^+}^2
			-\frac{(1-16d)s}{128}\norm{Q^+}^2
			-\frac{L\tau s}{256}\norm{Y^+}^2\nonumber\\
			&-\frac{Ls(16L-s)}{2048}\norm{X^+}^2
			+\frac{9L}{2}\norm e^2
			-\frac{7/32-2d-1/32}{h}\mathcal I+\frac8h u^2.
			\label{fc:before-absorption}
		\end{align}
		Substituting~\eqref{fc:error-square} and using $Lh\le d$, the magnitude of the negative increment coefficient is at least
		\[
		\frac1h\left(\frac7{32}-2d-\frac1{32}-36d^3\right).
		\]
		At $d=1/64$, the three required numerical margins are
		\begin{equation}\label{fc:numerical-margins}
			\frac{4-5d}{16}=\frac{251}{1024}\ge\frac18,\qquad
			\frac{1-16d}{128}=\frac3{512}\ge\frac1{256},\qquad
			\frac7{32}-2d-\frac1{32}-36d^3
			=\frac{10231}{65536}\ge\frac18.
		\end{equation}
		Multiplying both sides of~\eqref{fc:decomposition} by $2h/L$ therefore gives
		\begin{align}
			&E_z(w^+)-E_z(w)+\frac{hs}{32}E_z(w^+)\nonumber\\
			&\quad\le-\frac h4\norm{P^+}^2
			-\frac{hs}{128L}\norm{Q^+}^2
			-\frac{h\tau s}{128}\norm{Y^+}^2
			-\frac{hs(16L-s)}{1024}\norm{X^+}^2\nonumber\\
			&\qquad-\frac1{4L}\mathcal I
			+\left(18h+\frac{16}{L}\right)u^2.
			\label{fc:base-dissipation}
		\end{align}
		Since $hL\le1/64$, the last positive coefficient is at most $17/L$.
		
		\medskip\noindent\textbf{Step 4: Add the kinetic-energy dissipation and endpoint weight.}
		Equation~\eqref{fc:momentum-dynamics} and $R^+=Q^++v^+$ give the exact filter identity
		\begin{equation}\label{fc:velocity-filter}
			\left(1+\frac{hL}{2}\right)v^+
			=v-\frac{h(8L-s)}{16}Q^+.
		\end{equation}
		Set $a=h(8L-s)/16$. Then $0\le a\le hL/2$ and
		$v-v^+=(hL/2)v^++aQ^+$. The difference-of-squares identity and Young's inequality yield
		\begin{align}
			\norm v^2-\norm{v^+}^2
			&=2\ip{v^+}{v-v^+}+\norm{v-v^+}^2\nonumber\\
			&\ge hL\norm{v^+}^2+2a\ip{v^+}{Q^+}\nonumber\\
			&\ge\frac{hL}{2}\norm{v^+}^2-\frac{hL}{2}\norm{Q^+}^2.
			\label{fc:velocity-energy}
		\end{align}
		Multiplying by $s/(256L^2)$, rearranging, and adding the endpoint weight of the kinetic energy gives
		\begin{align}
			&\frac{s}{256L^2}(\norm{v^+}^2-\norm v^2)
			+\frac{hs}{32}\frac{s}{256L^2}\norm{v^+}^2\nonumber\\
			&\quad\le\frac{hs}{512L}\norm{Q^+}^2
			-\frac{hs}{512L}\left(1-\frac{s}{16L}\right)\norm{v^+}^2\nonumber\\
			&\quad\le\frac{hs}{512L}\norm{Q^+}^2
			-\frac{7hs}{4096L}\norm{v^+}^2.
			\label{fc:velocity-dissipation}
		\end{align}
		The last step uses $s\le2L$. Adding~\eqref{fc:velocity-dissipation}
		to~\eqref{fc:base-dissipation} and using
		$H_z(w)=E_z(w)+s\norm v^2/(256L^2)$,
		the coefficient of the dual direction becomes
		$-hs/(128L)+hs/(512L)=-3hs/(512L)$.
		All remaining coefficients agree with~\eqref{fc:dissipation}, proving
		\eqref{fc:inexact-conclusion}. The entire argument is pathwise and therefore requires
		neither independence nor unbiasedness of $e_x,e_y$.
	\end{proof}
	
	The step-size and damping choices in~\eqref{eq:step-damping} remain in force
	throughout the rest of the concave analysis.

	The actual endpoint momentum contains the third stochastic estimate. To take
	conditional expectations without losing its linear error terms, we compare it
	with a virtual endpoint state in which only that final estimate is replaced by
	its conditional mean.
	
	For the analysis, define the virtual endpoint momentum before drawing the third batch by
	\begin{equation}\label{eq:virtual}
		v_{t+1}^{0}:=\alpha v_t+k\bigl(\nabla_y F(x_{t+1},y_{t+1})
		-\tau y_{t+1}-n_{t+1}\bigr),\qquad
		w_{t+1}^{0}:=(x_{t+1},y_{t+1},\xi_{t+1},n_{t+1},v_{t+1}^{0}).
	\end{equation}
	The algorithm need not compute $v_{t+1}^{0}$. This quantity is $\mathcal F_t^{(2)}$-measurable, whereas the actual momentum satisfies
	\begin{equation}\label{eq:virtual-noise}
		v_{t+1}=v_{t+1}^{0}+k c_{y,t}.
	\end{equation}
	
	\begin{lemma}\label{lem:virtual-error}
		Suppose that Assumption~\ref{ass:model} holds, and let the iterates be
		generated by Algorithm~\ref{alg:stochastic} with the parameter choices
		in~\eqref{eq:step-damping}. Apply the notation of the preceding subsection with fixed center $z_t$, initial state $w_t$, and endpoint $w_{t+1}^{0}$.
		Let $\mathcal I_t^{0}$ and $\mathcal D_t^{0}$ denote the corresponding sum of squared increments and dissipation, respectively.
		For every realization of $\Omega_t^{(0)}$ and $\Omega_t^{(1)}$, define
		\begin{equation}\label{eq:sample-U}
			U_t(\Omega_t^{(0)},\Omega_t^{(1)}):=2d\norm{a_t}+2\norm{b_t},
		\end{equation}
		where $a_t$ and $b_t$ are the explicit sample averages in~\eqref{eq:noise}.
		Then
		\begin{equation}\label{eq:virtual-fixed}
			H_{z_t}(w_{t+1}^{0})-H_{z_t}(w_t)
			\le-\mathcal D_t^{0}+\frac{17}{L}U_t^2,
			\qquad U_t:=U_t(\Omega_t^{(0)},\Omega_t^{(1)}).
		\end{equation}
	\end{lemma}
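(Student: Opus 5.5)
The plan is to apply Lemma~\ref{fc:inexact-dissipation} with $z=z_t$, $w=w_t$ and $w^+=w_{t+1}^{0}$. Once its hypotheses \eqref{fc:dynamics}, \eqref{fc:momentum-recursion} and \eqref{fc:inexact-error} are verified with $u=U_t$, conclusion \eqref{fc:inexact-conclusion} gives the claim. Indeed, it yields $H_{z_t}(w_{t+1}^0)-H_{z_t}(w_t)\le -\mathcal D_t^0+\frac{17}{L}U_t^2-\frac{hs}{32}H_{z_t}(w_{t+1}^0)$, and the last term is nonpositive by Lemma~\ref{lem:positive}. Feasibility and the normal-cone memberships $\xi_t\in N_X(x_t)$, $\xi_{t+1}\in N_X(x_{t+1})$, $n_t\in N_Y(y_t)$ and $n_{t+1}\in N_Y(y_{t+1})$ come from Lemma~\ref{lem:feasibility}.

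\textbf{Momentum recursion.} Since $\nabla_yG(x,y;z)=\nabla_yF(x,y)-\tau y$, definition \eqref{eq:virtual} is exactly \eqref{fc:momentum-recursion} with $v^+=v_{t+1}^0$.

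\textbf{Dynamics.} We have $\nabla_xG(x,y;z)=\nabla_xF(x,y)+2L(x-z)$. The definition of $\xi_{t+1}$ gives $\Delta x=-h\bigl(\widehat g^{(1)}_{x,t}+2L(\widetilde x_t-z_t)+\xi_{t+1}\bigr)$. Subtracting $-hP^+=-h(\nabla_xF(x_{t+1},y_{t+1})+2L(x_{t+1}-z_t)+\xi_{t+1})$ and using $\widehat g^{(1)}=\nabla F(\widetilde x_t,\widetilde y_t)+b_t$, we obtain
\[
e_x=\nabla_xF(\widetilde x_t,\widetilde y_t)-\nabla_xF(x_{t+1},y_{t+1})+2L(\widetilde x_t-x_{t+1})+b_{x,t}.
\]
On the dual side, the definition of $n_{t+1}$ gives $\Delta y=-h\bigl((1+k)n_{t+1}-\widehat g^{(1)}_{y,t}+\tau\widetilde y_t-\bar v_t\bigr)$. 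We use $\bar v_t=\alpha v_t+k(\widehat g^{(1)}_{y,t}-\tau\widetilde y_t)$ and $Q^+=-\nabla_yF(x_{t+1},y_{t+1})+\tau y_{t+1}+n_{t+1}-v^0_{t+1}$, where $v^0_{t+1}=\alpha v_t+k(\nabla_yF(x_{t+1},y_{t+1})-\tau y_{t+1}-n_{t+1})$. The $\alpha v_t$ and $kn_{t+1}$ terms then cancel, which is precisely why $n_{t+1}$ carries the factor $1+k$. What remains is
\[
e_y=(1+k)\bigl[-\widehat g^{(1)}_{y,t}+\tau\widetilde y_t+\nabla_yF(x_{t+1},y_{t+1})-\tau y_{t+1}\bigr].
\]
So $e=\mathcal M\bigl(\nabla G(\widetilde u_t)-\nabla G(u_{t+1})\bigr)$ up to the noise $b_t$, with an operator $\mathcal M$ whose factors are bounded by $1+k$ and constant in $h$. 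The noise contribution satisfies $\norm{\cdot}\le(1+k)\norm{b_t}$.

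\textbf{Error bound, the main obstacle.} We must bound $\norm{\widetilde u_t-u_{t+1}}$ by $h\sqrt{\mathcal I}$ plus noise, with the exact constant $2d$. The predictor step in projected form reads $\widetilde x_t-x_t=-h(\widehat g^{(0)}_{x,t}+2L(x_t-z_t)+\xi_t+\widetilde\xi)$ with $\widetilde\xi\in N_X(\widetilde x_t)$, and the corrector gives $x_{t+1}-x_t=-h(\widehat g^{(1)}_{x,t}+2L(\widetilde x_t-z_t)+\xi_{t+1})$. Comparing the two and using monotonicity of the normal cone (nonexpansiveness of $\Pi_X$ applied to the two pre-projection points, both of which lie ``around'' $x_t$), we get
\[
\norm{\widetilde x_t-x_{t+1}}\le h\bigl\|\widehat g^{(0)}_{x,t}+2L(x_t-z_t)+\xi_t-\widehat g^{(1)}_{x,t}-2L(\widetilde x_t-z_t)-\xi_{t+1}\bigr\|.
\]
More carefully, $x_{t+1}=\Pi_X(x_t-h(\cdot))$ and $\widetilde x_t=\Pi_X(x_t-h(\cdot'-\xi_t))$, and $x_t=\Pi_X(x_t+h\xi_t)$ handles the $\xi_t$ shift. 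Rewriting the right-hand side in terms of $P=\nabla_xG(u_t)+\xi_t$ and $P^+$ yields $h(\norm{\Delta P}+L_G\norm{\widetilde u_t-u_{t+1}}+\norm{a_t}+\norm{b_t})$-type terms; the dual side is analogous with $Q$, $v$ and $n$. Since $hL_G=d=1/64$ is small, the $\norm{\widetilde u_t-u_{t+1}}$ term is absorbed, giving $\norm{\widetilde u_t-u_{t+1}}\lesssim h(\sqrt{\mathcal I}+\norm{a_t}+\norm{b_t})$. Multiplying by the Lipschitz factor $L_G(1+k)$ produces $\norm e\le 2d\sqrt{\mathcal I}+2d\norm{a_t}+2\norm{b_t}$. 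The delicate part is tracking the exact constants, using $k\le hL/2$ and $d=1/64$, so that the coefficients land at $2d$ on $\sqrt{\mathcal I}$ and on $\norm{a_t}$, and within $2$ on $\norm{b_t}$. With $u=U_t$, Lemma~\ref{fc:inexact-dissipation} then concludes.
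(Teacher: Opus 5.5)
Your proposal is correct and follows the paper's proof. It uses the same error vectors $e_{x,t},e_{y,t}$, the same projection estimate $\norm{(\widetilde x_t-x_{t+1},\widetilde y_t-y_{t+1})}\le h(\sqrt{\mathcal I_t^0}+\norm{e_t}+\norm{a_t})$, and the same absorption via $hL_G=d$. The projection estimate is cleanest to see by noting that $(x_{t+1},y_{t+1})=(x_t-h(P^++e_{x,t}),\,y_t-h(Q^++e_{y,t}))$ is feasible and hence its own projection. You absorb into $\norm{(\widetilde x_t-x_{t+1},\widetilde y_t-y_{t+1})}$ rather than into $\norm{e_t}$ as the paper does, but this gives the identical coefficients $r/(1-r)$ and $(1+k)/(1-r)$ with $r=d(1+k)$. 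The constants you left unverified check out: both coefficients stay within $2d$ and $2$ because $(1+k)(1+2d)\le2$.
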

	\begin{proof}
		Fix the two realized batches.  By~\eqref{eq:noise}, their errors are
		\begin{align*}
			a_t&=\frac1m\sum_{i=1}^m
			[\nabla f(x_t,y_t;\omega_{t,i}^{(0)})-\nabla F(x_t,y_t)],\\
			b_t&=\frac1m\sum_{i=1}^m
			[\nabla f(\widetilde x_t,\widetilde y_t;\omega_{t,i}^{(1)})
			-\nabla F(\widetilde x_t,\widetilde y_t)].
		\end{align*}
		No expectation is taken in this proof.
		The correction step and~\eqref{eq:virtual} yield the dynamics
		\begin{align}
			x_{t+1}-x_t&=-h(P^++e_{x,t}),\nonumber\\
			y_{t+1}-y_t&=-h(Q^++e_{y,t}),\label{eq:virtual-dynamics}\\
			e_{x,t}&=\nabla_xG(\widetilde x_t,\widetilde y_t;z_t)
			-\nabla_xG(x_{t+1},y_{t+1};z_t)+b_{x,t},\nonumber\\
			e_{y,t}&=(1+k)\bigl[\nabla_yG(x_{t+1},y_{t+1};z_t)
			-\nabla_yG(\widetilde x_t,\widetilde y_t;z_t)-b_{y,t}\bigr].\label{eq:virtual-errors}
		\end{align}
		Here $P^+=\nabla_xG(x_{t+1},y_{t+1};z_t)+\xi_{t+1}$ and
		$Q^+=-\nabla_yG(x_{t+1},y_{t+1};z_t)+n_{t+1}-v_{t+1}^{0}$.
		The second dynamical identity uses
		\[
		\bar v_t-v_{t+1}^{0}
		=k\bigl[\nabla_yG(\widetilde x_t,\widetilde y_t;z_t)+b_{y,t}
		-\nabla_yG(x_{t+1},y_{t+1};z_t)+n_{t+1}\bigr]
		\]
		and the definition of $n_{t+1}$, with the normal-cone coefficient equal to $k-(1+k)=-1$.
		
		Write $e_t=(e_{x,t},e_{y,t})$. Subtracting~\eqref{eq:virtual-dynamics} from the unprojected predictor
		and applying the nonexpansiveness of the projection gives
		\[
		\norm{(\widetilde x_t-x_{t+1},\widetilde y_t-y_{t+1})}
		\le h\left(\sqrt{\mathcal I_t^{0}}+\norm{e_t}+\norm{a_t}\right).
		\]
		Thus, setting $r=d(1+k)$, we obtain from~\eqref{eq:virtual-errors} that
		\begin{align*}
			\norm{e_t}
			&\le (1+k)L_G\norm{(\widetilde x_t-x_{t+1},\widetilde y_t-y_{t+1})}
			+(1+k)\norm{b_t}\\
			&\le r\left(\sqrt{\mathcal I_t^{0}}+\norm{e_t}+\norm{a_t}\right)
			+(1+k)\norm{b_t}.
		\end{align*}
		Since $k\le1/384$ and $d=1/64$, we have $r<1$,
		$r/(1-r)\le2d$, and $(1+k)/(1-r)\le2$. Rearranging yields
		\begin{equation}\label{eq:error-U}
			\norm{e_t}\le2d\sqrt{\mathcal I_t^{0}}+U_t.
		\end{equation}
		The virtual momentum satisfies the exact recursion required in the preceding subsection.
		Substituting~\eqref{eq:error-U} into the fixed-center descent inequality
		and dropping the nonnegative weighted endpoint term $(hs/32)H_{z_t}(w_{t+1}^{0})$ gives~\eqref{eq:virtual-fixed}.
	\end{proof}

	We next restore the center update. The center sensitivity bound converts its
	motion into a controlled perturbation of the fixed-center potential, after
	which the three fresh batches yield a one-step conditional descent inequality.
	
	\begin{lemma}\label{lem:move}
		Suppose that Assumption~\ref{ass:model} holds and $0<\tau\le L$. For any state $w=(x,y,\xi,n,v)$ satisfying~\eqref{eq:normal-membership}
		and any $z'=z+\delta$, we have
		\begin{equation}\label{eq:moving}
			\begin{aligned}
				H_{z'}(w)-H_z(w)\le{}&
				\left[2L\norm{x-x^\star(z)}+4\norm{\nabla_xG(x,y;z)+\xi}
				+\frac18\norm v+\frac{s}{128}\norm{y-y^\star(z)}\right]\norm\delta\\
				&+\frac{1025L}{128}\norm\delta^2.
			\end{aligned}
		\end{equation}
	\end{lemma}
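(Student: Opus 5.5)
The plan is to expand the difference term by term. The kinetic term $\frac{s}{256L^2}\norm v^2$ in $H_z$ does not depend on the center, and neither do $\nabla_yG(x,y;z)=\nabla_yF(x,y)-\tau y$, $\xi$, $n$, $v$. Hence $H_{z'}(w)-H_z(w)$ is a sum of four center-dependent pieces of $E_z$:
\begin{itemize}
\item the value gap $G(x,y;z)-p(z)$;
\item the primal residual $\norm{\nabla_xG(x,y;z)+\xi}^2$;
\item the coupling $\ip v{y-y^\star(z)}$;
\item the distance term $\norm{y-y^\star(z)}^2$.
\end{itemize}
I will bound each piece separately by a linear term in $\norm\delta$ plus a quadratic term in $\norm\delta^2$, and then add the bounds.

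\textbf{Value gap.} The change in $G$ is explicit:
\[
G(x,y;z')-G(x,y;z)=L\norm{x-z-\delta}^2-L\norm{x-z}^2=-2L\ip{x-z}\delta+L\norm\delta^2 .
\]
For $p$, I use Lemma~\ref{lem:sensitivity}: $\nabla p(z)=2L(z-x^\star(z))$ and $\nabla p$ is $6L$-Lipschitz. The descent lemma then gives
\[
p(z')-p(z)\le 2L\ip{z-x^\star(z)}\delta+3L\norm\delta^2 .
\]
Since $0<1-s/(16L)\le1$, the contribution $-(1-s/(16L))[\Delta G-\Delta p]$ is at most $(\Delta p-\Delta G)_+$. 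In this difference the $z$-terms combine, because $2L\ip{z-x^\star(z)}\delta+2L\ip{x-z}\delta=2L\ip{x-x^\star(z)}\delta$. So this piece is bounded by
\[
2L\norm{x-x^\star(z)}\norm\delta+2L\norm\delta^2 .
\]
A slightly cruder grouping of these quadratic terms is also harmless, since the stated quadratic constant leaves room.

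\textbf{Primal residual.} Write $P=\nabla_xG(x,y;z)+\xi$. Since $\nabla_xG(x,y;z')=\nabla_xG(x,y;z)-2L\delta$, we get
\[
\frac1L\bigl(\norm{P-2L\delta}^2-\norm P^2\bigr)=-4\ip P\delta+4L\norm\delta^2\le 4\norm P\norm\delta+4L\norm\delta^2 .
\]

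\textbf{Terms involving $y^\star$.} Set $\eta=y^\star(z')-y^\star(z)$. By Lemma~\ref{lem:sensitivity} and $s^2=2L\tau$,
\[
\norm\eta\le\sqrt{2L/\tau}\,\norm\delta=(2L/s)\norm\delta .
\]
The coupling term changes by $-\frac{s}{16L}\ip v\eta$, which is at most $\frac{s}{16L}\cdot\frac{2L}{s}\norm v\norm\delta=\frac18\norm v\norm\delta$.

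For the distance term, write $Y_0=y-y^\star(z)$. Then
\[
\frac{\tau}{256}\bigl(\norm{Y_0-\eta}^2-\norm{Y_0}^2\bigr)\le\frac{\tau}{128}\norm{Y_0}\norm\eta+\frac{\tau}{256}\norm\eta^2 .
\]
Using $2L\tau/s=s$ and $4L^2\tau/s^2=2L$, the right-hand side is at most $\frac{s}{128}\norm{Y_0}\norm\delta+\frac{L}{128}\norm\delta^2$.

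\textbf{Summing.} Adding the four bounds reproduces the bracketed linear coefficient in~\eqref{eq:moving} exactly. The quadratic coefficient comes out as at most $(6+1/128)L$, which is below $1025L/128$, so the lemma follows.

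The only delicate step is the value-gap term. It requires an \emph{upper} bound on $p(z')-p(z)$, which must come from the smoothness of $p$ (Lemma~\ref{lem:sensitivity}) rather than from convexity. It also requires checking that the linear terms in $z$ cancel, leaving only $x-x^\star(z)$. The normal-cone membership~\eqref{eq:normal-membership} is needed only to ensure that $H_z$ is the well-defined potential; it is not used in the estimates themselves.
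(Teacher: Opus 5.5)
Your proposal is correct and follows the paper's proof essentially line by line. Both arguments use the same four center-dependent pieces. For the value gap, both use the $6L$-smoothness of $p$ from Lemma~\ref{lem:sensitivity} together with the identity $2L(x-z)=2L(x-x^\star(z))-\nabla p(z)$. For the primal residual, both use the shift $\nabla_xG(x,y;z')=\nabla_xG(x,y;z)-2L\delta$. For the two $y^\star$ terms, both use the sensitivity bound with $s\sqrt{2L/\tau}=2L$.

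The only difference is in the value-gap term. You keep the sign of the $+L\norm\delta^2$ term in the change of $G$, which gives $2L\norm\delta^2$ there. The paper bounds both pieces in absolute value and gets $4L\norm\delta^2$. As a result, your total quadratic constant is $769L/128$, below the paper's stated $1025L/128$.
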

	\begin{proof}
		First,~\eqref{eq:G} and~\eqref{eq:p-gradient} give
		\begin{align*}
			G(x,y;z')-G(x,y;z)&=-2L\ip{x-z}\delta+L\norm\delta^2,\\
			|p(z')-p(z)-\ip{\nabla p(z)}\delta|&\le3L\norm\delta^2.
		\end{align*}
		Using $2L(x-z)=2L(x-x^\star(z))-\nabla p(z)$,
		the increment of the function-value term in the potential function is at most
		$2L\norm{x-x^\star(z)}\norm\delta+4L\norm\delta^2$.
		Next, $\nabla_xG(x,y;z')=\nabla_xG(x,y;z)-2L\delta$, so
		\[
		\frac1L\left(\norm{\nabla_xG(x,y;z')+\xi}^2-\norm{\nabla_xG(x,y;z)+\xi}^2\right)
		\le4\norm{\nabla_xG(x,y;z)+\xi}\norm\delta+4L\norm\delta^2.
		\]
		Finally,~\eqref{eq:sensitivity} and $s\sqrt{2L/\tau}=2L$ imply
		\begin{align*}
			\frac{s}{16L}|\ip v{y^\star(z)-y^\star(z')}|&\le\frac18\norm v\norm\delta,\\
			\frac\tau{256}\left(\norm{y-y^\star(z')}^2-\norm{y-y^\star(z)}^2\right)
			&\le\frac{s}{128}\norm{y-y^\star(z)}\norm\delta+\frac L{128}\norm\delta^2.
		\end{align*}
		The remaining terms are independent of $z$. Adding the four increments proves~\eqref{eq:moving}.
	\end{proof}
	
	\begin{lemma}\label{lem:virtual-joint}
		Suppose that Assumption~\ref{ass:model} holds, and let the iterates be
		generated by Algorithm~\ref{alg:stochastic} with the parameter choices
		in~\eqref{eq:step-damping}. Choose the center relaxation
		\begin{equation}\label{eq:center-relaxation}
			\beta:=\frac{hs}{4096},\qquad 0<\beta<\frac1{12}.
		\end{equation}
		For every realization of $\Omega_t^{(0)}$ and $\Omega_t^{(1)}$, the following
		inequality holds before any expectation is taken:
		\begin{equation}\label{eq:virtual-joint}
			\mathcal V(z_{t+1},w_{t+1}^{0})-\mathcal V_t
			\le-\frac12\mathcal D_t^{0}
			-\frac{\beta}{8L}\norm{\nabla p(z_t)}^2+\frac{17}{L}U_t^2.
		\end{equation}
	\end{lemma}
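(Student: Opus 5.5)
We decompose the one-step change of the potential into a fixed-center part and a center-motion part:
\[
\mathcal V(z_{t+1},w_{t+1}^{0})-\mathcal V_t
=\bigl[p(z_{t+1})-p(z_t)\bigr]
+\bigl[H_{z_{t+1}}(w_{t+1}^{0})-H_{z_t}(w_{t+1}^{0})\bigr]
+\bigl[H_{z_t}(w_{t+1}^{0})-H_{z_t}(w_t)\bigr].
\]
The third bracket is handled by Lemma~\ref{lem:virtual-error}, but we keep the endpoint weight $(hs/32)H_{z_t}(w_{t+1}^{0})$ from Lemma~\ref{fc:inexact-dissipation} rather than dropping it. With $\delta:=z_{t+1}-z_t=\beta(x_{t+1}-z_t)$, the first bracket follows from the $6L$-smoothness of $p$ in Lemma~\ref{lem:sensitivity}:
\[
p(z_{t+1})-p(z_t)\le\ip{\nabla p(z_t)}{\delta}+3L\norm\delta^2.
\]
The second bracket is bounded by Lemma~\ref{lem:move}, applied at the state $w_{t+1}^{0}$ and center $z_t$.

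\textbf{The descent term in $\norm{\nabla p(z_t)}^2$.} Write $x_{t+1}-z_t=(x_{t+1}-x^\star(z_t))+(x^\star(z_t)-z_t)$ and use $x^\star(z_t)-z_t=-\nabla p(z_t)/(2L)$. This gives
\[
\ip{\nabla p(z_t)}{\delta}
=-\frac{\beta}{2L}\norm{\nabla p(z_t)}^2+\beta\ip{\nabla p(z_t)}{X^+},
\qquad X^+=x_{t+1}-x^\star(z_t).
\]
Young's inequality bounds the cross term by $\frac{\beta}{4L}\norm{\nabla p}^2+\beta L\norm{X^+}^2$. Similarly,
\[
\norm\delta^2\le2\beta^2\norm{X^+}^2+\frac{\beta^2}{2L^2}\norm{\nabla p}^2.
\]
Because $\beta$ is tiny, the quadratic terms consume only a small fraction of $\frac{\beta}{4L}\norm{\nabla p}^2$. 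What remains is $-\frac{\beta}{8L}\norm{\nabla p}^2$ plus terms of order $\beta L\norm{X^+}^2$.

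\textbf{Absorbing the center-motion terms.} Lemma~\ref{lem:move} produces linear terms of the form
\[
\bigl[2L\norm{X^+}+4\norm{P^+}+\tfrac18\norm{v^{0}_{t+1}}+\tfrac{s}{128}\norm{Y^+}\bigr]\norm\delta .
\]
We split each one by Young's inequality into a piece of size $\beta\cdot(\text{state quantity})^2$ and a piece of size $\beta\cdot(\ldots)\norm{\nabla p}^2/L$ or $\beta L\norm{X^+}^2$, choosing the weights so that the $\norm{\nabla p}^2$ pieces still leave $-\beta/(8L)$. All state-quantity squares must then be absorbed by $\frac12\mathcal D_t^{0}$ together with the retained $(hs/32)H_{z_t}(w_{t+1}^{0})$. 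By Lemma~\ref{lem:positive}, that retained weight dominates
\[
\frac{hs}{32}\Bigl[\tfrac{1}{2L}\norm{P^+}^2+\tfrac{s}{32}\norm{X^+}^2+\tfrac{\tau}{2}\norm{Y^+}^2+\tfrac{s}{256L^2}\norm{v^0_{t+1}}^2\Bigr].
\]
The dissipation $\mathcal D$ supplies $\frac h4\norm{P^+}^2$, $\frac{hs(16L-s)}{1024}\norm{X^+}^2$, $\frac{h\tau s}{128}\norm{Y^+}^2$ and $\frac{7hs}{4096L}\norm{v^+}^2$.

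Since $\beta=hs/4096$, every error term carries a factor $hs$ times a small constant. Each one matches a dissipation term of the same homogeneity, up to numeric constants:
\[
\beta L\norm{X^+}^2\lesssim\tfrac{hsL}{4096}\norm{X^+}^2\quad\text{versus}\quad\tfrac{hsL}{128}\norm{X^+}^2,
\qquad
\beta\norm{v}^2/L\ \text{versus}\ \tfrac{hs}{L}\norm{v^+}^2.
\]
The $\norm{Y^+}^2$ term needs $s^2\beta/L=2\tau\beta$ against $h\tau s$, which also closes. The noise term $\frac{17}{L}U_t^2$ passes through unchanged. The bound $\beta<1/12$ follows from $hs\le 2hL\le1/96$.

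\textbf{Main obstacle.} The delicate step is bookkeeping the Young splits so that the numeric constants fit. The $\norm{X^+}^2$ budget is hit three times: by $p$-smoothness, by the $\nabla p$ cross term, and by the $2L\norm{X^+}\norm\delta$ term. Each of these is of order $\beta L$, with constants up to roughly 10, against an available $hsL/128=32\beta L$. I would first try equal-weight splittings and check that the total is at most $16\beta L$, i.e.\ half the budget, consistent with keeping only $\frac12\mathcal D_t^0$.
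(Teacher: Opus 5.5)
Your proposal is correct and follows the paper's proof. It uses the same three-way split into the fixed-center descent of Lemma~\ref{lem:virtual-error}, the center-movement bound of Lemma~\ref{lem:move} at $(z_t,w_{t+1}^0)$, and $6L$-smoothness of $p$ with $\delta_t=\beta\bigl(X_t^+-\nabla p(z_t)/(2L)\bigr)$, all absorbed into half of $\mathcal D_t^0$. The differences are cosmetic: the paper bundles the four linear terms of Lemma~\ref{lem:move} by a weighted Cauchy--Schwarz into a budget $\mathcal B_t\le\mathcal D_t^0$ and applies one Young inequality, and it never needs the endpoint weight $(hs/32)H_{z_t}(w_{t+1}^0)$ that you retain. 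Your termwise splits also close with room to spare; for instance, the $\norm{X_t^+}^2$ cost can be kept near $13\beta L$ against the $28\beta L$ available in $\frac12\mathcal D_t^0$.
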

	\begin{proof}
		Let $X_t^+=x_{t+1}-x^\star(z_t)$, $Y_t^+=y_{t+1}-y^\star(z_t)$,
		and $P_t^+=\nabla_xG(x_{t+1},y_{t+1};z_t)+\xi_{t+1}$. Define the dissipation budget
		\begin{equation}\label{eq:anchor-budget}
			\mathcal B_t:=\frac{hs}{8L}\norm{P_t^+}^2
			+\frac{7hs}{4096L}\norm{v_{t+1}^{0}}^2
			+\frac{h\tau s}{128}\norm{Y_t^+}^2
			+\frac{7hLs}{512}\norm{X_t^+}^2.
		\end{equation}
		Since $s\le2L$ and $16L-s\ge14L$, comparison with each term of the fixed-center dissipation gives
		$0\le\mathcal B_t\le\mathcal D_t^{0}$.
		The weighted Cauchy--Schwarz inequality yields
		\begin{equation}\label{eq:anchor-cauchy}
			\left[2L\norm{X_t^+}+4\norm{P_t^+}+\frac18\norm{v_{t+1}^{0}}
			+\frac{s}{128}\norm{Y_t^+}\right]^2
			\le\frac{432L}{hs}\mathcal B_t.
		\end{equation}
		The constant follows by summing the four coefficients:
		\[
		\frac{16L}{hs}\left(\frac{128}{7}+8+\frac47+\frac1{1024}\right)
		<\frac{432L}{hs}.
		\]
		The center increment $\delta_t=z_{t+1}-z_t$ satisfies
		\begin{equation}\label{eq:delta}
			\delta_t=\beta\left(X_t^+-\frac{\nabla p(z_t)}{2L}\right),\qquad
			\norm{\delta_t}^2\le2\beta^2\norm{X_t^+}^2+
			\frac{\beta^2}{2L^2}\norm{\nabla p(z_t)}^2.
		\end{equation}
		Substituting~\eqref{eq:anchor-cauchy} into~\eqref{eq:moving} and using
		$ab\le a^2/4+b^2$ and $hs\le32$ gives
		\begin{align}
			H_{z_{t+1}}(w_{t+1}^{0})-H_{z_t}(w_{t+1}^{0})
			&\le\frac14\mathcal B_t+\frac{704L}{hs}\norm{\delta_t}^2\nonumber\\
			&\le\frac14\mathcal B_t+\frac{1408L\beta^2}{hs}\norm{X_t^+}^2
			+\frac{352\beta^2}{Lhs}\norm{\nabla p(z_t)}^2.\label{eq:anchor-cost}
		\end{align}
		Moreover, $p$ is $6L$-smooth, so~\eqref{eq:delta} and
		$\ip{\nabla p(z_t)}{X_t^+}\le\norm{\nabla p(z_t)}^2/(8L)+2L\norm{X_t^+}^2$ imply
		\begin{align}
			p(z_{t+1})-p(z_t)
			&\le-\frac\beta L\left(\frac38-\frac32\beta\right)\norm{\nabla p(z_t)}^2
			+\beta L(2+6\beta)\norm{X_t^+}^2\nonumber\\
			&\le-\frac\beta{4L}\norm{\nabla p(z_t)}^2
			+\frac52\beta L\norm{X_t^+}^2.\label{eq:outer}
		\end{align}
		Add~\eqref{eq:virtual-fixed},~\eqref{eq:anchor-cost}, and~\eqref{eq:outer}.
		Since $\beta=hs/4096$, we have
		\begin{align*}
			\frac52\beta L+\frac{1408L\beta^2}{hs}
			&\le\frac{7hLs}{2048},\\
			\frac\beta{4L}-\frac{352\beta^2}{Lhs}
			&=\frac\beta L\left(\frac14-\frac{352}{4096}\right)
			\ge\frac\beta{8L}.
		\end{align*}
		The right-hand side of the first line is the coefficient in $\mathcal B_t/4$ multiplying $\norm{X_t^+}^2$.
		Hence, the sum of the right-hand sides is at most
		\[
		-\mathcal D_t^{0}+\frac12\mathcal B_t
		-\frac\beta{8L}\norm{\nabla p(z_t)}^2+\frac{17}{L}U_t^2.
		\]
		Using $\mathcal B_t\le\mathcal D_t^{0}$ now gives~\eqref{eq:virtual-joint}.
	\end{proof}
	
	The center choice in~\eqref{eq:center-relaxation} remains in force throughout
	the rest of the concave analysis.
	
	\begin{proposition}
		\label{prop:spde-sample-path}
		Suppose that Assumption~\ref{ass:model} holds, and let the iterates be
		generated by Algorithm~\ref{alg:stochastic} with the parameter choices
		in~\eqref{eq:step-damping} and~\eqref{eq:center-relaxation}. For the three realized batches
		$\Omega_t^{(0)},\Omega_t^{(1)},\Omega_t^{(2)}$, define
		\begin{align}
			\Lambda_t(\Omega_t^{(2)}):={}&
			-\frac{2k}{L}\ip{-\nabla_yG(x_{t+1},y_{t+1};z_{t+1})
				+n_{t+1}-v_{t+1}^{0}}{c_{y,t}}\nonumber\\
			&+\frac{sk}{16L}\ip{c_{y,t}}{y_{t+1}-y^\star(z_{t+1})}
			+\frac{sk}{128L^2}\ip{v_{t+1}^{0}}{c_{y,t}}\nonumber\\
			&+\left(\frac1L+\frac{s}{256L^2}\right)k^2\norm{c_{y,t}}^2,
			\label{eq:sample-endpoint-error}
		\end{align}
		where
		\[
		c_{y,t}=\frac1m\sum_{i=1}^m
		[\nabla_y f(x_{t+1},y_{t+1};\omega_{t,i}^{(2)})
		-\nabla_yF(x_{t+1},y_{t+1})].
		\]
		Then, before taking any expectation,
		\begin{equation}\label{eq:spde-sample-path-descent}
			\mathcal V_{t+1}-\mathcal V_t
			\le-\frac12\mathcal D_t^{0}
			-\frac{\beta}{8L}\norm{\nabla p(z_t)}^2
			+\frac{17}{L}U_t(\Omega_t^{(0)},\Omega_t^{(1)})^2
			+\Lambda_t(\Omega_t^{(2)}).
		\end{equation}
	\end{proposition}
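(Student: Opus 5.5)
The plan is to split the one-step change of the potential at the virtual endpoint:
\[
\mathcal V_{t+1}-\mathcal V_t
=\bigl[\mathcal V(z_{t+1},w_{t+1})-\mathcal V(z_{t+1},w_{t+1}^{0})\bigr]
+\bigl[\mathcal V(z_{t+1},w_{t+1}^{0})-\mathcal V_t\bigr].
\]
The second bracket is controlled pathwise by Lemma~\ref{lem:virtual-joint}. That lemma yields $-\tfrac12\mathcal D_t^{0}-\tfrac{\beta}{8L}\norm{\nabla p(z_t)}^2+\tfrac{17}{L}U_t^2$ for every realization of $\Omega_t^{(0)},\Omega_t^{(1)}$. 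It remains to show that the first bracket is bounded by $\Lambda_t(\Omega_t^{(2)})$, which I expect to hold with equality.

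The states $w_{t+1}$ and $w_{t+1}^{0}$ share the same $x_{t+1},y_{t+1},\xi_{t+1},n_{t+1}$ and the same center $z_{t+1}$. By \eqref{eq:virtual-noise}, they differ only in the momentum, through $v_{t+1}=v_{t+1}^{0}+kc_{y,t}$. Both states satisfy the normal-cone memberships required by \eqref{eq:energy}, by Lemma~\ref{lem:feasibility}. Consequently the terms $p(z_{t+1})-\inf p$, the $G$-value term, and the $P$-term cancel. Only three terms of $H_{z_{t+1}}$ depend on $v$, and I expand each exactly. Write $Q^{0}:=-\nabla_yG(x_{t+1},y_{t+1};z_{t+1})+n_{t+1}-v_{t+1}^{0}$. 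First,
\[
\frac1L\norm{Q^{0}-kc_{y,t}}^2-\frac1L\norm{Q^{0}}^2=-\frac{2k}{L}\ip{Q^{0}}{c_{y,t}}+\frac{k^2}{L}\norm{c_{y,t}}^2 .
\]
Second, the cross term changes by $\tfrac{sk}{16L}\ip{c_{y,t}}{y_{t+1}-y^\star(z_{t+1})}$. Third, the kinetic term changes by
\[
\frac{s}{256L^2}\bigl(2k\ip{v_{t+1}^{0}}{c_{y,t}}+k^2\norm{c_{y,t}}^2\bigr)
=\frac{sk}{128L^2}\ip{v_{t+1}^{0}}{c_{y,t}}+\frac{sk^2}{256L^2}\norm{c_{y,t}}^2 .
\]
Summing these three increments reproduces \eqref{eq:sample-endpoint-error} term by term. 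Hence $\mathcal V(z_{t+1},w_{t+1})-\mathcal V(z_{t+1},w_{t+1}^{0})=\Lambda_t$, and adding the two brackets gives \eqref{eq:spde-sample-path-descent}.

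The argument has no real obstacle; it amounts to careful bookkeeping. The only point needing care is that every quantity in $\Lambda_t$ other than $c_{y,t}$ must be evaluated at the new center $z_{t+1}$ rather than $z_t$. This is consistent because the comparison is done at the fixed center $z_{t+1}$ after Lemma~\ref{lem:virtual-joint} has already absorbed the center motion. Fixing the center in this way keeps the linear terms in $c_{y,t}$ with $\mathcal F_t^{(2)}$-measurable coefficients, which the later expectation step will need. No independence or unbiasedness is used, so the inequality holds pathwise as claimed.
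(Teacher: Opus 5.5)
Your proposal is correct and matches the paper's proof. The paper also expands the $v$-quadratic part of $H_{z_{t+1}}$ (quadratic coefficient $1/L+s/(256L^2)$) under $v_{t+1}=v_{t+1}^{0}+kc_{y,t}$ to get the exact identity $\mathcal V_{t+1}-\mathcal V(z_{t+1},w_{t+1}^{0})=\Lambda_t$, then adds the pathwise bound of Lemma~\ref{lem:virtual-joint}; your term-by-term expansion just writes out the bookkeeping the paper compresses into one sentence.
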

	\begin{proof}
		For fixed $z,x,y,\xi,n$, the function $H_z(w)$ is quadratic in $v$, with
		quadratic coefficient $1/L+s/(256L^2)$.  Substitute the realized third-batch
		identity $v_{t+1}=v_{t+1}^{0}+kc_{y,t}$ from~\eqref{eq:virtual-noise} and
		expand the square.  This gives the exact identity
		\begin{equation}\label{eq:noise-expansion}
			\mathcal V_{t+1}-\mathcal V(z_{t+1},w_{t+1}^{0})
			=\Lambda_t(\Omega_t^{(2)}).
		\end{equation}
		Adding this identity to the sample-path virtual descent
		\eqref{eq:virtual-joint} proves~\eqref{eq:spde-sample-path-descent}.
	\end{proof}
	
	\begin{theorem}\label{thm:drift}
		Suppose that Assumptions~\ref{ass:model} and~\ref{ass:oracle} hold. Then
		Algorithm~\ref{alg:stochastic}, with the parameter choices
		in~\eqref{eq:step-damping} and~\eqref{eq:center-relaxation}, satisfies
		\begin{equation}\label{eq:drift}
			\E[\mathcal V_{t+1}\mid\mathcal F_t]
			\le\mathcal V_t-\frac12\E[\mathcal D_t^{0}\mid\mathcal F_t]
			-\frac\beta{8L}\norm{\nabla p(z_t)}^2
			+\frac{300\sigma^2}{Lm}.
		\end{equation}
	\end{theorem}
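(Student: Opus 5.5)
The plan is to take conditional expectations of the sample-path inequality \eqref{eq:spde-sample-path-descent} from Proposition~\ref{prop:spde-sample-path}, peeling off the three batches in reverse order by the tower property over $\mathcal F_t\subseteq\mathcal F_t^{(1)}\subseteq\mathcal F_t^{(2)}$. All terms in that inequality are integrable by Lemma~\ref{lem:feasibility}. The quantities $\mathcal V_t$ and $\norm{\nabla p(z_t)}^2$ are $\mathcal F_t$-measurable and pass through unchanged. The term $\mathcal D_t^{0}$ depends only on $w_t$, $z_t$ and the virtual endpoint $w_{t+1}^{0}$. It is therefore $\mathcal F_t^{(2)}$-measurable, and it simply becomes $\E[\mathcal D_t^{0}\mid\mathcal F_t]$. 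It then remains to bound $\E[\Lambda_t\mid\mathcal F_t]$ and $\frac{17}{L}\E[U_t^2\mid\mathcal F_t]$ by multiples of $\sigma^2/m$.

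\textbf{Endpoint error $\Lambda_t$.} First condition on $\mathcal F_t^{(2)}$. In \eqref{eq:sample-endpoint-error}, each linear term pairs $c_{y,t}$ with a vector built from $x_{t+1},y_{t+1},n_{t+1},v_{t+1}^{0},z_{t+1}$ and $y^\star(z_{t+1})$. All of these are $\mathcal F_t^{(2)}$-measurable; this is exactly why $z_{t+1}$ was checked not to depend on the third batch, and why the virtual momentum was introduced. Since $\E[c_{y,t}\mid\mathcal F_t^{(2)}]=0$ by \eqref{eq:conditional-noise}, the three inner products vanish in conditional expectation. To justify this I would use the square-integrability from Lemma~\ref{lem:feasibility} and the fact that $y^\star(z_{t+1})\in Y$ is bounded. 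The quadratic term contributes at most $\bigl(\tfrac1L+\tfrac{s}{256L^2}\bigr)k^2\sigma^2/m$. Using $k\le hL/2\le 1/384$ and $s\le 2L$, this is a tiny multiple of $\sigma^2/(Lm)$.

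\textbf{Predictor/corrector errors $U_t$.} Next, $U_t^2=(2d\norm{a_t}+2\norm{b_t})^2\le 2(4d^2\norm{a_t}^2+4\norm{b_t}^2)$. For the $b_t$ term, condition on $\mathcal F_t^{(1)}$ to get $\E[\norm{b_t}^2\mid\mathcal F_t^{(1)}]\le\sigma^2/m$, then apply the tower property to reach $\mathcal F_t$. The $a_t$ term is bounded directly given $\mathcal F_t$. With $d=1/64$, this yields $\E[U_t^2\mid\mathcal F_t]\le(8d^2+8)\sigma^2/m<8.01\,\sigma^2/m$. Hence $\frac{17}{L}\E[U_t^2\mid\mathcal F_t]\le 137\sigma^2/(Lm)$. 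Adding the $\Lambda_t$ contribution gives a total comfortably below $300\sigma^2/(Lm)$, which proves \eqref{eq:drift}.

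The only delicate point is the measurability bookkeeping for the cross terms in $\Lambda_t$. One must make sure that every factor multiplying $c_{y,t}$ is fixed before $\Omega_t^{(2)}$ is drawn, including the center-dependent quantity $y^\star(z_{t+1})$ and the gradient at $z_{t+1}$. The constant itself is loose, so no sharp arithmetic is needed.
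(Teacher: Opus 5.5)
Your proposal is correct and matches the paper's proof essentially step for step. You condition on $\mathcal F_t^{(2)}$ to eliminate the three linear $c_{y,t}$ terms of $\Lambda_t$, leaving a quadratic term of order $\sigma^2/(Lm)$; you then bound $U_t^2\le 8d^2\norm{a_t}^2+8\norm{b_t}^2$ and apply the tower property to get the $136(1+d^2)\sigma^2/(Lm)$ contribution, for a total below $300\sigma^2/(Lm)$. The only difference from the paper is cosmetic: you use the sharper bound $k\le 1/384$ on the quadratic endpoint term, where the paper settles for $k\le 1$.
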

	\begin{proof}
		We now take conditional expectations in the sample-path inequality
		\eqref{eq:spde-sample-path-descent}.  This is the first point in the descent
		argument at which an expectation is taken.
		In the first three inner products, all factors other than $c_{y,t}$ are $\mathcal F_t^{(2)}$-measurable.
		Thus,~\eqref{eq:conditional-noise} gives the exact conditional expectation identity
		\begin{equation}\label{eq:noise-expectation}
			\begin{aligned}
				&\E[\Lambda_t(\Omega_t^{(2)})\mid\mathcal F_t^{(2)}]\\
				&=\left(\frac1L+\frac{s}{256L^2}\right)k^2
				\E[\norm{c_{y,t}}^2\mid\mathcal F_t^{(2)}]
				\le\frac{2\sigma^2}{Lm}.
			\end{aligned}
		\end{equation}
		The last step uses $s\le2L$ and $k\le1$; this loose constant suffices.
		
		For the first and second batches, we do not invoke unbiased cancellation involving subsequent iterates.
		Since $U_t=2d\norm{a_t}+2\norm{b_t}$, for every sample realization we have
		\[
		U_t^2\le8d^2\norm{a_t}^2+8\norm{b_t}^2.
		\]
		Taking conditional expectations and using the tower property gives
		\begin{equation}\label{eq:U-expectation}
			\frac{17}{L}\E[U_t^2\mid\mathcal F_t]
			\le\frac{136(1+d^2)\sigma^2}{Lm}.
		\end{equation}
		Substitute~\eqref{eq:noise-expectation} and~\eqref{eq:U-expectation} into
		the conditional expectation of~\eqref{eq:spde-sample-path-descent}. Since
		$136(1+1/4096)+2<300$, this proves~\eqref{eq:drift}.
		Lemma~\ref{lem:feasibility}, the compactness of $Y$, and the at most quadratic growth of $p,G$ ensure
		the integrability of these random quantities, so the conditional expectations and the tower property are well defined.
	\end{proof}
	\subsection{Stationarity and complexity results}
	
	The descent inequality controls regularized residuals along the trajectory.
	The following budget lemma averages those residuals, removes the artificial
	dual regularization, and converts the result into the original criterion
	defined in~\eqref{eq:residual}.
	
	\begin{lemma}\label{lem:budget}
		Suppose that Assumptions~\ref{ass:model} and~\ref{ass:oracle} hold, and let
		the iterates be generated by Algorithm~\ref{alg:stochastic} with the parameter
		choices in~\eqref{eq:step-damping} and~\eqref{eq:center-relaxation}. Let
		\begin{equation}\label{eq:M}
			M_T:=B+\frac{300T\sigma^2}{Lm}.
		\end{equation}
		Then
		\begin{equation}\label{eq:sum-D}
			\sum_{t=0}^{T-1}\left[\frac12\E\mathcal D_t^{0}
			+\frac\beta{8L}\E\norm{\nabla p(z_t)}^2\right]\le M_T.
		\end{equation}
		The random output satisfies
		\begin{equation}\label{eq:R-bound}
			\E\mathcal R(x_{J+1},y_{J+1})^2
			\le\frac{64LB}{\beta T}+\frac{19200\sigma^2}{\beta m}
			+2\tau^2D_Y^2.
		\end{equation}
	\end{lemma}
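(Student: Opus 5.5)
The plan is to telescope the conditional drift inequality of Theorem~\ref{thm:drift} and then convert the dissipation and center-gradient terms into the original residual~\eqref{eq:residual}. First, take total expectations in~\eqref{eq:drift}, using the tower property; integrability is guaranteed by Lemma~\ref{lem:feasibility}. Summing over $t=0,\ldots,T-1$ gives
\[
\sum_{t=0}^{T-1}\Bigl[\tfrac12\E\mathcal D_t^{0}+\tfrac{\beta}{8L}\E\norm{\nabla p(z_t)}^2\Bigr]
\le \E\mathcal V_0-\E\mathcal V_T+\frac{300T\sigma^2}{Lm}.
\]
By Lemma~\ref{lem:positive} we have $\mathcal V_T\ge0$, and by Lemma~\ref{lem:initial} we have $\mathcal V_0\le B$ deterministically. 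This yields~\eqref{eq:sum-D} at once.

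For~\eqref{eq:R-bound}, we bound the residual at $(x_{t+1},y_{t+1})$ pathwise by terms appearing in $\mathcal D_t^{0}$ and $\norm{\nabla p(z_t)}^2$. Lemma~\ref{lem:feasibility} gives $\xi_{t+1}\in N_X(x_{t+1})$ and $n_{t+1}\in N_Y(y_{t+1})$, so these vectors are admissible in the distances. Since $\nabla_xG(x,y;z)=\nabla_xF(x,y)+2L(x-z)$ and $x_{t+1}-z_t=X_t^+-\nabla p(z_t)/(2L)$ by~\eqref{eq:p-gradient}, the primal part satisfies
\[
\dist^2\bigl(0,\nabla_xF+N_X\bigr)\le\norm{P_t^+-2LX_t^+ +\nabla p(z_t)}^2
\le 3\norm{P_t^+}^2+12L^2\norm{X_t^+}^2+3\norm{\nabla p(z_t)}^2.
\]
For the dual part, $-\nabla_yF+n_{t+1}=Q_t^++v_{t+1}^{0}-\tau y_{t+1}$, where $Q_t^+$ is taken with the virtual momentum. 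Using $\norm{y_{t+1}}\le D_Y$ and splitting with $(a+b)^2\le2a^2+2b^2$, so as to isolate exactly $2\tau^2D_Y^2$, gives
\[
\dist^2\bigl(0,-\nabla_yF+N_Y\bigr)\le 4\norm{Q_t^+}^2+4\norm{v_{t+1}^{0}}^2+2\tau^2D_Y^2.
\]

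The key step is then a coefficient comparison against~\eqref{fc:dissipation}. We need to show
\[
3\norm{P_t^+}^2+12L^2\norm{X_t^+}^2+4\norm{Q_t^+}^2+4\norm{v_{t+1}^0}^2
\le\frac{64L}{\beta}\cdot\frac12\mathcal D_t^{0},
\qquad
3\norm{\nabla p(z_t)}^2\le\frac{64L}{\beta}\cdot\frac{\beta}{8L}\norm{\nabla p(z_t)}^2 .
\]
The second inequality is immediate, since the right-hand side equals $8\norm{\nabla p(z_t)}^2$. For the first, substitute $\beta=hs/4096$, so that $32L/\beta=131072L/(hs)$. Then check each term using $s\le2L$, $hL\le1/192$, and $16L-s\ge14L$:
\begin{itemize}
\item $\frac{3hs}{512L}\norm{Q^+}^2$ contributes $768\norm{Q^+}^2$;
\item $\frac{7hs}{4096L}\norm{v^+}^2$ contributes $224\norm{v^+}^2$;
\item $\frac{hs(16L-s)}{1024}\norm{X^+}^2$ contributes at least $1792L^2\norm{X^+}^2$;
\item $\frac h4\norm{P^+}^2$ contributes $32768L/s\ge16384$ times $\norm{P^+}^2$.
\end{itemize}
All of these dominate the required constants $3,12,4,4$. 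This comparison is the main point needing care, but it carries large slack.

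Finally, since $J$ is uniform on $\{0,\ldots,T-1\}$ and independent of the oracle samples, we have $\E\mathcal R(x_{J+1},y_{J+1})^2=\frac1T\sum_t\E\mathcal R(x_{t+1},y_{t+1})^2$. Combining the pathwise bound with~\eqref{eq:sum-D} gives at most $\frac{64L}{\beta T}M_T+2\tau^2D_Y^2$. Expanding $M_T$ yields $\frac{64LB}{\beta T}+\frac{19200\sigma^2}{\beta m}+2\tau^2D_Y^2$, which is~\eqref{eq:R-bound}.
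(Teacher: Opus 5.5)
Your proposal is correct and follows essentially the same route as the paper. You telescope the drift inequality of Theorem~\ref{thm:drift} using $\mathcal V_T\ge0$ and $\mathcal V_0\le B$, decompose the residual at $(x_{t+1},y_{t+1})$ into $P_t^+$, $X_t^+$, $\nabla p(z_t)$, $Q_t^+$, $v_{t+1}^0$ and the bias $\tau y_{t+1}$, and dominate these by the dissipation; the differences are cosmetic. You compare coefficients pathwise against $\frac{64L}{\beta}\bigl[\frac12\mathcal D_t^0+\frac{\beta}{8L}\norm{\nabla p(z_t)}^2\bigr]$ rather than first forming the separate summed component budgets and the certificate $S_t$, and you split off $\tau y_{t+1}$ only in the dual block rather than via $\mathcal R^2\le 2S_t+2\tau^2D_Y^2$, yet both routes land on the same stated constants.
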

	\begin{proof}
		Taking total expectations in~\eqref{eq:drift} and summing from $t=0$ to $T-1$,
		we obtain~\eqref{eq:sum-D} from $\mathcal V_T\ge0$ and $\mathcal V_0\le B$.
		Write
		\begin{align*}
			P_t^+&=\nabla_xG(x_{t+1},y_{t+1};z_t)+\xi_{t+1},\\
			Q_t^+&=-\nabla_yG(x_{t+1},y_{t+1};z_t)+n_{t+1}-v_{t+1}^{0},\\
			X_t^+&=x_{t+1}-x^\star(z_t).
		\end{align*}
		Retaining the squared terms in $\mathcal D_t^{0}$ individually and using
		$hs=4096\beta$ and $16L-s\ge14L$, we obtain
		\begin{equation}\label{eq:component-budgets}
			\begin{gathered}
				\sum_{t<T}\E\norm{P_t^+}^2\le\frac{8M_T}{h},\qquad
				\sum_{t<T}\E\norm{Q_t^+}^2\le\frac{LM_T}{12\beta},\qquad
				\sum_{t<T}\E\norm{v_{t+1}^{0}}^2\le\frac{2LM_T}{7\beta},\\
				\sum_{t<T}\E\norm{X_t^+}^2\le\frac{M_T}{28\beta L},\qquad
				\sum_{t<T}\E\norm{\nabla p(z_t)}^2\le\frac{8LM_T}{\beta}.
			\end{gathered}
		\end{equation}
		Define the following true-gradient certificate solely for the analysis:
		\begin{equation}\label{eq:S}
			S_t:=\norm{\nabla_x F(x_{t+1},y_{t+1})+\xi_{t+1}}^2
			+\norm{-\nabla_y F(x_{t+1},y_{t+1})+\tau y_{t+1}+n_{t+1}}^2.
		\end{equation}
		The certificate $S_t$ contains neither the actual nor the virtual momentum.
		By~\eqref{eq:p-gradient},
		\begin{align*}
			\nabla_x F(x_{t+1},y_{t+1})+\xi_{t+1}
			&=P_t^+-2LX_t^++\nabla p(z_t),\\
			-\nabla_y F(x_{t+1},y_{t+1})+\tau y_{t+1}+n_{t+1}
			&=Q_t^++v_{t+1}^{0}.
		\end{align*}
		Hence
		\[
		S_t\le3\norm{P_t^+}^2+12L^2\norm{X_t^+}^2
		+3\norm{\nabla p(z_t)}^2+2\norm{Q_t^+}^2+2\norm{v_{t+1}^{0}}^2.
		\]
		Summing and substituting~\eqref{eq:component-budgets} gives
		\begin{align}
			\sum_{t<T}\E S_t
			&\le\left(\frac{24\beta}{hL}+\frac37+24+\frac16+\frac47\right)
			\frac{LM_T}{\beta}
			\le\frac{32LM_T}{\beta}.\label{eq:sum-S}
		\end{align}
		The last inequality uses $\beta/(hL)=s/(4096L)\le1/2048$.
		
		Normal-cone membership and the triangle inequality give, pathwise,
		$\mathcal R(x_{t+1},y_{t+1})\le\sqrt{S_t}+\tau\norm{y_{t+1}}$, and thus
		\[
		\mathcal R(x_{t+1},y_{t+1})^2\le2S_t+2\tau^2D_Y^2.
		\]
		The independent uniform index $J$ converts the finite average into the output expectation.
		Substituting~\eqref{eq:sum-S} and~\eqref{eq:M} yields~\eqref{eq:R-bound}.
	\end{proof}
	
	\subsubsection{Nonconvex--strongly concave setting}\label{sec:plain-ncsc}
	
	We first specialize the mini-batch analysis to the case in which the
	original inner objective is strongly concave. This setting does not require
	artificial dual regularization and therefore has no regularization-bias term.
	
	\begin{assumption}\label{ass:strong-concavity}
		There exists $\mu>0$ such that, for every $x\in X$ and $y,y'\in Y$,
		\begin{equation}\label{eq:strong-concavity}
			F(x,y')\le F(x,y)+\ip{\nabla_y F(x,y)}{y'-y}
			-\frac\mu2\norm{y'-y}^2.
		\end{equation}
		Define the condition number $\kappa:=L/\mu$. The joint $L$-smoothness in
		Assumption~\ref{ass:model} implies $0<\mu\le L$ and hence $\kappa\ge1$.
	\end{assumption}
	
	Define the unregularized proximal saddle function and its value by
	\begin{equation}\label{eq:sc-saddle}
		G_{\rm sc}(x,y;z):=F(x,y)+L\norm{x-z}^2,
		\qquad
		p_{\rm sc}(z):=\min_{x\in X}\max_{y\in Y}G_{\rm sc}(x,y;z).
	\end{equation}
	The function $G_{\rm sc}$ is $L$-strongly convex in $x$, $\mu$-strongly
	concave in $y$, and has a $3L$-Lipschitz full gradient. The concrete
	strongly-concave specialization is stated in the first result that uses it.
	
	\begin{lemma}\label{lem:sc-plain-residual}
		Suppose that Assumptions~\ref{ass:model},~\ref{ass:oracle}, and
		\ref{ass:strong-concavity} hold. Set
		\begin{equation}\label{eq:sc-parameters}
			\begin{gathered}
				L_{G,{\rm sc}}:=3L,\qquad s_{\rm sc}:=\sqrt{2L\mu},\qquad
				h_{\rm sc}:=\frac1{64L_{G,{\rm sc}}}=\frac1{192L},\\
				\alpha_{\rm sc}:=\left(1+\frac{h_{\rm sc}s_{\rm sc}}{16}\right)^{-1},
				\qquad
				k_{\rm sc}:=\frac{h_{\rm sc}(8L-s_{\rm sc})}{16+h_{\rm sc}s_{\rm sc}},\\
				\beta_{\rm sc}:=\frac{h_{\rm sc}s_{\rm sc}}{4096}
				=\frac{\sqrt2}{786432}\kappa^{-1/2}.
			\end{gathered}
		\end{equation}
		Run Algorithm~\ref{alg:stochastic} with every explicit dual-regularization
		term $-\tau y_t$, $-\tau\widetilde y_t$, or $-\tau y_{t+1}$ deleted and with
		the parameters in~\eqref{eq:sc-parameters}. This prescription specifies all
		updates and introduces neither an inner solve nor a restart. Then SPDE satisfies
		\begin{equation}\label{eq:sc-plain-residual}
			\E\mathcal R(x_{J+1},y_{J+1})^2
			\le \frac{64LB}{\beta_{\rm sc}T}
			+\frac{19200\sigma^2}{\beta_{\rm sc}m}.
		\end{equation}
	\end{lemma}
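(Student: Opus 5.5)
The plan is to rerun the entire concave analysis of Section~\ref{fc:section} with the substitutions $\tau\to\mu$ in every place where the modulus of dual strong concavity is used, and $\tau\to0$ in every place where the explicit regularizer $-\tfrac{\tau}{2}\norm{y}^2$ appears. The function $G$ is replaced by $G_{\rm sc}$, $p$ by $p_{\rm sc}$, and $s=\sqrt{2L\tau}$ by $s_{\rm sc}=\sqrt{2L\mu}$. We use the saddle point $(x^\star_{\rm sc}(z),y^\star_{\rm sc}(z))$ of $G_{\rm sc}(\cdot,\cdot;z)$. It exists and is unique by $L$-strong convexity in $x$, $\mu$-strong concavity in $y$, and compactness of $Y$. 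With these choices, the parameters in~\eqref{eq:sc-parameters} are exactly~\eqref{eq:step-damping} and~\eqref{eq:center-relaxation}, with $(\tau,L_G)$ replaced by $(\mu,3L)$.

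The first task is to check, lemma by lemma, which hypotheses the proofs really use.
\begin{itemize}
\item Lemma~\ref{lem:sensitivity} uses only strong monotonicity of $\mathcal T$, with constants $L$ and the dual modulus. It therefore gives $\norm{y^\star_{\rm sc}(z')-y^\star_{\rm sc}(z)}\le\sqrt{2L/\mu}\,\norm{z'-z}$ and the $6L$-smoothness of $p_{\rm sc}$ via $p_{\rm sc}(z)=\min_x\{\phi(x)+L\norm{x-z}^2\}$.
\item Lemma~\ref{lem:positive} uses only the growth inequalities~\eqref{eq:growth-x}--\eqref{eq:growth-y} with modulus $\mu$. It also needs $\inf p_{\rm sc}\ge\phi_{\inf}>-\infty$, which is now immediate.
\item Lemma~\ref{lem:initial} gives $\mathcal V_0\le B$ by the same computation with $\tau$ removed. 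Here $\mu$ enters only through $\tfrac{\mu}{256}\norm{y_0-y^\star}^2\le\tfrac{L}{64}D_Y^2$.
\item In Lemma~\ref{fc:inexact-dissipation} the Lipschitz constant becomes $L_G=3L$. The identities used are $s^2=2L\mu$ (in the cancellations of $\ip{Q^+}{Y^+}$ and in Young's inequality) and $\mu\le L$ (in place of $\tau\le L$). Both hold, since $\mu\le L$ by Assumption~\ref{ass:strong-concavity}.
\end{itemize}
Lemmas~\ref{lem:virtual-error}, \ref{lem:move}, \ref{lem:virtual-joint}, Proposition~\ref{prop:spde-sample-path}, and Theorem~\ref{thm:drift} then go through verbatim. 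The only change is that $\nabla_y G_{\rm sc}=\nabla_y F$, which matches the deleted-$\tau$ updates, and $s\sqrt{2L/\mu}=2L$ still holds. This yields the drift~\eqref{eq:drift} and hence the budget~\eqref{eq:sum-D} with the same $M_T$.

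The conversion to the residual is where the argument differs, and it is simpler. The certificate becomes
\[
S_t=\norm{\nabla_xF(x_{t+1},y_{t+1})+\xi_{t+1}}^2+\norm{-\nabla_yF(x_{t+1},y_{t+1})+n_{t+1}}^2 .
\]
Since no dual regularizer is present, normal-cone membership (Lemma~\ref{lem:feasibility}) gives $\mathcal R(x_{t+1},y_{t+1})^2\le S_t$ directly. This holds pathwise, with no factor $2$ and no $\tau^2D_Y^2$ bias. The decompositions $\nabla_xF+\xi_{t+1}=P_t^+-2LX_t^++\nabla p_{\rm sc}(z_t)$ and $-\nabla_yF+n_{t+1}=Q_t^++v^0_{t+1}$ are unchanged. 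The component budgets~\eqref{eq:component-budgets} therefore give $\sum_t\E S_t\le 32LM_T/\beta_{\rm sc}$, using $\beta_{\rm sc}/(h_{\rm sc}L)\le1/2048$. Averaging over the uniform index $J$ gives
\[
\E\mathcal R^2\le 32L M_T/(\beta_{\rm sc}T),
\]
and substituting $M_T$ gives the bound with constants $32LB/(\beta_{\rm sc}T)+9600\sigma^2/(\beta_{\rm sc}m)$. These are within~\eqref{eq:sc-plain-residual}.

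The expected obstacle is the bookkeeping in Lemma~\ref{fc:inexact-dissipation}, where $\tau$ plays two roles at once. It appears both as the strong-concavity modulus (via $s^2=2L\tau$) and as the regularizer inside $\nabla_yG$ and the constant $L_G=3L+\tau$. One must confirm that every occurrence of $\tau$ in the numerical margins~\eqref{fc:numerical-margins} and~\eqref{fc:increment-coefficients} is of the first kind, or is bounded using $\tau\le L$, which is now $\mu\le L$. The only $L_G$-dependence is through $d=hL_G=1/64$, which is preserved by $h_{\rm sc}=1/(192L)$. I would verify this by a line-by-line pass, treating $\mu$ as the dual modulus and setting the regularizer to zero in $\mathcal T$ and in the momentum recursion.
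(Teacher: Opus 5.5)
Your proposal is correct and follows the paper's own argument. Like you, the paper proves this lemma by pushing the substitution $(G,p,s,\tau,h,\beta)\to(G_{\rm sc},p_{\rm sc},s_{\rm sc},\mu,h_{\rm sc},\beta_{\rm sc})$ through the concave analysis, reuses the same drift constants and the same initial bound $B$, and observes that the certificate $-\nabla_yF(x_{t+1},y_{t+1})+n_{t+1}$ converts to $\mathcal R$ with no bias term.

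The only difference is cosmetic. You use $\mathcal R(x_{t+1},y_{t+1})^2\le S_t$ directly, which gives the sharper bound $32LB/(\beta_{\rm sc}T)+9600\sigma^2/(\beta_{\rm sc}m)$. The paper instead keeps the factor $2$ from $\mathcal R^2\le 2S_t+2\tau^2D_Y^2$ and drops the bias term, which yields the stated, looser bound.
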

	\begin{proof}
		We give the exact substitution map from the preceding mini-batch analysis.
		Let $\mathcal V^{\rm sc}$ denote the potential in~\eqref{eq:energy} after
		replacing $G$ and $p$ by $G_{\rm sc}$ and $p_{\rm sc}$, deleting every
		explicit dual-regularization gradient, and replacing the strong-concavity
		parameter in the analytical coefficients by $\mu$. In particular, the
		sensitivity factor becomes $\sqrt{2L/\mu}$, the potential uses
		$s_{\rm sc}\ip{v}{y-y^\star(z)}/(16L)$ and
		$\mu\norm{y-y^\star(z)}^2/256$, and the dissipation uses the same formulas
		with $(s,\tau,h,\beta)$ replaced by
		$(s_{\rm sc},\mu,h_{\rm sc},\beta_{\rm sc})$.
		
		Assumption~\ref{ass:strong-concavity} and~\eqref{eq:sc-parameters} give
		\[
		0<\mu\le L,\qquad s_{\rm sc}\le2L,\qquad
		h_{\rm sc}L_{G,{\rm sc}}=\frac1{64},\qquad
		\beta_{\rm sc}=\frac{h_{\rm sc}s_{\rm sc}}{4096}.
		\]
		These are precisely the scalar relations used in the fixed-center,
		center-movement, and mini-batch error estimates. Strong concavity
		\eqref{eq:strong-concavity} supplies every dual quadratic-growth term,
		while the algorithmic dual gradients are those of $G_{\rm sc}$ and contain
		no regularization term. Thus the proof of Theorem~\ref{thm:drift} gives the
		corresponding strongly-concave drift inequality with the same constants.
		
		The initial potential is bounded by the same $B$ in~\eqref{eq:B}. Indeed,
		$z_0=x_0$ and the unregularized saddle function give
		\begin{align*}
			\mathcal V_0^{\rm sc}
			&\le \phi(x_0)-\phi_{\inf}
			+2D_Y\norm{\nabla_y F(x_0,y_0)}
			+\frac{\norm{\nabla_x F(x_0,y_0)}^2
				+\norm{\nabla_y F(x_0,y_0)}^2}{L}
			+\frac{\mu D_Y^2}{64}\\
			&\le B,
		\end{align*}
		where the first inequality follows from concavity and the radius bound on $Y$,
		and the second uses $\mu\le L$ and~\eqref{eq:B}. Finally, the analytical
		certificate contains $-\nabla_y F(x_{t+1},y_{t+1})+n_{t+1}$ itself. Its
		conversion to $\mathcal R$ is exact, so the term $2\tau^2D_Y^2$ in
		\eqref{eq:R-bound} is absent. The remaining two terms give
		\eqref{eq:sc-plain-residual}.
	\end{proof}
	
	\begin{theorem}
		\label{thm:sc-plain-complexity}
		Suppose that Assumptions~\ref{ass:model},~\ref{ass:oracle}, and
		\ref{ass:strong-concavity} hold. Fix a known bound $\overline B\ge B$ and set
		\begin{equation}\label{eq:sc-plain-budgets}
			T=\max\left\{1,\left\lceil
			\frac{128L\overline B}{\beta_{\rm sc}\eps^2}\right\rceil\right\},
			\qquad
			m=\max\left\{1,\left\lceil
			\frac{38400\sigma^2}{\beta_{\rm sc}\eps^2}\right\rceil\right\}.
		\end{equation}
		Then SPDE returns a point satisfying~\eqref{eq:goal}, and
		\begin{align}
			N_{\rm sc}
			&\le3\left(1+\frac{128L\overline B}{\beta_{\rm sc}\eps^2}\right)
			\left(1+\frac{38400\sigma^2}{\beta_{\rm sc}\eps^2}\right)
			\nonumber\\
			&=O\!\left(1+
			\frac{(L\overline B+\sigma^2)\sqrt\kappa}{\eps^2}
			+\frac{L\overline B\sigma^2\kappa}{\eps^4}\right).
			\label{eq:sc-plain-oracle}
		\end{align}
		For fixed $L,\overline B$ and $\sigma>0$, this gives
		\begin{equation}\label{eq:sc-plain-orders}
			T=O(\sqrt\kappa\eps^{-2}),\qquad
			m=O(\sqrt\kappa\eps^{-2}),\qquad
			N_{\rm sc}=O(\kappa\eps^{-4}).
		\end{equation}
		If $\sigma=0$, then $m=1$ and $N_{\rm sc}=O(\sqrt\kappa\eps^{-2})$.
	\end{theorem}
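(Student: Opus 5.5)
The plan is to plug the budgets \eqref{eq:sc-plain-budgets} directly into Lemma~\ref{lem:sc-plain-residual} and then count oracle calls. Since $\overline B\ge B$ and $T\ge 128L\overline B/(\beta_{\rm sc}\eps^2)$, the first term of \eqref{eq:sc-plain-residual} satisfies
\[
\frac{64LB}{\beta_{\rm sc}T}\le\frac{64L\overline B}{\beta_{\rm sc}}\cdot\frac{\beta_{\rm sc}\eps^2}{128L\overline B}=\frac{\eps^2}{2}.
\]
Similarly, $m\ge 38400\sigma^2/(\beta_{\rm sc}\eps^2)$ gives $19200\sigma^2/(\beta_{\rm sc}m)\le\eps^2/2$. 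When $\sigma=0$ this term vanishes, and $m=1$ is admissible. Summing the two bounds yields $\E\mathcal R(x_{J+1},y_{J+1})^2\le\eps^2$, which is \eqref{eq:goal}. Feasibility of the output follows from Lemma~\ref{lem:feasibility}.

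For the oracle count, the oracle-accounting paragraph shows that SPDE uses exactly $3mT$ SFO calls. The deletion of the $\tau$-terms in Lemma~\ref{lem:sc-plain-residual} does not change the three batches per iteration. Using $\max\{1,\lceil a\rceil\}\le 1+a$ for $a\ge0$, we get
\[
N_{\rm sc}=3mT\le3\Bigl(1+\frac{128L\overline B}{\beta_{\rm sc}\eps^2}\Bigr)\Bigl(1+\frac{38400\sigma^2}{\beta_{\rm sc}\eps^2}\Bigr).
\]
Expanding the product gives four terms: the constant $1$, two linear terms proportional to $(L\overline B+\sigma^2)/(\beta_{\rm sc}\eps^2)$, and a cross term proportional to $L\overline B\sigma^2/(\beta_{\rm sc}^2\eps^4)$. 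The explicit formula $\beta_{\rm sc}=\frac{\sqrt2}{786432}\kappa^{-1/2}$ from \eqref{eq:sc-parameters} gives $1/\beta_{\rm sc}=O(\sqrt\kappa)$ and $1/\beta_{\rm sc}^2=O(\kappa)$. Substituting these yields the $O(\cdot)$ expression in \eqref{eq:sc-plain-oracle}.

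For \eqref{eq:sc-plain-orders}, fix $L,\overline B$ and $\sigma>0$. Then $T$ and $m$ are each $O(1+\sqrt\kappa\eps^{-2})=O(\sqrt\kappa\eps^{-2})$, since $\kappa\ge1$ and we take $\eps$ small in the asymptotic regime. The dominant term of $N_{\rm sc}$ is $\kappa\eps^{-4}$. If $\sigma=0$, then $m=1$ and $N_{\rm sc}=3T=O(\sqrt\kappa\eps^{-2})$.

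There is no real obstacle, since Lemma~\ref{lem:sc-plain-residual} does all the analytic work. The only points needing care are using $\overline B$ in place of the possibly unknown $B$, and handling the ceiling and max operations. One should also note that $\eps$ is implicitly bounded (say $\eps\le1$) when absorbing the constant $1$ into the stated orders.
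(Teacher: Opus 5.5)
Your proposal is correct and follows the paper's proof: you substitute the budgets into Lemma~\ref{lem:sc-plain-residual} to make each of the two error terms at most $\eps^2/2$, then bound $3mT$ using $\max\{1,\lceil a\rceil\}\le1+a$ and $\beta_{\rm sc}^{-1}=(786432/\sqrt2)\sqrt\kappa$. Your extra remarks on output feasibility and on absorbing the constant $1$ (using $\kappa\ge1$ and small $\eps$) are harmless refinements of the same argument.
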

	\begin{proof}
		Substituting~\eqref{eq:sc-plain-budgets} into
		\eqref{eq:sc-plain-residual} gives
		\[
		\frac{64LB}{\beta_{\rm sc}T}\le\frac{\eps^2}{2},
		\qquad
		\frac{19200\sigma^2}{\beta_{\rm sc}m}\le\frac{\eps^2}{2}.
		\]
		Hence~\eqref{eq:goal} holds. The three batches per iteration give the first
		line of~\eqref{eq:sc-plain-oracle}. The identity
		$\beta_{\rm sc}^{-1}=(786432/\sqrt2)\sqrt\kappa$ from
		\eqref{eq:sc-parameters} yields its second line and
		\eqref{eq:sc-plain-orders}.
	\end{proof}
	
	Theorem~\ref{thm:sc-plain-complexity} shows that strong concavity removes the
	regularization bias and fixes the center timescale at
	$\beta_{\rm sc}=\Theta(\kappa^{-1/2})$. The resulting product of the iteration
	and batch orders gives the $O(\kappa\eps^{-4})$ stochastic-oracle bound.
	
	\subsubsection{Nonconvex--concave setting}\label{sec:plain-ncc}
	
	Without strong concavity, we set the auxiliary dual regularization at the
	accuracy-dependent scale $\tau=O(\eps)$. This makes its contribution to the
	original game-stationarity residual at most order $\eps$, while slowing the
	center by the factor $\beta=\Theta(\eps^{1/2})$.
	
	\begin{theorem}\label{thm:complexity}
		Suppose that Assumptions~\ref{ass:model} and~\ref{ass:oracle} hold. Choose a known upper bound
		independent of $\eps,\tau$ satisfying $\overline B\ge B$, where $B$ is defined in~\eqref{eq:B}.
		Given $\eps>0$, set
		\begin{equation}\label{eq:tau-choice}
			\tau=\min\left\{L,\frac{\eps}{2D_Y}\right\},
		\end{equation}
		choose $h,\alpha,k$ by~\eqref{eq:step-damping} and $\beta$
		by~\eqref{eq:center-relaxation}, and set
		\begin{equation}\label{eq:budgets}
			T=\max\left\{1,\left\lceil\frac{256L\overline B}{\beta\eps^2}\right\rceil\right\},
			\qquad
			m=\max\left\{1,\left\lceil\frac{76800\sigma^2}{\beta\eps^2}\right\rceil\right\}.
		\end{equation}
		Then the output of Algorithm~\ref{alg:stochastic} satisfies~\eqref{eq:goal}.
		The number $N$ of stochastic full-gradient oracle calls satisfies the explicit bound
		\begin{equation}\label{eq:exact-budget}
			N\le3mT\le3\left(1+\frac{256L\overline B}{\beta\eps^2}\right)
			\left(1+\frac{76800\sigma^2}{\beta\eps^2}\right).
		\end{equation}
		Let
		\begin{equation}\label{eq:Keps}
			K_\eps:=\max\left\{1,\sqrt{\frac{2LD_Y}{\eps}}\right\}.
		\end{equation}
		Then
		\begin{equation}\label{eq:full-complexity}
			N=O\!\left(
			1+\frac{(L\overline B+\sigma^2)K_\eps}{\eps^2}
			+\frac{L\overline B\sigma^2K_\eps^2}{\eps^4}\right).
		\end{equation}
		For a fixed problem, initialization, $\overline B$, and $\sigma>0$, as $\eps\downarrow0$,
		the iteration count is $T=O(\eps^{-5/2})$ and the batch size is
		$m=O(\eps^{-5/2})$, so $N=O(\eps^{-5})$.
	\end{theorem}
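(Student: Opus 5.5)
The plan is to apply Lemma~\ref{lem:budget} directly with the choice~\eqref{eq:tau-choice}, and then do the bookkeeping on $\beta$.

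First, check admissibility. Since $\tau\le L$, all hypotheses of Lemmas~\ref{lem:sensitivity}--\ref{lem:budget} hold. In addition, $\beta=hs/4096$ with $s=\sqrt{2L\tau}$ satisfies~\eqref{eq:center-relaxation}. Also, $B$ is independent of $\tau$, so $\overline B\ge B$ is a legitimate bound.

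Next, bound the three terms in~\eqref{eq:R-bound}. The bias term satisfies $2\tau^2D_Y^2\le 2(\eps/(2D_Y))^2D_Y^2=\eps^2/2$. With $T\ge 256L\overline B/(\beta\eps^2)$, the first term is at most $64LB\beta\eps^2/(256L\overline B\beta)\le\eps^2/4$. With $m\ge 76800\sigma^2/(\beta\eps^2)$, the second term is at most $\eps^2/4$. Summing gives $\E\mathcal R^2\le\eps^2$, which is~\eqref{eq:goal}. The oracle count $N\le 3mT$ comes from the oracle accounting paragraph. Using $\lceil a\rceil\le 1+a$ inside each max gives~\eqref{eq:exact-budget}.

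For the asymptotic form, compute $\beta^{-1}$ explicitly. We have $h=1/(64(3L+\tau))\ge 1/(256L)$, so
\[
\beta^{-1}=\frac{4096}{hs}\le\frac{4096\cdot256L}{\sqrt{2L\tau}}=O\!\left(\sqrt{L/\tau}\right).
\]
Split into the two cases of~\eqref{eq:tau-choice}. If $\tau=L$, then $\beta^{-1}=O(1)$. If $\tau=\eps/(2D_Y)$, then $\sqrt{L/\tau}=\sqrt{2LD_Y/\eps}$. In both cases $\beta^{-1}=O(K_\eps)$. Expanding the product in~\eqref{eq:exact-budget} then yields~\eqref{eq:full-complexity}. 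As $\eps\downarrow0$ we have $K_\eps=\Theta(\eps^{-1/2})$, so $T,m=O(\eps^{-5/2})$ and $N=O(\eps^{-5})$.

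The only step needing care is the uniformity of the constants in Lemma~\ref{lem:budget}. The numerical factors there, namely $\beta/(hL)\le1/2048$ and the constants in~\eqref{eq:component-budgets}, must not depend on $\tau$. I would verify this by rechecking that each of those constants uses only $s\le 2L$ and $\tau\le L$. With that confirmed, the rest is the arithmetic above.
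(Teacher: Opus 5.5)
Your proposal is correct and follows the paper's proof essentially step for step. You substitute the budgets and $\tau\le\eps/(2D_Y)$ into \eqref{eq:R-bound} to split $\eps^2$ as $\eps^2/4+\eps^2/4+\eps^2/2$, use $\max\{1,\lceil a\rceil\}\le1+a$ with three batches per iteration, and bound $\beta^{-1}\le(1048576/\sqrt2)\sqrt{L/\tau}$, where in fact $\sqrt{L/\tau}=K_\eps$ exactly.

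The uniformity check you flag needs no separate verification. Lemma~\ref{lem:budget} is already stated with $\tau$-free numerical constants for every $0<\tau\le L$. When $\sigma=0$, the variance term simply vanishes with $m=1$.
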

	\begin{proof}
		By~\eqref{eq:tau-choice}, $\tau D_Y\le\eps/2$ and $0<\tau\le L$.
		Substituting~\eqref{eq:budgets} into~\eqref{eq:R-bound}, the three error terms satisfy
		\begin{equation}\label{eq:three-errors}
			\frac{64LB}{\beta T}\le\frac{64L\overline B}{\beta T}\le\frac{\eps^2}{4},\qquad
			\frac{19200\sigma^2}{\beta m}\le\frac{\eps^2}{4},\qquad
			2\tau^2D_Y^2\le\frac{\eps^2}{2}.
		\end{equation}
		When $\sigma=0$, the second error term is zero and $m=1$; no division by $\sigma$ is needed.
		Adding the three bounds gives $\E\mathcal R^2\le\eps^2$.
		For any $a\ge0$, $\max\{1,\lceil a\rceil\}\le1+a$.
		Thus the three batch queries per iteration give~\eqref{eq:exact-budget}.
		
		By~\eqref{eq:step-damping} and~\eqref{eq:center-relaxation},
		\begin{align}
			\beta&=\frac{\sqrt{2L\tau}}{262144(3L+\tau)},
			\frac{786432}{\sqrt2}\sqrt{\frac L\tau}
			\le\frac1\beta\le\frac{1048576}{\sqrt2}\sqrt{\frac L\tau},\qquad
			\sqrt{\frac L\tau}=K_\eps.\label{eq:beta-rate}
		\end{align}
		Expanding~\eqref{eq:exact-budget} and substituting~\eqref{eq:beta-rate}
		yields~\eqref{eq:full-complexity}. In particular, for $0<\eps\le2LD_Y$,
		\begin{equation}\label{eq:small-eps}
			N=O\!\left(
			1+(L\overline B+\sigma^2)\sqrt{LD_Y}\,\eps^{-5/2}
			+L^2\overline B D_Y\sigma^2\eps^{-5}\right).
		\end{equation}
		This proves the stated sample complexity.
		The quantity $B$ is used only in the analysis; the budgets use the supplied
		$\overline B$ and require no additional true-gradient oracle calls to estimate $B$.
	\end{proof}
	
	\begin{remark}
		The theorem provides an upper bound for a general unbiased oracle with bounded variance,
		using a fixed batch size while retaining the single-loop structure.
		The $O(\eps^{-5})$ bound counts individual sample-gradient calls;
		the iteration count is $O(\eps^{-5/2})$.
		The guarantee concerns the expected squared game-stationarity residual of the original problem.
	\end{remark}
	
	\section{A variance-reduced SPDE method}\label{sec:vr}
	
	We develop a variance-reduced variant of SPDE, termed VR-SPDE,
	that preserves its single-loop structure.
	The method maintains one recursive gradient estimator along the
	entire sequence of current, predictor, and corrected query points.
	This estimator replaces the three independent mini-batch
	estimates in Algorithm~\ref{alg:stochastic}, while the projected
	steps, normal-cone corrections, dual momentum recursion, and
	proximal-center update retain the same formulas.
	Periodic refreshes update only the gradient estimator and require
	no inner optimization procedure.
	
	The recursive estimator introduces a distinct analytical
	difficulty.
	In the mini-batch analysis, the fresh endpoint estimate has
	conditionally mean-zero error, which eliminates the corresponding
	linear noise terms in~\eqref{eq:noise-expansion}.
	A recursive estimate generally retains the error from the
	preceding query, so this cancellation is no longer available.
	We instead control the endpoint error pathwise and bound the
	accumulated estimator variance through the motion of the query
	points.
	The resulting variance terms are then absorbed into the descent
	inequality.
	This approach yields the variance-reduced GS complexity guarantees
	in this section; the corresponding OS guarantees are established
	in Section~\ref{sec:os}.
	
	\subsection{Paired oracle and recursive estimator}
	
	Variance reduction requires access to stochastic gradient
	differences evaluated with a common sample.
	We impose the following additional oracle assumption.
	
	\begin{assumption}\label{ass:paired}
		In addition to Assumption~\ref{ass:oracle}, the oracle permits
		evaluating $\nabla f(u;\omega)$ and $\nabla f(u';\omega)$
		using the same sample $\omega$.
		There exists $\ell\ge L$ such that, for every pre-query
		sigma-algebra $\mathcal A$ and every pair of
		$\mathcal A$-measurable feasible points
		$u,u'\in X\times Y$, a fresh sample
		$\omega\sim\mathcal D$, independent of $\mathcal A$,
		satisfies
		\begin{equation}\label{eq:paired-smoothness}
			\E\!\left[
			\norm{\nabla f(u;\omega)-\nabla f(u';\omega)}^2
			\,\middle|\,\mathcal A
			\right]
			\le\ell^2\norm{u-u'}^2.
		\end{equation}
		Samples within each newly drawn batch are independent,
		and each batch is independent of the information available
		before it is drawn.
		Each sample-gradient evaluation at one point counts as one
		SFO call; evaluating a paired difference therefore costs
		two SFO calls.
	\end{assumption}
	
	Write $\chi:=\ell/L\ge1$.
	Assumption~\ref{ass:paired} controls stochastic gradient
	differences in mean square.
	It is stronger than Lipschitz continuity of the population
	gradient and is used only for VR-SPDE.
	It does not require every sample loss to have a uniformly
	Lipschitz gradient or to be concave in the dual variable.
	
	The estimator follows the SPIDER
	principle~\cite{fang2018spider}.
	Its use here requires accounting for an adaptive query sequence:
	each new point may depend on the estimates produced at earlier
	requests.
	
	\paragraph{A recursive estimator along the query sequence.}
	Index the estimator requests by $j=0,\ldots,3T-1$ and define
	\begin{equation}\label{eq:query-stream}
		u_{3t}=(x_t,y_t),
		\qquad
		u_{3t+1}=(\widetilde x_t,\widetilde y_t),
		\qquad
		u_{3t+2}=(x_{t+1},y_{t+1}),
		\qquad 0\le t<T.
	\end{equation}
	These points are generated sequentially by the algorithm.
	For integers $q,B_{\rm r},b\ge1$, set
	\begin{equation}\label{eq:spider-estimator}
		\mathsf g_j=
		\begin{cases}
			\displaystyle
			\frac{1}{B_{\rm r}}
			\sum_{i=1}^{B_{\rm r}}\nabla f(u_j;\omega_{j,i}),
			& j\equiv0\pmod q,\\[7pt]
			\displaystyle
			\mathsf g_{j-1}
			+\frac{1}{b}\sum_{i=1}^{b}
			\bigl[
			\nabla f(u_j;\omega_{j,i})
			-\nabla f(u_{j-1};\omega_{j,i})
			\bigr],
			& j\not\equiv0\pmod q.
		\end{cases}
	\end{equation}
	At each request, the samples appearing in
	\eqref{eq:spider-estimator} are drawn afresh after the query
	points have been determined.
	Within each paired difference, the two gradients use the same
	sample.
	
	Here $B_{\rm r}$ is the refresh batch size, $b$ is the
	difference batch size, and $q$ is the refresh period measured
	in estimator requests, rather than algorithmic iterations.
	In particular, the initial estimate $\mathsf g_0$ is always
	a refresh estimate.
	The same recursion serves all three query locations; no separate
	estimator is maintained for the predictor or endpoint.
	
	For $t\ge1$, the consecutive requests at the boundary between
	iterations satisfy
	\[
	u_{3t}=u_{3t-1}=(x_t,y_t).
	\]
	If request $3t$ is not a refresh, every paired difference in
	\eqref{eq:spider-estimator} is then identically zero.
	The update can therefore be implemented exactly by setting
	$\mathsf g_{3t}=\mathsf g_{3t-1}$ without additional oracle calls.
	If request $3t$ is a refresh, the prescribed refresh batch is
	still drawn.
	
	\paragraph{The VR-SPDE algorithm.}
	Algorithm~\ref{alg:vr} inserts the recursive estimator into
	the SPDE updates.
	The parameters $q,B_{\rm r},b$ govern estimator refresh and
	accuracy; the parameters $\tau,h,\alpha,k,\beta$ retain their
	roles in Algorithm~\ref{alg:stochastic}.
	Their choices for the two problem settings are specified in
	the complexity results below.
	
	\begin{algorithm}[!ht]
		\caption{Variance-reduced stochastic projected damped
			extragradient (VR-SPDE)}
		\label{alg:vr}
		\begin{algorithmic}[1]
			\Require
			Deterministic $x_0\in X$ and $y_0\in Y$;
			integers $T,q,B_{\rm r},b\ge1$;
			$0<\tau\le L$, $h>0$, $0<\alpha\le1$,
			$k\ge0$, and $0<\beta\le1$.
			\State
			Set $z_0=x_0$, $\xi_0=0\in\R^n$,
			and $n_0=v_0=0\in\R^p$.
			\For{$t=0,\ldots,T-1$}
			\State
			Set $u_{3t}=(x_t,y_t)$ and compute
			$\mathsf g_{3t}$ by~\eqref{eq:spider-estimator}.
			\State
			Set $\widehat g_t^{(0)}=\mathsf g_{3t}$ and
			compute $(\widetilde x_t,\widetilde y_t)$
			using the predictor updates in
			Algorithm~\ref{alg:stochastic}.
			
			\State
			Set $u_{3t+1}=(\widetilde x_t,\widetilde y_t)$
			and compute $\mathsf g_{3t+1}$
			by~\eqref{eq:spider-estimator}.
			\State
			Set $\widehat g_t^{(1)}=\mathsf g_{3t+1}$ and
			compute $\bar v_t,x_{t+1},\xi_{t+1},
			y_{t+1},n_{t+1}$ using the correction updates
			in Algorithm~\ref{alg:stochastic}.
			
			\State
			Set $u_{3t+2}=(x_{t+1},y_{t+1})$ and compute
			$\mathsf g_{3t+2}$ by~\eqref{eq:spider-estimator}.
			\State
			Set $\widehat g_t^{(2)}=\mathsf g_{3t+2}$ and
			compute $v_{t+1}$ and $z_{t+1}$ using the
			momentum and center updates in
			Algorithm~\ref{alg:stochastic}.
			\EndFor
			\State
			Draw $J\sim\operatorname{Unif}\{0,\ldots,T-1\}$
			independently of all oracle samples.
			\Ensure
			$(x_{\rm out},y_{\rm out})
			=(x_{J+1},y_{J+1})$.
		\end{algorithmic}
	\end{algorithm}
	
	Every occurrence of the second estimate within an iteration uses
	the same vector $\mathsf g_{3t+1}$.
	A refresh replaces only the gradient estimate: it does not restart
	the primal, dual, normal-cone, momentum, or center states.
	Thus periodic refreshing preserves the single-loop structure.
	As in the SPDE analysis, the displayed output rule is used for GS;
	the OS output is specified in Section~\ref{sec:os}.
	
	There are
	\[
	N_{\rm r}
	:=1+\left\lfloor\frac{3T-1}{q}\right\rfloor
	=\left\lceil\frac{3T}{q}\right\rceil
	\]
	refresh requests.
	Charging two SFO calls for every sample in every nonrefresh
	difference gives the total bound
	\[
	N_{\rm SFO}
	\le
	B_{\rm r}N_{\rm r}
	+2b(3T-N_{\rm r}).
	\]
	This bound counts all refresh and difference evaluations.
	Copying the estimate at repeated query points can only reduce
	the oracle cost.
	
	\paragraph{Conditional estimator errors.}
	Let $\mathcal H_j$ denote the sigma-algebra containing all
	information available immediately before request $j$ is
	processed.
	The query point $u_j$ is $\mathcal H_j$-measurable.
	For $j\ge1$, the preceding point $u_{j-1}$ and estimate
	$\mathsf g_{j-1}$ are also $\mathcal H_j$-measurable.
	Define
	\begin{equation}\label{eq:vr-errors}
		\eta_j:=\mathsf g_j-\nabla F(u_j),
		\qquad
		a_t:=\eta_{3t},
		\qquad
		b_t:=\eta_{3t+1},
		\qquad
		c_t:=\eta_{3t+2}.
	\end{equation}
	These errors satisfy the algebraic decompositions
	in~\eqref{eq:sample-batch-identities}.
	Their conditional moments, however, differ from those of
	independent mini-batch estimates.
	
	At a refresh request,
	Assumption~\ref{ass:oracle} gives
	\[
	\E[\eta_j\mid\mathcal H_j]=0,
	\qquad
	\E[\norm{\eta_j}^2\mid\mathcal H_j]
	\le\frac{\sigma^2}{B_{\rm r}}.
	\]
	At a nonrefresh request, write
	\[
	\eta_j=\eta_{j-1}+\delta_j,
	\]
	where
	\[
	\delta_j
	:=
	\frac{1}{b}\sum_{i=1}^{b}
	\bigl[
	\nabla f(u_j;\omega_{j,i})
	-\nabla f(u_{j-1};\omega_{j,i})
	\bigr]
	-\bigl[\nabla F(u_j)-\nabla F(u_{j-1})\bigr].
	\]
	Conditional unbiasedness of the paired differences and
	Assumption~\ref{ass:paired} imply
	\[
	\E[\delta_j\mid\mathcal H_j]=0,
	\qquad
	\E[\norm{\delta_j}^2\mid\mathcal H_j]
	\le\frac{\ell^2}{b}\norm{u_j-u_{j-1}}^2.
	\]
	Since $\eta_{j-1}$ is $\mathcal H_j$-measurable, it follows that
	\[
	\E[\eta_j\mid\mathcal H_j]=\eta_{j-1},
	\qquad
	\E[\norm{\eta_j}^2\mid\mathcal H_j]
	\le
	\norm{\eta_{j-1}}^2
	+\frac{\ell^2}{b}\norm{u_j-u_{j-1}}^2.
	\]
	Thus the recursive increment is conditionally centered, whereas
	the full estimator error generally is not.
	Accordingly, the conditional zero-mean identities
	in~\eqref{eq:conditional-noise} are not assumed for
	$a_t,b_t,c_t$ in this section.
	When a nonrefresh request repeats the preceding point,
	$\delta_j=0$ and $\eta_j=\eta_{j-1}$ exactly.
	
	\paragraph{Feasibility and moment bounds.}
	The projection identities in Lemma~\ref{lem:feasibility} apply
	without change, since VR-SPDE modifies only the gradient
	estimates.
	In particular, almost surely,
	\[
	x_t,z_t\in X,
	\qquad
	y_t\in Y,
	\qquad
	\xi_t\in N_X(x_t),
	\qquad
	n_t\in N_Y(y_t),
	\]
	and all predictor points belong to $X\times Y$.
	
	All states and estimates also have finite second moments at
	every finite iteration.
	Indeed, at a refresh request, population smoothness and the
	bounded-variance oracle assumption give a finite second moment
	whenever the query point is square-integrable.
	At a nonrefresh request, the paired-oracle bound gives a finite
	second moment for the recursive increment whenever the current
	and preceding query points are square-integrable.
	The affine updates and nonexpansive projections preserve this
	property.
	Induction along the query sequence, starting from the
	deterministic initialization, therefore establishes the claim.
	
	The analysis below combines a pathwise descent estimate with
	bounds on query motion and accumulated estimator error.
	Appropriate choices of the refresh period and batch sizes allow
	the variance terms to be absorbed into the descent bound,
	yielding the complexity guarantees for the strongly concave
	and merely concave settings.

	\subsection{Convergence analysis}
	
	Retain the virtual momentum and state in~\eqref{eq:virtual}, the quantities
	$\mathcal I_t^0,\mathcal D_t^0$ from Lemma~\ref{lem:virtual-error}, and
	$P_t^+,Q_t^+,X_t^+$ defined in the proof of Lemma~\ref{lem:budget}.
	Also write $Y_t^+=y_{t+1}-y^\star(z_t)$.
	The deterministic error bound~\eqref{eq:error-U} and virtual joint
	descent~\eqref{eq:virtual-joint} hold for these correlated errors: their
	proofs use only the update identities and inequalities that hold pathwise.
	After inserting~\eqref{eq:spider-estimator}, the errors $a_t,b_t,c_t$ are
	explicit functions of the realized samples $\{\omega_{j,i}\}$.  The next
	lemma is therefore proved for a fixed realization of the complete recursive
	estimator path; expectations enter only in
	Lemma~\ref{lem:vr-error} and the subsequent accumulated bounds.
	
	\begin{lemma}\label{lem:vr-pathwise}
		Suppose that Assumption~\ref{ass:model} holds, and let the iterates be
		generated by Algorithm~\ref{alg:vr} with the parameter choices
		in~\eqref{eq:step-damping} and~\eqref{eq:center-relaxation}.
		Set $C_\star:=140$. For every realization of the samples
		$\{\omega_{j,i}\}_{0\le j<3T}$ used in~\eqref{eq:spider-estimator},
		\begin{equation}\label{eq:vr-pathwise}
			\mathcal V_{t+1}-\mathcal V_t
			\le-\frac14\mathcal D_t^0
			-\frac\beta{16L}\norm{\nabla p(z_t)}^2
			+\frac{C_\star}{L\beta}
			(\norm{a_t}^2+\norm{b_t}^2+\norm{c_t}^2).
		\end{equation}
	\end{lemma}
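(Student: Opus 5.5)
Start from the sample-path virtual descent~\eqref{eq:virtual-joint}. Its proof, and that of~\eqref{eq:error-U}, use only the update identities, so they hold for the recursive-estimator errors. Then
\[
\mathcal V(z_{t+1},w_{t+1}^0)-\mathcal V_t\le-\tfrac12\mathcal D_t^0-\tfrac{\beta}{8L}\norm{\nabla p(z_t)}^2+\tfrac{17}{L}U_t^2,
\qquad U_t^2\le8d^2\norm{a_t}^2+8\norm{b_t}^2 .
\]
The exact quadratic expansion~\eqref{eq:noise-expansion} still holds pathwise, with $c_{y,t}$ now the $y$-part of $\eta_{3t+2}$. So $\mathcal V_{t+1}-\mathcal V_t$ is bounded by the right-hand side above plus $\Lambda_t$. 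The only new task is to bound $\Lambda_t$ \emph{pathwise}, since $c_t$ is no longer conditionally centered. We must absorb its linear terms into half of the dissipation and half of the gradient-norm term.

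The linear terms of $\Lambda_t$ are inner products of $kc_{y,t}$ with three quantities:
\begin{itemize}[leftmargin=2.2em]
\item $\widehat Q:=-\nabla_yG(x_{t+1},y_{t+1};z_{t+1})+n_{t+1}-v_{t+1}^0$;
\item $y_{t+1}-y^\star(z_{t+1})$;
\item $v_{t+1}^0$.
\end{itemize}
Rewrite the first two at center $z_t$. Since $\nabla_yG$ does not depend on $z$, we have $\widehat Q=Q_t^+$ exactly. By Lemma~\ref{lem:sensitivity}, $y_{t+1}-y^\star(z_{t+1})=Y_t^++O(\sqrt{2L/\tau}\norm{\delta_t})$. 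From~\eqref{eq:delta}, $\norm{\delta_t}\le\beta(\norm{X_t^+}+\norm{\nabla p(z_t)}/(2L))$. Using $s\sqrt{2L/\tau}=2L$, the perturbation term contributes at most a constant times
\[
k\beta\,\norm{c_{y,t}}\bigl(L\norm{X_t^+}+\norm{\nabla p(z_t)}\bigr).
\]
Then apply Young's inequality term by term, with weights matched to the dissipation. The dissipation contains $\tfrac{3hs}{512L}\norm{Q_t^+}^2$, $\tfrac{7hs}{4096L}\norm{v_{t+1}^0}^2$, $\tfrac{h\tau s}{128}\norm{Y_t^+}^2$, $\tfrac{hLs}{128}\norm{X_t^+}^2$ (up to constants), and $\tfrac{\beta}{8L}\norm{\nabla p}^2$. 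The bounds are:
\begin{itemize}[leftmargin=2.2em]
\item $\tfrac{2k}{L}|\ip{Q_t^+}{c_{y,t}}|\le\tfrac{hs}{1024L}\norm{Q_t^+}^2+\tfrac{O(k^2)}{Lhs}\norm{c_{y,t}}^2$;
\item $\tfrac{sk}{16L}|\ip{c_{y,t}}{Y_t^+}|\le\tfrac{h\tau s}{512}\norm{Y_t^+}^2+\tfrac{O(sk^2)}{L^2h\tau}\norm{c_{y,t}}^2$, which is $O(k^2/(Lhs))\norm{c_{y,t}}^2$ because $s^2=2L\tau$;
\item the $v_{t+1}^0$ term is handled in the same way;
\item the $\nabla p$ term becomes $\tfrac{\beta}{16L}\norm{\nabla p}^2+O(Lk^2\beta)\norm{c_{y,t}}^2$.
\end{itemize}
Since $k\le hL/2$ and $hs=4096\beta$, every coefficient of $\norm{c_{y,t}}^2$ is $O(k^2/(Lhs))=O(h^2L^2/(Lhs))=O(1/(L\beta))\cdot(hL)^2$. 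The $O(1/L)$ terms from $U_t^2$ and the quadratic term $k^2\norm{c}^2/L$ are likewise $\le O(1/(L\beta))$ since $\beta\le1$.

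Collecting terms gives $-\frac14\mathcal D_t^0$, since at most a quarter of each dissipation component is consumed. It also gives $-\frac{\beta}{16L}\norm{\nabla p(z_t)}^2$, and an error $\frac{C}{L\beta}(\norm{a_t}^2+\norm{b_t}^2+\norm{c_t}^2)$. Tracking the numerical constants, together with $hL\le1/192$, should give $C\le140$.

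The main obstacle is the bookkeeping in the second step. I must verify that the consumed fractions of each dissipation component, especially the $\norm{X_t^+}^2$ budget, stay below one half. That budget is already partly used in Lemma~\ref{lem:virtual-joint} and here receives the center-shift contribution. I must also verify that the $1/\beta$ scaling really suffices, i.e., that no term forces $1/\beta^2$. If the $X$-budget is too tight, I would first try routing the center-shift contribution through $\norm{\nabla p}^2$ and $\norm{\delta_t}$ directly, as in~\eqref{eq:anchor-cost}.
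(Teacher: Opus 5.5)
Your proposal is correct and follows the paper's proof essentially step by step: add the exact expansion \eqref{eq:noise-expansion} to \eqref{eq:virtual-joint}, identify the first inner product with $Q_t^+$, shift $y^\star(z_{t+1})$ back to $y^\star(z_t)$ via \eqref{eq:sensitivity} and \eqref{eq:delta}, and absorb each linear term by Young's inequality against the components of $\mathcal D_t^0$ and half of the $\norm{\nabla p(z_t)}^2$ term. Your worry about the $X$-budget is moot, since \eqref{eq:virtual-joint} already delivers the full $-\frac12\mathcal D_t^0$ net of the anchor cost, and the paper consumes only $\mathcal D_t^0/16$ here. One small slip: the center-shift term is $\frac{k\beta}{8}\norm{X_t^+}\norm{c_t}+\frac{k\beta}{16L}\norm{\nabla p(z_t)}\norm{c_t}$, so it carries a factor $1/L$ that you dropped; the resulting $\norm{c_t}^2$ coefficient is therefore $O(k^2\beta/L)$ rather than $O(Lk^2\beta)$, and that is what makes it fit under $C_\star/(L\beta)$.
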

	\begin{proof}
		Start from~\eqref{eq:virtual-joint} and add the exact endpoint expansion
		\eqref{eq:noise-expansion}. Since the $y$ gradient of $G$ is independent of
		the center, its first inner product contains $Q_t^+$.
		By~\eqref{eq:sensitivity}, with $\delta_t=z_{t+1}-z_t$,
		\begin{align}
			\norm{y_{t+1}-y^\star(z_{t+1})}
			&\le\norm{Y_t^+}+\sqrt{\frac{2L}{\tau}}\norm{\delta_t},\nonumber\\
			\frac{sk}{16L}\sqrt{\frac{2L}{\tau}}\norm{\delta_t}\norm{c_t}
			&\le\frac{k\beta}{8}\norm{X_t^+}\norm{c_t}
			+\frac{k\beta}{16L}\norm{\nabla p(z_t)}\norm{c_t}.
			\label{eq:vr-shifted-cross}
		\end{align}
		Here the second line uses~\eqref{eq:delta} and $s^2=2L\tau$.
		The dissipation~\eqref{fc:dissipation} has the lower bound
		\begin{equation}\label{eq:vr-D-components}
			\mathcal D_t^0\ge
			\frac{24\beta}{L}\norm{Q_t^+}^2
			+\frac{7\beta}{L}\norm{v_{t+1}^0}^2
			+32\tau\beta\norm{Y_t^+}^2
			+56L\beta\norm{X_t^+}^2.
		\end{equation}
		For clarity, Young's inequality bounds each linear endpoint term separately:
		\begin{align}
			\frac{2k}{L}\norm{Q_t^+}\norm{c_t}
			&\le\frac{24\beta}{16L}\norm{Q_t^+}^2
			+\frac{k^2}{L\beta}\norm{c_t}^2,\nonumber\\
			\frac{sk}{128L^2}\norm{v_{t+1}^0}\norm{c_t}
			&\le\frac{7\beta}{16L}\norm{v_{t+1}^0}^2
			+\frac{k^2}{L\beta}\norm{c_t}^2,\nonumber\\
			\frac{sk}{16L}\norm{Y_t^+}\norm{c_t}
			&\le2\tau\beta\norm{Y_t^+}^2
			+\frac{k^2}{L\beta}\norm{c_t}^2,\nonumber\\
			\frac{k\beta}{8}\norm{X_t^+}\norm{c_t}
			&\le\frac72 L\beta\norm{X_t^+}^2
			+\frac{k^2}{L\beta}\norm{c_t}^2,\nonumber\\
			\frac{k\beta}{16L}\norm{\nabla p(z_t)}\norm{c_t}
			&\le\frac\beta{16L}\norm{\nabla p(z_t)}^2
			+\frac{k^2}{L\beta}\norm{c_t}^2.
			\label{eq:vr-young}
		\end{align}
		These bounds follow from $ab\le r a^2+b^2/(4r)$,
		$s^2=2L\tau$, $s\le2L$, and $\beta\le1$.
		The four state-square terms in the first four lines sum to at most
		$\mathcal D_t^0/16$ by~\eqref{eq:vr-D-components}.
		The quadratic endpoint term satisfies
		\[
		\left(\frac1L+\frac{s}{256L^2}\right)k^2\norm{c_{y,t}}^2
		\le\frac{2k^2}{L\beta}\norm{c_t}^2.
		\]
		Consequently,~\eqref{eq:noise-expansion}--\eqref{eq:vr-young} imply
		\begin{equation}\label{eq:vr-endpoint-cost}
			\mathcal V_{t+1}-\mathcal V(z_{t+1},w_{t+1}^0)
			\le\frac1{16}\mathcal D_t^0
			+\frac\beta{16L}\norm{\nabla p(z_t)}^2
			+\frac{7k^2}{L\beta}\norm{c_t}^2.
		\end{equation}
		Add~\eqref{eq:virtual-joint} and~\eqref{eq:vr-endpoint-cost}.
		Using $U_t^2\le8d^2\norm{a_t}^2+8\norm{b_t}^2$, $d,k,\beta\le1$,
		and $17\cdot8=136<C_\star$, we have
		\begin{align*}
			-\frac12\mathcal D_t^0+\frac1{16}\mathcal D_t^0
			&\le-\frac14\mathcal D_t^0,\\
			\frac{17}{L}U_t^2+\frac{7k^2}{L\beta}\norm{c_t}^2
			&\le\frac{C_\star}{L\beta}
			(\norm{a_t}^2+\norm{b_t}^2+\norm{c_t}^2).
		\end{align*}
		The gradient coefficients leave $-\beta/(16L)$, proving~\eqref{eq:vr-pathwise}.
	\end{proof}

	\begin{lemma}\label{lem:vr-motion}
		Suppose that Assumption~\ref{ass:model} holds, and let the iterates be
		generated by Algorithm~\ref{alg:vr} with the parameter choices
		in~\eqref{eq:step-damping} and~\eqref{eq:center-relaxation}. For each iteration, the query points in~\eqref{eq:query-stream} satisfy
		\begin{equation}\label{eq:vr-motion}
			\norm{u_{3t+1}-u_{3t}}^2+\norm{u_{3t+2}-u_{3t+1}}^2
			\le\frac{\mathcal D_t^0}{L\beta}
			+\frac{\norm{a_t}^2+\norm{b_t}^2}{L^2}.
		\end{equation}
	\end{lemma}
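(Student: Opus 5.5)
The idea is to bound the two query-point displacements by the one-step increments already controlled in Lemma~\ref{lem:virtual-error}, and then convert $\mathcal I_t^0$ into a multiple of $\mathcal D_t^0$.

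\textbf{Step 1: the corrector step.} By \eqref{eq:virtual-dynamics},
\[
u_{3t+2}-u_{3t}=-h\bigl(P_t^++e_{x,t},\,Q_t^++e_{y,t}\bigr),
\]
so $\norm{u_{3t+2}-u_{3t}}\le h(\norm{(P_t^+,Q_t^+)}+\norm{e_t})$.

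\textbf{Step 2: the predictor step.} The predictor is a projection of the same base point $(x_t,y_t)$ as the corrector, with a different direction. Nonexpansiveness of $\Pi_X,\Pi_Y$ gives, exactly as in the proof of Lemma~\ref{lem:virtual-error},
\[
\norm{u_{3t+1}-u_{3t+2}}\le h\bigl(\sqrt{\mathcal I_t^0}+\norm{e_t}+\norm{a_t}\bigr).
\]
For $u_{3t+1}-u_{3t}$ I would use the triangle inequality through $u_{3t+2}$.

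\textbf{Step 3: eliminate $\norm{e_t}$.} Insert \eqref{eq:error-U}, $\norm{e_t}\le2d\sqrt{\mathcal I_t^0}+U_t$ with $U_t=2d\norm{a_t}+2\norm{b_t}$. Squaring and summing then yields
\[
\norm{u_{3t+1}-u_{3t}}^2+\norm{u_{3t+2}-u_{3t+1}}^2
\le c\,h^2\Bigl(\norm{P_t^+}^2+\norm{Q_t^+}^2+\mathcal I_t^0+\norm{a_t}^2+\norm{b_t}^2\Bigr)
\]
for a modest absolute constant $c$ (roughly $20$ to $30$).

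\textbf{Step 4: compare with the dissipation.} The noise terms are easy: $c h^2\le c/(192L)^2\le 1/L^2$. For the state terms I would read off from \eqref{fc:dissipation} that
\[
\mathcal D_t^0\ge \tfrac h4\norm{P_t^+}^2,\qquad
\mathcal D_t^0\ge \tfrac{3hs}{512L}\norm{Q_t^+}^2,\qquad
\mathcal D_t^0\ge \tfrac1{4L}\mathcal I_t^0,
\]
and use $\beta=hs/4096$, i.e.\ $hs=4096\beta$. Then
\[
h^2\norm{P_t^+}^2\le 4h\,\mathcal D_t^0\le \frac{\beta}{L}\,\mathcal D_t^0\cdot\frac{4hL}{\beta}.
\]
Since $\beta\le h\cdot 2L/4096$, the factor $4hL/\beta$ is large rather than small. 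The useful inequality is instead
\[
h^2\le 4h\cdot\frac{h}{4}\le \frac{1}{48L}\cdot\frac{1}{\beta}\cdot\beta\cdot\ldots,
\]
and since $\beta\le 1$ with $hL\le 1/192$, I would simply use $h\le 1/(192L)\le 1/(192L\beta)$. This gives $h^2\norm{P_t^+}^2\le \frac{4}{192L\beta}\mathcal D_t^0$, and similarly $h^2\mathcal I_t^0\le 4Lh^2\mathcal D_t^0\le \frac{1}{L\beta}\mathcal D_t^0\cdot 4L^2h^2$. For $Q_t^+$,
\[
h^2\norm{Q_t^+}^2\le \frac{512Lh}{3s}\,\mathcal D_t^0=\frac{512Lh^2}{3\cdot4096\beta}\,\mathcal D_t^0,
\]
which is tiny times $1/(L\beta)$. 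Hence every state contribution is at most a small fraction of $\mathcal D_t^0/(L\beta)$.

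\textbf{Main obstacle.} The only real work is the constant bookkeeping: checking that $c$ times the sum of these coefficients stays below $1$. The $\norm{Q_t^+}$ term carries the weakest dissipation weight ($hs/L$), and the $1/\beta$ on the right-hand side is exactly what absorbs it. If the crude triangle bound in Step 2 makes $c$ too large, I would first try Young's inequality with weights tuned to the $h^2$ factors.
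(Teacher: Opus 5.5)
Your proof is correct and is essentially the paper's argument. You use the same projection estimate from Lemma~\ref{lem:virtual-error} for $u_{3t+2}-u_{3t+1}$, substitute \eqref{eq:error-U}, square, and absorb the state terms through the $\norm{P_t^+}^2$, $\norm{Q_t^+}^2$ and $\mathcal I_t^0$ components of $\mathcal D_t^0$, using $hs=4096\beta$, $hL\le 1/192$ and $\beta\le1$.

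The one deviation is how you bound $u_{3t+1}-u_{3t}$. You go through $u_{3t+2}$ by the triangle inequality. The paper instead reads it directly off the predictor projection: $h(\norm{(P,Q)}+\norm{a_t})\le h(\norm{(P_t^+,Q_t^+)}+\sqrt{\mathcal I_t^0}+\norm{a_t})$.

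Your route adds $2\norm{e_t}$ and raises the $\norm{b_t}^2$ coefficient to about $76$, not the $20$--$30$ you quote. This is harmless because $76h^2\le 1/L^2$, provided you track coefficients term by term. With a single uniform $c=76$ and only $\beta\le 1$, the $\norm{P_t^+}^2$ term would give $c/48>1$. Using the actual size $\beta\le 2hL/4096$ also fixes this.
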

	\begin{proof}
		Use the fixed-center directions $P,Q,P^+,Q^+$ in~\eqref{fc:local-notation}
		for the actual initial state and the virtual endpoint state. Projection
		nonexpansiveness, the predictor update, and
		$\norm{(P,Q)-(P^+,Q^+)}=\sqrt{\mathcal I_t^0}$ give
		\begin{align*}
			\norm{u_{3t+1}-u_{3t}}
			&\le h\left(\sqrt{\norm{P_t^+}^2+\norm{Q_t^+}^2}
			+\sqrt{\mathcal I_t^0}+\norm{a_t}\right),\\
			\norm{u_{3t+2}-u_{3t+1}}
			&\le h\left(\sqrt{\mathcal I_t^0}+\norm{e_t}+\norm{a_t}\right).
		\end{align*}
		The second inequality is the projection estimate used in the proof of
		Lemma~\ref{lem:virtual-error}. By~\eqref{eq:error-U},
		\[
		\norm{e_t}^2\le8d^2\mathcal I_t^0+2U_t^2,
		\qquad U_t^2\le8d^2\norm{a_t}^2+8\norm{b_t}^2.
		\]
		Squaring the two motion bounds and using $(a+b+c)^2\le3(a^2+b^2+c^2)$ yields
		\begin{align}
			&\norm{u_{3t+1}-u_{3t}}^2+\norm{u_{3t+2}-u_{3t+1}}^2\nonumber\\
			&\le3h^2(\norm{P_t^+}^2+\norm{Q_t^+}^2)
			+(6+24d^2)h^2\mathcal I_t^0
			+(6+48d^2)h^2\norm{a_t}^2+48h^2\norm{b_t}^2\nonumber\\
			&\le8h^2(\norm{P_t^+}^2+\norm{Q_t^+}^2+\mathcal I_t^0)
			+64h^2(\norm{a_t}^2+\norm{b_t}^2).
			\label{eq:vr-motion-intermediate}
		\end{align}
		The last line uses $d=1/64$. From~\eqref{fc:dissipation},
		\[
		\norm{P_t^+}^2\le\frac4h\mathcal D_t^0,\qquad
		\norm{Q_t^+}^2\le\frac{L}{24\beta}\mathcal D_t^0,\qquad
		\mathcal I_t^0\le4L\mathcal D_t^0.
		\]
		Their coefficient in~\eqref{eq:vr-motion-intermediate} is
		$32h+h^2L/(3\beta)+32h^2L$. Since $hL\le1/64$ and $\beta\le1$,
		\begin{align*}
			L\beta\left(32h+\frac{h^2L}{3\beta}+32h^2L\right)
			&\le\frac{32}{64}+\frac1{3\cdot64^2}+\frac{32}{64^2}<1,\\
			64h^2&\le\frac1{L^2}.
		\end{align*}
		Substitution proves~\eqref{eq:vr-motion}.
	\end{proof}
	
	Define the cumulative expected error and dissipation by
	\begin{equation}\label{eq:vr-cumulative-definitions}
		\mathcal E_T:=\sum_{j=0}^{3T-1}\E\norm{\eta_j}^2
		=\sum_{t=0}^{T-1}\E(\norm{a_t}^2+\norm{b_t}^2+\norm{c_t}^2),
		\qquad
		\mathcal A_T:=\sum_{t=0}^{T-1}\E\mathcal D_t^0.
	\end{equation}
	
	\begin{lemma}\label{lem:vr-error}
		Suppose that Assumptions~\ref{ass:model},~\ref{ass:oracle}, and
		\ref{ass:paired} hold, and let the iterates and estimators be generated by
		Algorithm~\ref{alg:vr} with the parameter choices in~\eqref{eq:step-damping}
		and~\eqref{eq:center-relaxation}. Then
		\begin{equation}\label{eq:vr-cumulative-error}
			\mathcal E_T\le\frac{3T\sigma^2}{B_{\rm r}}
			+\frac{q\chi^2L}{b\beta}\mathcal A_T
			+\frac{q\chi^2}{b}\mathcal E_T.
		\end{equation}
	\end{lemma}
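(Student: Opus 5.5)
The plan is to unroll the SPIDER recursion inside each refresh epoch, bound the accumulated increments with the paired-oracle bound, and then convert query motion into dissipation and estimator error via Lemma~\ref{lem:vr-motion}.

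\textbf{Step 1: per-request error bound.} For a request $j$, let $j_0\le j$ be the most recent refresh index, so $j_0\equiv0\pmod q$ and $j-j_0<q$. Using the conditional identities stated after~\eqref{eq:vr-errors}, the increments $\delta_i$ are martingale differences: $\eta_{i-1}$ is $\mathcal H_i$-measurable and $\E[\delta_i\mid\mathcal H_i]=0$. Hence the cross terms vanish in total expectation and
\[
\E\norm{\eta_j}^2\le\frac{\sigma^2}{B_{\rm r}}+\frac{\ell^2}{b}\sum_{i=j_0+1}^{j}\E\norm{u_i-u_{i-1}}^2 .
\]
Summing over $j=0,\ldots,3T-1$, each motion term $\E\norm{u_i-u_{i-1}}^2$ appears only for the indices $j$ in the same epoch with $j\ge i$, which is at most $q$ of them. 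This gives
\[
\mathcal E_T\le\frac{3T\sigma^2}{B_{\rm r}}+\frac{q\ell^2}{b}\sum_{i=1}^{3T-1}\E\norm{u_i-u_{i-1}}^2 .
\]
One could equally restrict the motion sum to nonrefresh $i$; dropping that restriction only enlarges the bound.

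\textbf{Step 2: convert motion into dissipation and error.} The boundary moves $u_{3t}-u_{3t-1}$ vanish identically, since both equal $(x_t,y_t)$. The motion sum is therefore $\sum_t\bigl(\norm{u_{3t+1}-u_{3t}}^2+\norm{u_{3t+2}-u_{3t+1}}^2\bigr)$. Lemma~\ref{lem:vr-motion} bounds this pathwise by
\[
\frac{\mathcal D_t^0}{L\beta}+\frac{\norm{a_t}^2+\norm{b_t}^2}{L^2}.
\]
Taking expectations and summing gives
\[
\sum_i\E\norm{u_i-u_{i-1}}^2\le\frac{\mathcal A_T}{L\beta}+\frac{\mathcal E_T}{L^2}.
\]
The step uses $\norm{a_t}^2+\norm{b_t}^2\le\norm{a_t}^2+\norm{b_t}^2+\norm{c_t}^2$.

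\textbf{Step 3: assemble.} Multiplying by $q\ell^2/b=q\chi^2L^2/b$ turns the two terms into $q\chi^2L\mathcal A_T/(b\beta)$ and $q\chi^2\mathcal E_T/b$. This is exactly~\eqref{eq:vr-cumulative-error}.

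\textbf{Main obstacle.} The delicate point is Step 1's cross-term cancellation under the adaptive query sequence. Here $u_j$ depends on earlier estimates, so I must check that $u_j$, $u_{j-1}$, and $\eta_{j-1}$ are all $\mathcal H_j$-measurable, and that the fresh paired samples give $\E[\delta_j\mid\mathcal H_j]=0$ together with the $\ell^2\norm{u_j-u_{j-1}}^2/b$ second-moment bound. Both facts are already recorded before the lemma.

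Integrability also needs confirming, so that total expectations and the tower property are legitimate. This follows from the finite-second-moment discussion preceding the convergence analysis.

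A secondary bookkeeping check is that the epoch-counting factor is $q$ and not $q+1$. This holds because each epoch contains at most $q$ requests, and the increment at index $i$ contributes to $\eta_j$ only for $j\ge i$ within that same epoch.
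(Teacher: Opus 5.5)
Your proposal is correct and follows the paper's proof essentially step for step. You unroll the SPIDER error recursion from the last refresh using the conditional centering of the paired increments, count each motion term at most $q$ times, and drop the vanishing boundary moves $u_{3t}=u_{3t-1}$. You then convert the remaining motion via Lemma~\ref{lem:vr-motion} into $\mathcal A_T/(L\beta)+\mathcal E_T/L^2$ and multiply by $q\ell^2/b=q\chi^2L^2/b$.
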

	\begin{proof}
		At a refresh index $j$,~\eqref{eq:oracle} gives
		$\E[\norm{\eta_j}^2\mid\mathcal H_j]\le\sigma^2/B_{\rm r}$.
		At a nonrefresh index, define the centered fresh increment
		\[
		\zeta_j:=\frac1b\sum_{i=1}^b
		[\nabla f(u_j;\omega_{j,i})-\nabla f(u_{j-1};\omega_{j,i})]
		-[\nabla F(u_j)-\nabla F(u_{j-1})].
		\]
		Both points and $\eta_{j-1}$ are $\mathcal H_j$-measurable. Conditional
		independence and~\eqref{eq:paired-smoothness} give
		\begin{align}
			\eta_j&=\eta_{j-1}+\zeta_j,\qquad
			\E[\zeta_j\mid\mathcal H_j]=0,\nonumber\\
			\E[\norm{\eta_j}^2\mid\mathcal H_j]
			&=\norm{\eta_{j-1}}^2+\E[\norm{\zeta_j}^2\mid\mathcal H_j]
			\le\norm{\eta_{j-1}}^2+\frac{\ell^2}{b}\norm{u_j-u_{j-1}}^2.
			\label{eq:vr-one-query-error}
		\end{align}
		Let $r(j):=q\lfloor j/q\rfloor$ be the last refresh index.
		Iterating~\eqref{eq:vr-one-query-error} after that refresh yields
		\begin{equation}\label{eq:vr-block-error}
			\E\norm{\eta_j}^2
			\le\frac{\sigma^2}{B_{\rm r}}
			+\frac{\ell^2}{b}\sum_{i=r(j)+1}^{j}\E\norm{u_i-u_{i-1}}^2.
		\end{equation}
		When~\eqref{eq:vr-block-error} is summed over $j<3T$, each motion term
		is counted at most $q$ times. Moreover, $u_{3t}=u_{3t-1}$ for $t\ge1$.
		Thus
		\begin{align*}
			\mathcal E_T
			&\le\frac{3T\sigma^2}{B_{\rm r}}
			+\frac{q\ell^2}{b}\sum_{j=1}^{3T-1}\E\norm{u_j-u_{j-1}}^2\\
			&\le\frac{3T\sigma^2}{B_{\rm r}}
			+\frac{q\ell^2}{b}\left[
			\frac{\mathcal A_T}{L\beta}+\frac{\mathcal E_T}{L^2}\right],
		\end{align*}
		where the last line uses~\eqref{eq:vr-motion} and nonnegativity of the
		endpoint-error squares. Since $\chi=\ell/L$, this is~\eqref{eq:vr-cumulative-error}.
	\end{proof}

	\begin{lemma}\label{lem:vr-budget}
		Suppose that Assumptions~\ref{ass:model},~\ref{ass:oracle}, and
		\ref{ass:paired} hold, and let the iterates be generated by
		Algorithm~\ref{alg:vr} with the parameter choices in~\eqref{eq:step-damping}
		and~\eqref{eq:center-relaxation}. Suppose further that
		\begin{equation}\label{eq:vr-small-batch-condition}
			b\ge\frac{16C_\star q\chi^2}{\beta^2},
			\qquad
			M_T^{\rm vr}:=B+\frac{6C_\star T\sigma^2}{L\beta B_{\rm r}}.
		\end{equation}
		Then Algorithm~\ref{alg:vr} satisfies
		\begin{equation}\label{eq:vr-retained-budget}
			\frac18\mathcal A_T
			+\frac\beta{16L}\sum_{t=0}^{T-1}\E\norm{\nabla p(z_t)}^2
			\le M_T^{\rm vr},
		\end{equation}
		and its output obeys
		\begin{equation}\label{eq:vr-residual-bound}
			\E\mathcal R(x_{J+1},y_{J+1})^2
			\le\frac{128LB}{\beta T}
			+\frac{768C_\star\sigma^2}{\beta^2B_{\rm r}}
			+2\tau^2D_Y^2.
		\end{equation}
	\end{lemma}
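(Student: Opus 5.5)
The plan is to sum the pathwise descent inequality of Lemma~\ref{lem:vr-pathwise} over $t=0,\ldots,T-1$, take total expectations, and close the loop with the self-referential error bound of Lemma~\ref{lem:vr-error}. Summing~\eqref{eq:vr-pathwise} and using $\mathcal V_T\ge0$ and $\mathcal V_0\le B$ (Lemmas~\ref{lem:positive} and~\ref{lem:initial}, which are unaffected by the estimator) gives
\[
\frac14\mathcal A_T+\frac\beta{16L}\sum_{t<T}\E\norm{\nabla p(z_t)}^2
\le B+\frac{C_\star}{L\beta}\mathcal E_T.
\]
Lemma~\ref{lem:feasibility}-type integrability (already noted for VR-SPDE) justifies the expectations.

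Next I will solve~\eqref{eq:vr-cumulative-error} for $\mathcal E_T$. Under~\eqref{eq:vr-small-batch-condition}, $q\chi^2/b\le\beta^2/(16C_\star)\le1/2$, so $\mathcal E_T\le 6T\sigma^2/B_{\rm r}+(2q\chi^2L/(b\beta))\mathcal A_T$. Multiplying by $C_\star/(L\beta)$ gives a contribution $6C_\star T\sigma^2/(L\beta B_{\rm r})$, which is exactly the extra term in $M_T^{\rm vr}$, together with $\bigl(2C_\star q\chi^2/(b\beta^2)\bigr)\mathcal A_T\le\frac18\mathcal A_T$. Absorbing this into the left side leaves $\frac18\mathcal A_T$ and yields~\eqref{eq:vr-retained-budget}. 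The only delicate point is this absorption: the factor $1/\beta^2$ in the batch condition is precisely what cancels the $1/(L\beta)$ from Lemma~\ref{lem:vr-pathwise} against the $L/\beta$ motion factor from Lemma~\ref{lem:vr-motion}. I will check that the constant $16$ leaves room for the factor $2$ from solving for $\mathcal E_T$.

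For the residual, I will repeat the argument of Lemma~\ref{lem:budget} verbatim, with $M_T$ replaced by the budget now obtained. The coefficient $\frac12$ on $\mathcal D_t^0$ becomes $\frac18$, and $\beta/(8L)$ becomes $\beta/(16L)$, so each component bound in~\eqref{eq:component-budgets} is multiplied by $4$ at most (the $\nabla p$ term by $2$). Hence $\sum_t\E S_t\le 128LM_T^{\rm vr}/\beta$ replaces~\eqref{eq:sum-S}. The certificate $S_t$ uses only true gradients and the virtual momentum, so it is unaffected by correlated estimator errors. Using $\mathcal R^2\le2S_t+2\tau^2D_Y^2$ and averaging over the uniform $J$ gives
\[
\E\mathcal R^2\le\frac{256LM_T^{\rm vr}}{\beta T}+2\tau^2D_Y^2 .
\]
Substituting $M_T^{\rm vr}$ gives a $\sigma^2$ term of order $C_\star\sigma^2/(\beta^2B_{\rm r})$. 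I expect the main bookkeeping obstacle to be matching the stated constants $128$ and $768$. If the crude factor-$4$ scaling overshoots, I will instead retain the sharper per-component constants from~\eqref{eq:sum-S} (total $\le32$), rescaled by the ratio of the new coefficients, to recover the claimed bound.
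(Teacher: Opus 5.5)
Your proposal is correct and follows the paper's proof: solve the error recursion using $q\chi^2/b\le 1/2$, absorb $\frac{2C_\star q\chi^2}{b\beta^2}\mathcal A_T\le\frac18\mathcal A_T$ into the left side, and then redo the component budgets of Lemma~\ref{lem:budget}. Your uniform factor-$4$ scaling does overshoot (it gives $256$ and $1536$), so the per-component fallback is needed, and it is exactly what the paper does: the dominant $3\norm{\nabla p(z_t)}^2$ contribution only doubles ($24\to48$) while the other terms quadruple, so the certificate constant is about $52.7\le 64$, which yields precisely $128LB/(\beta T)$ and $768C_\star\sigma^2/(\beta^2B_{\rm r})$.
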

	\begin{proof}
		Since $q\chi^2/b\le\beta^2/(16C_\star)\le1/2$,
		rearranging~\eqref{eq:vr-cumulative-error} gives
		\begin{equation}\label{eq:vr-error-absorb}
			\mathcal E_T\le\frac{6T\sigma^2}{B_{\rm r}}
			+\frac{2q\chi^2L}{b\beta}\mathcal A_T.
		\end{equation}
		Sum~\eqref{eq:vr-pathwise} in expectation and use
		$\mathcal V_T\ge0$ and $\mathcal V_0\le B$ to obtain
		\begin{align*}
			\frac14\mathcal A_T
			+\frac\beta{16L}\sum_{t<T}\E\norm{\nabla p(z_t)}^2
			&\le B+\frac{C_\star}{L\beta}\mathcal E_T\\
			&\le B+\frac{6C_\star T\sigma^2}{L\beta B_{\rm r}}
			+\frac{2C_\star q\chi^2}{b\beta^2}\mathcal A_T\\
			&\le M_T^{\rm vr}+\frac18\mathcal A_T.
		\end{align*}
		Moving the last term to the left proves~\eqref{eq:vr-retained-budget}.
		Retaining individual components of~\eqref{fc:dissipation} gives
		\begin{equation}\label{eq:vr-component-budgets}
			\begin{gathered}
				\sum_{t<T}\E\norm{P_t^+}^2\le\frac{32M_T^{\rm vr}}h,\qquad
				\sum_{t<T}\E\norm{Q_t^+}^2\le\frac{LM_T^{\rm vr}}{3\beta},\qquad
				\sum_{t<T}\E\norm{v_{t+1}^0}^2\le\frac{8LM_T^{\rm vr}}{7\beta},\\
				\sum_{t<T}\E\norm{X_t^+}^2\le\frac{M_T^{\rm vr}}{7L\beta},\qquad
				\sum_{t<T}\E\norm{\nabla p(z_t)}^2\le\frac{16LM_T^{\rm vr}}\beta.
			\end{gathered}
		\end{equation}
		The algebraic certificate $S_t$ in~\eqref{eq:S} satisfies the same pointwise
		bound used to prove~\eqref{eq:sum-S}. Hence
		\begin{align}
			\sum_{t<T}\E S_t
			&\le\left(\frac{96\beta}{hL}+\frac{12}7+48+\frac23+\frac{16}7\right)
			\frac{LM_T^{\rm vr}}\beta
			\le\frac{64LM_T^{\rm vr}}\beta.
			\label{eq:vr-certificate-sum}
		\end{align}
		The last inequality uses $\beta/(hL)\le1/2048$.
		Finally, $\mathcal R^2\le2S_t+2\tau^2D_Y^2$ and independent uniform
		selection of $J$ give
		\[
		\E\mathcal R(x_{J+1},y_{J+1})^2
		\le\frac{128LM_T^{\rm vr}}{\beta T}+2\tau^2D_Y^2.
		\]
		Substitute~\eqref{eq:vr-small-batch-condition} to obtain~\eqref{eq:vr-residual-bound}.
	\end{proof}
	
	\subsection{Complexity results}
	
	The preceding lemmas reduce the remaining work to selecting the refresh
	batch, difference batch, and refresh period. We first exploit strong
	concavity, where no bias term is present, and then impose the same
	accuracy-dependent dual regularization used in the ordinary concave result.
	
	\subsubsection{Nonconvex--strongly concave setting}\label{sec:vr-ncsc}
	
	VR-SPDE applies Algorithm~\ref{alg:vr} to the query stream generated by
	SPDE: every explicit dual-regularization term is deleted and the
	parameters in~\eqref{eq:sc-parameters} are used.
	
	\begin{lemma}
		\label{lem:sc-vr-residual}
		Suppose that Assumptions~\ref{ass:model},~\ref{ass:oracle},
		\ref{ass:paired}, and~\ref{ass:strong-concavity} hold and that
		\begin{equation}\label{eq:sc-vr-batch-condition}
			b\ge \frac{16C_\star q\chi^2}{\beta_{\rm sc}^2}.
		\end{equation}
		Then VR-SPDE satisfies
		\begin{equation}\label{eq:sc-vr-residual}
			\E\mathcal R(x_{J+1},y_{J+1})^2
			\le \frac{128LB}{\beta_{\rm sc}T}
			+\frac{768C_\star\sigma^2}{\beta_{\rm sc}^2B_{\rm r}}.
		\end{equation}
	\end{lemma}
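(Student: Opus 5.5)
The plan is to repeat the substitution argument of Lemma~\ref{lem:sc-plain-residual}, this time through the variance-reduced chain (Lemmas~\ref{lem:vr-pathwise}, \ref{lem:vr-motion}, \ref{lem:vr-error}, \ref{lem:vr-budget}) instead of Theorem~\ref{thm:drift} and Lemma~\ref{lem:budget}.

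\textbf{Step 1: the potential and its deterministic descent.} Let $\mathcal V^{\rm sc}$ be the strongly concave potential from that proof. It is built from $G_{\rm sc}$ and $p_{\rm sc}$, with $(s,\tau,h,\beta)$ replaced by $(s_{\rm sc},\mu,h_{\rm sc},\beta_{\rm sc})$. By~\eqref{eq:sc-parameters} the scalar relations used in every earlier estimate still hold: $s_{\rm sc}^2=2L\mu$, $s_{\rm sc}\le2L$, $h_{\rm sc}L_{G,{\rm sc}}=1/64$, $h_{\rm sc}L\le1/192$, $\beta_{\rm sc}=h_{\rm sc}s_{\rm sc}/4096$, and $k_{\rm sc}\le h_{\rm sc}L/2$. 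Strong concavity~\eqref{eq:strong-concavity} supplies every dual growth term that the regularization $\tau$ supplied before. The sensitivity factor becomes $\sqrt{2L/\mu}$, and $s_{\rm sc}\sqrt{2L/\mu}=2L$ holds just as before. Hence Lemmas~\ref{lem:positive}, \ref{fc:inexact-dissipation}, \ref{lem:virtual-error}, \ref{lem:move}, and~\ref{lem:virtual-joint}, together with the exact expansion~\eqref{eq:noise-expansion}, transfer verbatim. All of them are pathwise and use no property of the errors $a_t,b_t,c_t$. Lemma~\ref{lem:vr-pathwise} therefore holds with the same $C_\star$ and with $\beta_{\rm sc}$. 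One point must be checked: in~\eqref{eq:vr-shifted-cross} the identity $s^2=2L\tau$ becomes $s_{\rm sc}^2=2L\mu$.

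\textbf{Step 2: motion bound, error recursion, and budget.} The motion bound of Lemma~\ref{lem:vr-motion} uses only projection nonexpansiveness, \eqref{eq:error-U}, and the dissipation components. It holds unchanged, with the same numerical margins, because $h_{\rm sc}L\le1/64$. The error recursion of Lemma~\ref{lem:vr-error} uses only Assumptions~\ref{ass:oracle} and~\ref{ass:paired} along the query stream~\eqref{eq:query-stream}, including the repeated boundary points. So~\eqref{eq:vr-cumulative-error} holds with $\beta_{\rm sc}$. Under~\eqref{eq:sc-vr-batch-condition} we have $q\chi^2/b\le\beta_{\rm sc}^2/(16C_\star)\le1/2$, and the absorption argument of Lemma~\ref{lem:vr-budget} gives~\eqref{eq:vr-retained-budget}. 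The initial potential is bounded by the same $B$, exactly as shown in the proof of Lemma~\ref{lem:sc-plain-residual}; this uses $\mu\le L$ and the absence of the $\tau$ terms.

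\textbf{Step 3: converting to the residual.} Bound the certificate $S_t$ from the component budgets~\eqref{eq:vr-component-budgets}. In the strongly concave case $S_t$ contains $-\nabla_yF(x_{t+1},y_{t+1})+n_{t+1}=Q_t^++v_{t+1}^0$ with no $\tau y_{t+1}$ shift. Its second component is therefore exactly the dual residual, and $\mathcal R(x_{t+1},y_{t+1})^2\le S_t$ without the additive $2\tau^2D_Y^2$. The inequality $\beta_{\rm sc}/(h_{\rm sc}L)=s_{\rm sc}/(4096L)\le1/2048$ yields $\sum_t\E S_t\le64LM_T^{\rm vr}/\beta_{\rm sc}$. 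Averaging over the independent index $J$ then gives $\E\mathcal R^2\le128LM_T^{\rm vr}/(\beta_{\rm sc}T)$. Substituting $M_T^{\rm vr}=B+6C_\star T\sigma^2/(L\beta_{\rm sc}B_{\rm r})$ gives~\eqref{eq:sc-vr-residual}.

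\textbf{Main obstacle.} The risk lies in Step 1: confirming that no estimate secretly relied on the explicit $-\tau y$ gradient. The place to check is the strong monotonicity~\eqref{fc:monotonicity} and the saddle growth~\eqref{eq:saddle-growth}. There the $\tau$-strong concavity must now come from $F$ itself with modulus $\mu$. Since $L_G$ becomes $3L$ rather than $3L+\tau$, all Lipschitz bounds only improve.
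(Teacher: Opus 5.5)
Your proposal is correct and follows essentially the same route as the paper. Like the paper, you push the strongly concave substitution from Lemma~\ref{lem:sc-plain-residual} through Lemmas~\ref{lem:vr-pathwise}--\ref{lem:vr-budget}, reuse $\mathcal V_0^{\rm sc}\le B$, and use the unshifted dual certificate to drop the $2\tau^2D_Y^2$ bias term. Your Step~3 is even slack by a factor of $2$: $\mathcal R^2\le S_t$ gives the constant $64$ rather than $128$, and the stated bound follows either way.
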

	\begin{proof}
		Apply the strongly-concave substitution established in the proof of
		Lemma~\ref{lem:sc-plain-residual} to
		Lemmas~\ref{lem:vr-pathwise}--\ref{lem:vr-budget}. The paired-oracle
		recursion and query-motion estimate use only mean-square smoothness,
		projection nonexpansiveness, and the scalar relations displayed after
		\eqref{eq:sc-parameters}; hence their constants are unchanged. The initial
		potential remains bounded by $B$, and the exact original dual certificate
		again removes the regularization-bias term. Substituting
		$\beta_{\rm sc}$ into~\eqref{eq:vr-residual-bound} therefore gives
		\eqref{eq:sc-vr-residual}.
	\end{proof}
	
	\begin{theorem}
		\label{thm:sc-vr-complexity}
		Suppose that Assumptions~\ref{ass:model},~\ref{ass:oracle},
		\ref{ass:paired}, and~\ref{ass:strong-concavity} hold. Fix a known bound
		$\overline B\ge B$ and set
		\begin{equation}\label{eq:sc-vr-parameters}
			\begin{gathered}
				T=\max\left\{1,\left\lceil
				\frac{256L\overline B}{\beta_{\rm sc}\eps^2}\right\rceil\right\},
				\qquad
				B_{\rm r}=\max\left\{1,\left\lceil
				\frac{1536C_\star\sigma^2}{\beta_{\rm sc}^2\eps^2}\right\rceil\right\},\\
				A_{{\rm vr},{\rm sc}}:=\frac{16C_\star\chi^2}{\beta_{\rm sc}^2},
				\qquad
				q=\max\left\{1,\left\lfloor
				\sqrt{\frac{B_{\rm r}}{A_{{\rm vr},{\rm sc}}}}\right\rfloor\right\},
				\qquad b=\lceil A_{{\rm vr},{\rm sc}}q\rceil.
			\end{gathered}
		\end{equation}
		Then VR-SPDE returns a point satisfying~\eqref{eq:goal}, and
		\begin{equation}\label{eq:sc-vr-oracle}
			N_{{\rm vr},{\rm sc}}
			\le B_{\rm r}+18T\min\left\{B_{\rm r},
			\sqrt{A_{{\rm vr},{\rm sc}}B_{\rm r}}\right\}.
		\end{equation}
		For fixed positive $\sigma$ and $\chi$, the parameter and oracle orders are
		\begin{gather}\label{eq:sc-vr-orders}
			T=O(\sqrt\kappa\eps^{-2}),\qquad
			B_{\rm r}=O(\kappa\eps^{-2}),\qquad
			q=\Theta((\chi\eps)^{-1}),\qquad
			b=O(\chi\kappa\eps^{-1}),\\
			N_{{\rm vr},{\rm sc}}
			=O\!\left(\kappa\eps^{-2}
			+\chi\kappa^{3/2}\eps^{-3}\right).
			\nonumber
		\end{gather}
		If $\sigma=0$, then $B_{\rm r}=q=1$ and
		$N_{{\rm vr},{\rm sc}}\le3T=O(\sqrt\kappa\eps^{-2})$.
	\end{theorem}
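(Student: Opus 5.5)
The proof reduces to Lemma~\ref{lem:sc-vr-residual} plus bookkeeping for the oracle count. There are three steps: verify the batch condition, verify that the residual bound is at most $\eps^2$, and then count the SFO calls.

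\textbf{Batch condition.} By construction $b=\lceil A_{{\rm vr},{\rm sc}}q\rceil\ge A_{{\rm vr},{\rm sc}}q=16C_\star q\chi^2/\beta_{\rm sc}^2$, which is exactly~\eqref{eq:sc-vr-batch-condition}. Lemma~\ref{lem:sc-vr-residual} therefore applies.

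\textbf{Residual bound.} From $T\ge 256L\overline B/(\beta_{\rm sc}\eps^2)$ and $\overline B\ge B$, the first term in~\eqref{eq:sc-vr-residual} is at most $\eps^2/2$. From $B_{\rm r}\ge 1536C_\star\sigma^2/(\beta_{\rm sc}^2\eps^2)$, the second term is at most $\eps^2/2$. When $\sigma=0$ that term vanishes, so~\eqref{eq:goal} follows in all cases.

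\textbf{Oracle count.} I would start from $N_{\rm SFO}\le B_{\rm r}N_{\rm r}+2b(3T-N_{\rm r})$ with $N_{\rm r}=\lceil 3T/q\rceil\le 1+3T/q$, which gives $N\le B_{\rm r}+3TB_{\rm r}/q+6Tb$.

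I expect this to be the main obstacle: showing that both $B_{\rm r}/q$ and $b$ are bounded by a constant times $\min\{B_{\rm r},\sqrt{A B_{\rm r}}\}$, where $A=A_{{\rm vr},{\rm sc}}$. I would split into two cases.
\begin{itemize}
\item If $B_{\rm r}\ge A$, then $q=\lfloor\sqrt{B_{\rm r}/A}\rfloor\ge\tfrac12\sqrt{B_{\rm r}/A}$. Hence $B_{\rm r}/q\le 2\sqrt{AB_{\rm r}}$, and $b\le Aq+1\le\sqrt{AB_{\rm r}}+1\le 2\sqrt{AB_{\rm r}}$, using $A\ge1$. In this case $\sqrt{AB_{\rm r}}\le B_{\rm r}$, so the minimum is $\sqrt{AB_{\rm r}}$.
\item If $B_{\rm r}<A$, then $q=1$. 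The refresh period is one request, so every request is a refresh, $N_{\rm r}=3T$, and there are no difference evaluations. The cost is $3TB_{\rm r}$, and here $B_{\rm r}=\min\{B_{\rm r},\sqrt{AB_{\rm r}}\}$.
\end{itemize}
In both cases the constants fit inside $18T\min\{\cdot\}$: in the first case $3T\cdot2+6T\cdot2=18T$.

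\textbf{Orders.} Insert $\beta_{\rm sc}^{-1}=\Theta(\sqrt\kappa)$ from~\eqref{eq:sc-parameters}. This gives $T=O(\sqrt\kappa\eps^{-2})$, $B_{\rm r}=O(\kappa\eps^{-2})$ and $A=\Theta(\chi^2\kappa)$. It follows that $\sqrt{B_{\rm r}/A}=\Theta((\chi\eps)^{-1})$, hence $q=\Theta((\chi\eps)^{-1})$ for small $\eps$, and $b=O(Aq)=O(\chi\kappa\eps^{-1})$. Then $T\sqrt{AB_{\rm r}}=O(\sqrt\kappa\eps^{-2}\cdot\chi\kappa\eps^{-1})=O(\chi\kappa^{3/2}\eps^{-3})$, which together with the leading $B_{\rm r}=O(\kappa\eps^{-2})$ gives the stated $N$.

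\textbf{The case $\sigma=0$.} Here $B_{\rm r}=1\le A$, so $q=1$ and every request is a single-sample refresh. This gives $N\le 3T=O(\sqrt\kappa\eps^{-2})$.
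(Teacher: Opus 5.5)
Your proposal is correct and follows essentially the same route as the paper. You verify the batch condition $b\ge A_{{\rm vr},{\rm sc}}q$ so that Lemma~\ref{lem:sc-vr-residual} applies, allot $\eps^2/2$ to each residual term, and bound $N\le B_{\rm r}+3TB_{\rm r}/q+6Tb$ by a two-case argument. The only cosmetic difference is where you split the cases: you split at $B_{\rm r}\ge A_{{\rm vr},{\rm sc}}$, using $\lfloor x\rfloor\ge x/2$ for $x\ge1$, whereas the paper splits at $q\ge2$ versus $q=1$ and uses $B_{\rm r}<4A_{{\rm vr},{\rm sc}}$ to get $B_{\rm r}\le2\min\{B_{\rm r},\sqrt{A_{{\rm vr},{\rm sc}}B_{\rm r}}\}$. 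Both splits close with the constant $18$.
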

	\begin{proof}
		The batch choice gives
		$b\ge16C_\star q\chi^2/\beta_{\rm sc}^2$, so
		Lemma~\ref{lem:sc-vr-residual} applies. Moreover,
		\[
		\frac{128LB}{\beta_{\rm sc}T}\le\frac{\eps^2}{2},
		\qquad
		\frac{768C_\star\sigma^2}{\beta_{\rm sc}^2B_{\rm r}}
		\le\frac{\eps^2}{2},
		\]
		which proves~\eqref{eq:goal}.
		
		There are $3T$ estimator requests and exactly $\lceil3T/q\rceil$ refreshes.
		If $q\ge2$, then $B_{\rm r}\ge4A_{{\rm vr},{\rm sc}}$ and
		\[
		\frac12\sqrt{\frac{B_{\rm r}}{A_{{\rm vr},{\rm sc}}}}\le q,
		\qquad
		b\le2\sqrt{A_{{\rm vr},{\rm sc}}B_{\rm r}}.
		\]
		Charging $B_{\rm r}$ calls per refresh and $2b$ calls per paired
		difference gives
		\[
		N_{{\rm vr},{\rm sc}}
		\le B_{\rm r}+\frac{3TB_{\rm r}}q+6Tb
		\le B_{\rm r}+18T\sqrt{A_{{\rm vr},{\rm sc}}B_{\rm r}}.
		\]
		If $q=1$, every request is a refresh and
		$N_{{\rm vr},{\rm sc}}=3TB_{\rm r}$. In this case
		$B_{\rm r}<4A_{{\rm vr},{\rm sc}}$, so
		$B_{\rm r}\le2\min\{B_{\rm r},
		\sqrt{A_{{\rm vr},{\rm sc}}B_{\rm r}}\}$ and the same bound follows.
		This proves~\eqref{eq:sc-vr-oracle} in both cases.
		
		By~\eqref{eq:sc-parameters},
		$\beta_{\rm sc}=\Theta(\kappa^{-1/2})$. For fixed positive $\sigma$ and
		$\chi$,~\eqref{eq:sc-vr-parameters} therefore gives
		\[
		B_{\rm r}=\Theta(\kappa\eps^{-2}),\qquad
		A_{{\rm vr},{\rm sc}}=\Theta(\chi^2\kappa),\qquad
		\sqrt{B_{\rm r}/A_{{\rm vr},{\rm sc}}}=\Theta((\chi\eps)^{-1}).
		\]
		Consequently, $q=\Theta((\chi\eps)^{-1})$,
		$b=O(\chi\kappa\eps^{-1})$, and
		\begin{align*}
			N_{{\rm vr},{\rm sc}}
			&\le B_{\rm r}+18T\sqrt{A_{{\rm vr},{\rm sc}}B_{\rm r}}\\
			&=O\!\left(\kappa\eps^{-2}
			+\chi\kappa^{3/2}\eps^{-3}\right).
		\end{align*}
		If $\sigma=0$, then $B_{\rm r}=1<A_{{\rm vr},{\rm sc}}$, hence $q=1$;
		all $3T$ requests are refreshes of size one and
		$N_{{\rm vr},{\rm sc}}\le3T$.
	\end{proof}
	
	Compared with SPDE, the iteration count is unchanged. Variance
	reduction replaces a fresh batch at every request by periodic refreshes and
	paired differences, yielding
	$O(\kappa\eps^{-2}+\chi\kappa^{3/2}\eps^{-3})$ oracle calls under the stronger
	paired mean-square smoothness assumption.
	
	\subsubsection{Nonconvex--concave setting}\label{sec:vr-ncc}
	
	\begin{theorem}\label{thm:vr-complexity}
		Suppose that Assumptions~\ref{ass:model},~\ref{ass:oracle}, and
		\ref{ass:paired} hold. Fix a known bound $\overline B\ge B$ independent of $\eps,\tau$.
		For $\eps>0$, choose $\tau$ as in~\eqref{eq:tau-choice}, choose
		$h,\alpha,k$ by~\eqref{eq:step-damping}, and choose $\beta$
		by~\eqref{eq:center-relaxation}. Set
		\begin{equation}\label{eq:vr-parameters}
			\begin{gathered}
				T=\max\left\{1,\left\lceil\frac{512L\overline B}{\beta\eps^2}\right\rceil\right\},\qquad
				B_{\rm r}=\max\left\{1,\left\lceil\frac{3072C_\star\sigma^2}{\beta^2\eps^2}\right\rceil\right\},\\
				A_{\rm vr}:=\frac{16C_\star\chi^2}{\beta^2},\qquad
				q=\max\left\{1,\left\lfloor\sqrt{\frac{B_{\rm r}}{A_{\rm vr}}}\right\rfloor\right\},\qquad
				b=\lceil A_{\rm vr}q\rceil.
			\end{gathered}
		\end{equation}
		Then Algorithm~\ref{alg:vr} satisfies~\eqref{eq:goal}.
		Its number of stochastic full-gradient oracle calls obeys
		\begin{align}
			N_{\rm vr}
			&\le B_{\rm r}\left\lceil\frac{3T}q\right\rceil
			+2b\left(3T-\left\lceil\frac{3T}q\right\rceil\right)\nonumber\\
			&\le B_{\rm r}+18T\min\{B_{\rm r},\sqrt{A_{\rm vr}B_{\rm r}}\}.
			\label{eq:vr-oracle-budget}
		\end{align}
		For fixed problem data, $\overline B$, and $\sigma>0$, as $\eps\downarrow0$,
		\begin{equation}\label{eq:vr-orders}
			T=O(\eps^{-5/2}),\qquad B_{\rm r}=O(\eps^{-3}),\qquad
			q=\Theta(\eps^{-1}),\qquad b=O(\eps^{-2}),\qquad
			N_{\rm vr}=O(\eps^{-9/2}).
		\end{equation}
		If $\sigma=0$, the choices in~\eqref{eq:vr-parameters} give
		$B_{\rm r}=q=1$ and $N_{\rm vr}\le3T=O(\eps^{-5/2})$.
	\end{theorem}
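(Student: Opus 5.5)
The plan mirrors the proof of Theorem~\ref{thm:sc-vr-complexity}, with $\beta_{\rm sc}$ replaced by the $\eps$-dependent $\beta$ from~\eqref{eq:beta-rate} and the bias term $2\tau^2D_Y^2$ controlled by the choice~\eqref{eq:tau-choice}.

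\emph{Accuracy.} First I will check the hypothesis of Lemma~\ref{lem:vr-budget}. Since $b=\lceil A_{\rm vr}q\rceil\ge16C_\star q\chi^2/\beta^2$, condition~\eqref{eq:vr-small-batch-condition} holds, so~\eqref{eq:vr-residual-bound} applies. I will then bound its three terms separately.
\begin{itemize}
\item The choice of $T$ gives $128LB/(\beta T)\le128L\overline B/(\beta T)\le\eps^2/4$.
\item The choice of $B_{\rm r}$ gives $768C_\star\sigma^2/(\beta^2B_{\rm r})\le\eps^2/4$; when $\sigma=0$ this term vanishes.
\item Since $\tau D_Y\le\eps/2$, the bias term satisfies $2\tau^2D_Y^2\le\eps^2/2$.
\end{itemize}
Summing the three bounds gives~\eqref{eq:goal}.

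\emph{Oracle count.} The first inequality in~\eqref{eq:vr-oracle-budget} is the general accounting $N_{\rm SFO}\le B_{\rm r}N_{\rm r}+2b(3T-N_{\rm r})$ with $N_{\rm r}=\lceil3T/q\rceil$. For the second inequality I split into two cases, exactly as in Theorem~\ref{thm:sc-vr-complexity}.
\begin{itemize}
\item If $q\ge2$, then $B_{\rm r}\ge4A_{\rm vr}$. This gives $q\ge\tfrac12\sqrt{B_{\rm r}/A_{\rm vr}}$ and $b\le A_{\rm vr}q+1\le2\sqrt{A_{\rm vr}B_{\rm r}}$, where $A_{\rm vr}\ge1$ absorbs the ceiling. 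Then
\[
B_{\rm r}\lceil3T/q\rceil\le B_{\rm r}+3TB_{\rm r}/q\le B_{\rm r}+6T\sqrt{A_{\rm vr}B_{\rm r}},
\]
and $6Tb\le12T\sqrt{A_{\rm vr}B_{\rm r}}$. Since $\sqrt{A_{\rm vr}B_{\rm r}}\le B_{\rm r}$ in this case, the minimum in~\eqref{eq:vr-oracle-budget} equals $\sqrt{A_{\rm vr}B_{\rm r}}$, and the total is $B_{\rm r}+18T\sqrt{A_{\rm vr}B_{\rm r}}$.
\item If $q=1$, every request is a refresh, so $N_{\rm vr}=3TB_{\rm r}$. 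Here $B_{\rm r}<4A_{\rm vr}$, hence $B_{\rm r}\le2\sqrt{A_{\rm vr}B_{\rm r}}$, and $3TB_{\rm r}$ is bounded by $18T\min\{B_{\rm r},\sqrt{A_{\rm vr}B_{\rm r}}\}$.
\end{itemize}

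\emph{Orders.} From~\eqref{eq:beta-rate}, $\beta^{-1}=\Theta(K_\eps)=\Theta(\eps^{-1/2})$ for small $\eps$. This yields
\[
T=O(\eps^{-5/2}),\qquad B_{\rm r}=\Theta(\beta^{-2}\eps^{-2})=\Theta(\eps^{-3}),\qquad A_{\rm vr}=\Theta(\chi^2\eps^{-1}).
\]
It follows that $\sqrt{B_{\rm r}/A_{\rm vr}}=\Theta(\eps^{-1})$, so $q=\Theta(\eps^{-1})$ and $b=O(A_{\rm vr}q)=O(\eps^{-2})$. Finally,
\[
N_{\rm vr}\le B_{\rm r}+18T\sqrt{A_{\rm vr}B_{\rm r}}=O\bigl(\eps^{-3}+\eps^{-5/2}\cdot\eps^{-2}\bigr)=O(\eps^{-9/2}).
\]

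\emph{The case $\sigma=0$.} Here $B_{\rm r}=1<A_{\rm vr}$ because $C_\star,\chi\ge1$ and $\beta<1$. Hence $q=1$, every request is a refresh of size one, and $N_{\rm vr}\le3T$.

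\emph{Main obstacle.} The delicate step is the ceiling bookkeeping in the case split, namely showing $b\le2\sqrt{A_{\rm vr}B_{\rm r}}$ when $q\ge2$. Everything else follows directly from Lemma~\ref{lem:vr-budget} and~\eqref{eq:beta-rate}.
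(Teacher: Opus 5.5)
Your proposal is correct and matches the paper's proof step for step. You check the batch condition of Lemma~\ref{lem:vr-budget}, split the residual bound into pieces of size $\eps^2/4$, $\eps^2/4$, and $\eps^2/2$, use the same $q\ge2$ versus $q=1$ case split for the oracle count, and read off the orders from $\beta=\Theta(\eps^{1/2})$; your remark that $\sqrt{A_{\rm vr}B_{\rm r}}\le B_{\rm r}$ when $q\ge2$ only makes explicit a step the paper leaves implicit.
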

	\begin{proof}
		By~\eqref{eq:vr-parameters}, $b\ge16C_\star q\chi^2/\beta^2$, so
		Lemma~\ref{lem:vr-budget} applies. Its three residual terms satisfy
		\begin{equation}\label{eq:vr-error-allocation}
			\frac{128LB}{\beta T}\le\frac{\eps^2}{4},\qquad
			\frac{768C_\star\sigma^2}{\beta^2B_{\rm r}}\le\frac{\eps^2}{4},\qquad
			2\tau^2D_Y^2\le\frac{\eps^2}{2}.
		\end{equation}
		When $\sigma=0$, the second term is zero. Adding~\eqref{eq:vr-error-allocation}
		proves $\E\mathcal R^2\le\eps^2$.
		
		There are $3T$ estimator requests, including exactly $\lceil3T/q\rceil$
		refresh requests. Each refresh costs $B_{\rm r}$ calls and each other
		request costs at most $2b$ calls, proving the first line
		of~\eqref{eq:vr-oracle-budget}.
		Since $A_{\rm vr}\ge1$, if $q\ge2$ then $B_{\rm r}\ge4A_{\rm vr}$ and
		\[
		\frac12\sqrt{\frac{B_{\rm r}}{A_{\rm vr}}}\le q
		\le\sqrt{\frac{B_{\rm r}}{A_{\rm vr}}},\qquad
		b\le\sqrt{A_{\rm vr}B_{\rm r}}+1
		\le2\sqrt{A_{\rm vr}B_{\rm r}}.
		\]
		Consequently,
		\begin{align*}
			N_{\rm vr}
			&\le B_{\rm r}+\frac{3TB_{\rm r}}q+6Tb\\
			&\le B_{\rm r}+18T\sqrt{A_{\rm vr}B_{\rm r}}
			=B_{\rm r}+18T\min\{B_{\rm r},\sqrt{A_{\rm vr}B_{\rm r}}\}.
		\end{align*}
		If $q=1$, every request is a refresh and $N_{\rm vr}=3TB_{\rm r}$.
		Here $B_{\rm r}<4A_{\rm vr}$, which implies
		$B_{\rm r}\le2\min\{B_{\rm r},\sqrt{A_{\rm vr}B_{\rm r}}\}$;
		the same upper bound follows.
		
		For fixed $\sigma>0$ and sufficiently small $\eps$,
		$\tau=\eps/(2D_Y)$ and~\eqref{eq:beta-rate} gives
		$\beta=\Theta(\eps^{1/2})$.
		Thus~\eqref{eq:vr-parameters} implies
		\[
		B_{\rm r}=\Theta(\eps^{-3}),\qquad
		A_{\rm vr}=\Theta(\eps^{-1}),\qquad
		\sqrt{B_{\rm r}/A_{\rm vr}}=\Theta(\eps^{-1}).
		\]
		It follows that $q=\Theta(\eps^{-1})$, $b=O(\eps^{-2})$, and
		\[
		N_{\rm vr}
		=O\!\left(B_{\rm r}+T\sqrt{A_{\rm vr}B_{\rm r}}\right)
		=O\!\left(\eps^{-3}+\eps^{-5/2}\eps^{-2}\right)
		=O(\eps^{-9/2}).
		\]
		When $\sigma=0$, $B_{\rm r}=1$ and $A_{\rm vr}>1$ force $q=1$,
		so the difference batches are never used and $N_{\rm vr}\le3T$.
	\end{proof}
	
	\begin{remark}
		The iteration order remains $O(\eps^{-5/2})$ in both stochastic methods.
		In SPDE, every iteration pays for three batches of order $\eps^{-5/2}$.
		In VR-SPDE, a refresh of order $\eps^{-3}$ is spread over
		$\Theta(\eps^{-1})$ requests, and the difference batches have order
		$\eps^{-2}$. The average oracle work per iteration is therefore
		$O(\eps^{-2})$ for fixed positive variance. The stronger oracle in
		Assumption~\ref{ass:paired} enables this improvement.
		The bounds in Theorems~\ref{thm:complexity} and~\ref{thm:vr-complexity}
		are upper bounds for the same expected squared game-stationarity criterion.
	\end{remark}

	\section{Optimization-stationarity guarantees}\label{sec:os}
	
	The preceding sections use game stationarity, which evaluates a
	feasible primal--dual pair.  We now analyze the optimization-stationarity
	criterion introduced in~\eqref{eq:os-criterion}.  The same center-based
	algorithmic constructions, with the OS-specific parameter choices given
	below, provide this guarantee through the center sequences of SPDE and
	VR-SPDE without any additional stochastic-gradient evaluation.
	
	For $0\le\tau\le L$, define the regularized value function and its
	$1/(2L)$-Moreau envelope over $X$ by
	\begin{equation}\label{eq:os-value-envelope}
		\phi_\tau(x):=\max_{y\in Y}
		\left\{F(x,y)-\frac\tau2\norm y^2\right\},\qquad
		p_\tau(z):=\min_{x\in X}
		\left\{\phi_\tau(x)+L\norm{x-z}^2\right\}.
	\end{equation}
	Thus $\phi_0=\phi$, $p_0$ agrees with the original envelope
	in~\eqref{eq:os-original-envelope}, $p_\tau=p$ in the regularized analysis of
	Section~\ref{sec:plain}, and $p_0=p_{\rm sc}$ in the strongly-concave
	specialization.  Assumption~\ref{ass:model} implies that every $\phi_\tau$
	is $L$-weakly convex.  Consequently, the minimizer
	\begin{equation}\label{eq:os-prox-point}
		x_\tau^\star(z):=\argmin_{x\in X}
		\left\{\phi_\tau(x)+L\norm{x-z}^2\right\}
	\end{equation}
	is unique and
	\begin{equation}\label{eq:os-envelope-gradient}
		\nabla p_\tau(z)=2L(z-x_\tau^\star(z)).
	\end{equation}
	For the OS results below, the algorithms use the same independent index
	$J\sim\operatorname{Unif}\{0,\ldots,T-1\}$ as before and additionally report
	$z_{\rm out}:=z_J$.  Storing and reporting this center changes neither the
	updates nor the SFO count.
	
	\begin{lemma}\label{lem:os-transfer}
		Suppose that Assumption~\ref{ass:model} holds.  For every
		$0\le\tau\le L$ and $z\in\mathbb R^n$,
		\begin{align}
			0&\le\phi(x)-\phi_\tau(x)\le\frac\tau2D_Y^2,
			&&x\in X,\label{eq:os-uniform-bias}\\
			\norm{x_\tau^\star(z)-x_0^\star(z)}^2
			&\le\frac{\tau D_Y^2}{L},\label{eq:os-prox-transfer}\\
			\norm{\nabla p_0(z)-\nabla p_\tau(z)}
			&\le2D_Y\sqrt{L\tau}.\label{eq:os-gradient-transfer}
		\end{align}
		In particular,
		\begin{equation}\label{eq:os-squared-transfer}
			\mathcal S_{\rm OS}(z)^2
			\le2\norm{\nabla p_\tau(z)}^2+8L\tau D_Y^2.
		\end{equation}
	\end{lemma}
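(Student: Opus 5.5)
The plan is to prove the three displayed estimates in order, each from the one before, and then combine them.

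\emph{Uniform bias.} For \eqref{eq:os-uniform-bias}, fix $x\in X$. Since $\norm y\le D_Y$ on $Y$,
\[
F(x,y)-\tfrac\tau2 D_Y^2\le F(x,y)-\tfrac\tau2\norm y^2\le F(x,y)\qquad\text{for every }y\in Y.
\]
Taking the maximum over $y\in Y$ in all three terms gives $\phi(x)-\tau D_Y^2/2\le\phi_\tau(x)\le\phi(x)$.

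\emph{Proximal-point transfer.} For \eqref{eq:os-prox-transfer}, define the two proximal objectives
\[
\Psi_\tau(x):=\phi_\tau(x)+L\norm{x-z}^2,\qquad \Psi_0(x):=\phi(x)+L\norm{x-z}^2 .
\]
Both are $L$-strongly convex on $X$, because $\phi_\tau$ and $\phi$ are $L$-weakly convex and the quadratic contributes modulus $2L$. Write $a=x_\tau^\star(z)$ and $b=x_0^\star(z)$. Quadratic growth of each function around its own minimizer gives
\[
\Psi_\tau(b)-\Psi_\tau(a)\ge\tfrac L2\norm{a-b}^2,\qquad \Psi_0(a)-\Psi_0(b)\ge\tfrac L2\norm{a-b}^2 .
\]
When the two inequalities are added, the quadratic terms $L\norm{\cdot-z}^2$ cancel. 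What remains is
\[
L\norm{a-b}^2\le[\phi_\tau(b)-\phi(b)]+[\phi(a)-\phi_\tau(a)]\le 0+\tfrac\tau2 D_Y^2 ,
\]
where the last step uses \eqref{eq:os-uniform-bias}. This yields $\norm{a-b}^2\le\tau D_Y^2/(2L)$, which is even stronger than the stated bound $\tau D_Y^2/L$.

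The step needing the most care is the growth inequality for a strongly convex function minimized over a closed convex set. It does not require differentiability: strong convexity along the segment from the minimizer, combined with minimality of the minimizer, gives growth with constant $L/2$. The same argument underlies \eqref{eq:growth-x}. Existence and uniqueness of both minimizers were already noted above.

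\emph{Gradient transfer and combination.} For \eqref{eq:os-gradient-transfer}, apply \eqref{eq:os-envelope-gradient} for both $\tau$ and $0$:
\[
\nabla p_0(z)-\nabla p_\tau(z)=2L\bigl(x_\tau^\star(z)-x_0^\star(z)\bigr).
\]
Its norm is at most $2L\sqrt{\tau D_Y^2/L}=2D_Y\sqrt{L\tau}$. Finally, $(u+v)^2\le2u^2+2v^2$ gives
\[
\mathcal S_{\rm OS}(z)^2\le2\norm{\nabla p_\tau(z)}^2+2\cdot4L\tau D_Y^2 ,
\]
which is exactly \eqref{eq:os-squared-transfer}.
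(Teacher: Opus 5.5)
Your proof is correct and follows essentially the same route as the paper. Both arguments use the sandwich for \eqref{eq:os-uniform-bias}, $L$-strong convexity of the two proximal objectives, the envelope-gradient formula \eqref{eq:os-envelope-gradient}, and $\norm{u+v}^2\le2\norm u^2+2\norm v^2$.

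The only difference is in \eqref{eq:os-prox-transfer}. The paper uses quadratic growth of $\phi+L\norm{\cdot-z}^2$ at $x_0^\star(z)$ together with mere minimality of $x_\tau^\star(z)$, whereas you use growth at both minimizers and add the two inequalities. That is why you get the sharper constant $\tau D_Y^2/(2L)$, and hence $\sqrt2\,D_Y\sqrt{L\tau}$ in \eqref{eq:os-gradient-transfer}.
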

	\begin{proof}
		For every $x\in X$, subtracting the nonnegative quadratic from the inner
		maximization gives $\phi_\tau(x)\le\phi(x)$.  If
		$y_0(x)\in\operatorname*{argmax}_{y\in Y}F(x,y)$, compactness of $Y$ and the definition of
		$D_Y$ give
		\begin{align*}
			\phi_\tau(x)
			&\ge F(x,y_0(x))-\frac\tau2\norm{y_0(x)}^2
			\ge\phi(x)-\frac\tau2D_Y^2.
		\end{align*}
		This proves~\eqref{eq:os-uniform-bias}.
		
		The lower smoothness inequality for $F(\cdot,y)$ shows that
		$x\mapsto F(x,y)+(L/2)\norm x^2$ is convex for every $y\in Y$.
		Taking the pointwise maximum over $y$ preserves convexity, and hence every
		$\phi_\tau$ is $L$-weakly convex.  Moreover,
		$\phi_\tau\ge\phi_{\inf}-\tau D_Y^2/2$, so the proximal objectives below
		are coercive on the closed set $X$ and attain their minima.
		
		For fixed $z$, set
		\[
		P_{\tau,z}(x):=\phi_\tau(x)+L\norm{x-z}^2,
		\qquad \delta_\tau:=\frac\tau2D_Y^2.
		\]
		Since $\phi_\tau$ is $L$-weakly convex, $P_{\tau,z}$ is $L$-strongly
		convex on $X$.  Applying strong convexity of $P_{0,z}$ at its minimizer and
		then~\eqref{eq:os-uniform-bias} gives
		\begin{align}
			\frac L2\norm{x_\tau^\star(z)-x_0^\star(z)}^2
			&\le P_{0,z}(x_\tau^\star(z))-P_{0,z}(x_0^\star(z))\nonumber\\
			&\le P_{\tau,z}(x_\tau^\star(z))+\delta_\tau
			-P_{0,z}(x_0^\star(z))\nonumber\\
			&\le P_{\tau,z}(x_0^\star(z))+\delta_\tau
			-P_{0,z}(x_0^\star(z))\nonumber\\
			&\le\delta_\tau.\label{eq:os-prox-transfer-proof}
		\end{align}
		Rearranging~\eqref{eq:os-prox-transfer-proof} proves
		\eqref{eq:os-prox-transfer}.  Using~\eqref{eq:os-envelope-gradient},
		\begin{align*}
			\norm{\nabla p_0(z)-\nabla p_\tau(z)}
			&=2L\norm{x_\tau^\star(z)-x_0^\star(z)}
			\le2D_Y\sqrt{L\tau},
		\end{align*}
		which is~\eqref{eq:os-gradient-transfer}.  Finally,
		$\norm{a+b}^2\le2\norm a^2+2\norm b^2$ gives
		\eqref{eq:os-squared-transfer}.
	\end{proof}
	
	The next lemma extracts the envelope-gradient estimates already contained in
	the two descent analyses.
	
	\begin{lemma}\label{lem:os-envelope-budgets}
		Suppose that Assumptions~\ref{ass:model} and~\ref{ass:oracle} hold, and let
		$J$ be independent and uniform on $\{0,\ldots,T-1\}$.  For SPDE with
		$0<\tau\le L$ and the parameters in~\eqref{eq:step-damping} and
		\eqref{eq:center-relaxation},
		\begin{equation}\label{eq:os-spde-envelope-budget}
			\E\norm{\nabla p_\tau(z_J)}^2
			\le\frac{8LB}{\beta T}+\frac{2400\sigma^2}{\beta m}.
		\end{equation}
		If Assumption~\ref{ass:paired} also holds and
		$b\ge16C_\star q\chi^2/\beta^2$, then VR-SPDE satisfies
		\begin{equation}\label{eq:os-vr-envelope-budget}
			\E\norm{\nabla p_\tau(z_J)}^2
			\le\frac{16LB}{\beta T}
			+\frac{96C_\star\sigma^2}{\beta^2B_{\rm r}}.
		\end{equation}
		Under Assumption~\ref{ass:strong-concavity}, the corresponding
		unregularized SPDE and VR-SPDE estimates are obtained from
		\eqref{eq:os-spde-envelope-budget} and
		\eqref{eq:os-vr-envelope-budget}, respectively, by replacing
		$(p_\tau,\beta)$ with $(p_0,\beta_{\rm sc})$.
	\end{lemma}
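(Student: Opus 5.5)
The plan is to read both estimates directly off the summed descent budgets already established, using $\nabla p=\nabla p_\tau$ in the regularized analysis and the independence of $J$. For SPDE, we start from \eqref{eq:sum-D} in Lemma~\ref{lem:budget}, which gives
\[
\frac{\beta}{8L}\sum_{t=0}^{T-1}\E\norm{\nabla p(z_t)}^2\le M_T=B+\frac{300T\sigma^2}{Lm}.
\]
Since $p=p_\tau$ by the identification following \eqref{eq:os-value-envelope}, and $J$ is uniform on $\{0,\ldots,T-1\}$ and independent of all oracle samples, we have
\[
\E\norm{\nabla p_\tau(z_J)}^2=\frac1T\sum_{t<T}\E\norm{\nabla p_\tau(z_t)}^2\le\frac{8LM_T}{\beta T}.
\]
Substituting $M_T$ gives exactly $8LB/(\beta T)+2400\sigma^2/(\beta m)$, which is \eqref{eq:os-spde-envelope-budget}. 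The independence of $J$ matters here: it is what lets us condition on $J=t$ without disturbing the law of $z_t$. Integrability is already supplied by Lemma~\ref{lem:feasibility} together with the linear growth of $\nabla p$.

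For VR-SPDE, the argument is the same with \eqref{eq:vr-retained-budget} in Lemma~\ref{lem:vr-budget} in place of \eqref{eq:sum-D}. That lemma is valid under the stated batch condition $b\ge16C_\star q\chi^2/\beta^2$. Dropping the nonnegative term $\mathcal A_T/8$ gives
\[
\sum_{t<T}\E\norm{\nabla p(z_t)}^2\le\frac{16LM_T^{\rm vr}}{\beta},\qquad M_T^{\rm vr}=B+\frac{6C_\star T\sigma^2}{L\beta B_{\rm r}}.
\]
Averaging over the uniform, independent $J$ yields
\[
\frac{16LB}{\beta T}+\frac{96C_\star\sigma^2}{\beta^2B_{\rm r}},
\]
which is \eqref{eq:os-vr-envelope-budget}.

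For the strongly concave statement, we invoke the substitution map from the proof of Lemma~\ref{lem:sc-plain-residual}, and from Lemma~\ref{lem:sc-vr-residual} for VR-SPDE. Under this map the drift inequality, and therefore the analogues of \eqref{eq:sum-D} and \eqref{eq:vr-retained-budget}, hold with $(p,\beta)$ replaced by $(p_{\rm sc},\beta_{\rm sc})$, the same constants, and the same initial bound $B$. Since $p_{\rm sc}=p_0$, the averaging step above carries over verbatim.

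The only step needing care is this last one. We must confirm that the center-descent term $-\frac{\beta}{8L}\norm{\nabla p(z_t)}^2$ in Lemma~\ref{lem:virtual-joint} survives the substitution with $p_{\rm sc}$. It does, because that lemma uses only the $6L$-smoothness of the envelope and the formula $\nabla p(z)=2L(z-x^\star(z))$. Both hold for $p_{\rm sc}$: Lemma~\ref{lem:sensitivity} goes through with $\tau$ replaced by $\mu$, and the primal sensitivity constant $2$ does not depend on the dual modulus. No regularization-bias term arises at this stage, because the bias enters only through Lemma~\ref{lem:os-transfer}, which is used later.
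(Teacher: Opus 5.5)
Your proposal is correct and follows essentially the paper's proof. You average over the independent uniform index $J$, read off the envelope term from the summed budgets \eqref{eq:sum-D} and \eqref{eq:vr-retained-budget} (the paper quotes their immediate consequences, the last estimates in \eqref{eq:component-budgets} and \eqref{eq:vr-component-budgets}), substitute $M_T$ and $M_T^{\rm vr}$, and pass to $(p_0,\beta_{\rm sc})$ via the strongly-concave substitution with $p_{\rm sc}=p_0$; your extra check that the $6L$-smoothness and $\nabla p_{\rm sc}(z)=2L(z-x^\star(z))$ survive with $\tau$ replaced by $\mu$ adds detail the paper only asserts.
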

	\begin{proof}
		Independence and uniformity of $J$ give
		\[
		\E\norm{\nabla p_\tau(z_J)}^2
		=\frac1T\sum_{t=0}^{T-1}
		\E\norm{\nabla p_\tau(z_t)}^2.
		\]
		For SPDE, the last estimate in~\eqref{eq:component-budgets} and the
		definition~\eqref{eq:M} yield
		\begin{align*}
			\E\norm{\nabla p_\tau(z_J)}^2
			&\le\frac{8L}{\beta T}
			\left(B+\frac{300T\sigma^2}{Lm}\right)
			=\frac{8LB}{\beta T}+\frac{2400\sigma^2}{\beta m}.
		\end{align*}
		For VR-SPDE, the last estimate in~\eqref{eq:vr-component-budgets} and
		\eqref{eq:vr-small-batch-condition} give
		\begin{align*}
			\E\norm{\nabla p_\tau(z_J)}^2
			&\le\frac{16L}{\beta T}
			\left(B+\frac{6C_\star T\sigma^2}
			{L\beta B_{\rm r}}\right)\\
			&=\frac{16LB}{\beta T}
			+\frac{96C_\star\sigma^2}{\beta^2B_{\rm r}}.
		\end{align*}
		The strongly-concave drift used in
		Lemma~\ref{lem:sc-plain-residual} retains the same envelope term with
		$(p_0,\beta_{\rm sc})$.  Likewise, the strongly-concave substitution in
		Lemma~\ref{lem:sc-vr-residual} retains the corresponding VR envelope term.
		This proves the last assertion.
	\end{proof}
	
	\subsection{Nonconvex--strongly concave setting}
	
	Strong concavity removes the artificial dual regularization, so the envelope
	controlled by the descent inequality is already the envelope of the original
	value function.
	
	\begin{theorem}\label{thm:os-ncsc-complexity}
		Suppose that Assumptions~\ref{ass:model},~\ref{ass:oracle}, and
		\ref{ass:strong-concavity} hold, with the strong-concavity modulus chosen so
		that $0<\mu\le L$; any larger valid modulus may be replaced by
		$\min\{\mu,L\}$.  Let $\eps>0$ and fix a known bound
		$\overline B\ge B$.
		
		For SPDE, use the unregularized specialization and the parameters in
		\eqref{eq:sc-parameters}, and set
		\begin{equation}\label{eq:os-ncsc-spde-budgets}
			T=\max\left\{1,\left\lceil
			\frac{16L\overline B}{\beta_{\rm sc}\eps^2}\right\rceil\right\},
			\qquad
			m=\max\left\{1,\left\lceil
			\frac{4800\sigma^2}{\beta_{\rm sc}\eps^2}\right\rceil\right\}.
		\end{equation}
		Then $\E\mathcal S_{\rm OS}(z_{\rm out})^2\le\eps^2$, and the number of SFO calls
		satisfies
		\begin{equation}\label{eq:os-ncsc-spde-complexity}
			N_{{\rm os},{\rm sc}}
			\le3\left(1+\frac{16L\overline B}
			{\beta_{\rm sc}\eps^2}\right)
			\left(1+\frac{4800\sigma^2}
			{\beta_{\rm sc}\eps^2}\right)
			=O(\kappa\eps^{-4})
		\end{equation}
		for fixed positive $L,\overline B$, and $\sigma$.
		
		If Assumption~\ref{ass:paired} also holds, run the unregularized VR-SPDE
		specialization with
		\begin{equation}\label{eq:os-ncsc-vr-budgets}
			\begin{gathered}
				T=\max\left\{1,\left\lceil
				\frac{32L\overline B}{\beta_{\rm sc}\eps^2}\right\rceil\right\},
				\qquad
				B_{\rm r}=\max\left\{1,\left\lceil
				\frac{192C_\star\sigma^2}
				{\beta_{\rm sc}^2\eps^2}\right\rceil\right\},\\
				A_{{\rm os},{\rm sc}}:=\frac{16C_\star\chi^2}{\beta_{\rm sc}^2},
				\qquad
				q=\max\left\{1,\left\lfloor
				\sqrt{\frac{B_{\rm r}}{A_{{\rm os},{\rm sc}}}}\right\rfloor\right\},
				\qquad b=\left\lceil A_{{\rm os},{\rm sc}}q\right\rceil.
			\end{gathered}
		\end{equation}
		Then $\E\mathcal S_{\rm OS}(z_{\rm out})^2\le\eps^2$ and
		\begin{align}
			N_{{\rm os},{\rm vr},{\rm sc}}
			&\le B_{\rm r}+18T\min\left\{B_{\rm r},
			\sqrt{A_{{\rm os},{\rm sc}}B_{\rm r}}\right\}\nonumber\\
			&=O\!\left(\kappa\eps^{-2}
			+\chi\kappa^{3/2}\eps^{-3}\right)
			\label{eq:os-ncsc-vr-complexity}
		\end{align}
		for fixed positive $\sigma$ and $\chi$.  If $\sigma=0$, both methods use
		$O(\sqrt\kappa\eps^{-2})$ SFO calls.
	\end{theorem}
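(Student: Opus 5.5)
The plan is to reduce everything to the strongly concave clause of Lemma~\ref{lem:os-envelope-budgets}, using $z_{\rm out}=z_J$. In the unregularized specialization the regularization parameter is zero, so the envelope controlled by the descent analysis is $p_{\rm sc}=p_0$. Its gradient is exactly the OS measure: $\mathcal S_{\rm OS}(z)=\norm{\nabla p_0(z)}$. Hence no transfer step through Lemma~\ref{lem:os-transfer} is needed, and the bias term $8L\tau D_Y^2$ does not appear. The one point to confirm is that the strongly concave drift really retains the term $-\beta_{\rm sc}\norm{\nabla p_0(z_t)}^2/(8L)$ with $p_0$ the original envelope. This follows because $G_{\rm sc}$ has no dual regularization and $p_{\rm sc}(z)=\min_x\{\phi(x)+L\norm{x-z}^2\}$ by the minimax theorem. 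The hypothesis $\mu\le L$ is needed only so that the scalar relations after~\eqref{eq:sc-parameters} hold ($s_{\rm sc}\le2L$). I expect this identification to be the main obstacle. It is asserted in the last sentence of Lemma~\ref{lem:os-envelope-budgets}, so I will rely on it.

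For SPDE, the strongly concave version of~\eqref{eq:os-spde-envelope-budget} gives
\[
\E\mathcal S_{\rm OS}(z_J)^2\le\frac{8LB}{\beta_{\rm sc}T}+\frac{2400\sigma^2}{\beta_{\rm sc}m}.
\]
With the choices~\eqref{eq:os-ncsc-spde-budgets} and $B\le\overline B$, the first term is at most $\eps^2/2$. The second term is at most $\eps^2/2$, or is zero when $\sigma=0$. The SFO count is $3mT$, and $\max\{1,\lceil a\rceil\}\le1+a$ gives the displayed product. The relation $\beta_{\rm sc}^{-1}=\Theta(\sqrt\kappa)$ then yields $O(\kappa\eps^{-4})$, and $O(\sqrt\kappa\eps^{-2})$ when $m=1$.

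For VR-SPDE, I first check $b=\lceil A_{{\rm os},{\rm sc}}q\rceil\ge16C_\star q\chi^2/\beta_{\rm sc}^2$. This makes the strongly concave version of~\eqref{eq:os-vr-envelope-budget} applicable:
\[
\E\mathcal S_{\rm OS}(z_J)^2\le\frac{16LB}{\beta_{\rm sc}T}+\frac{96C_\star\sigma^2}{\beta_{\rm sc}^2B_{\rm r}}.
\]
By~\eqref{eq:os-ncsc-vr-budgets}, each term is at most $\eps^2/2$.

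The oracle count follows the case analysis in the proof of Theorem~\ref{thm:sc-vr-complexity} verbatim, with $A_{{\rm os},{\rm sc}}$ in place of $A_{{\rm vr},{\rm sc}}$.
\begin{itemize}
\item If $q\ge2$, then $B_{\rm r}\ge4A$, $q\ge\tfrac12\sqrt{B_{\rm r}/A}$, and $b\le2\sqrt{AB_{\rm r}}$. Therefore $N\le B_{\rm r}+3TB_{\rm r}/q+6Tb\le B_{\rm r}+18T\sqrt{AB_{\rm r}}$.
\item If $q=1$, then $N=3TB_{\rm r}$ with $B_{\rm r}<4A$, so the same bound holds.
\end{itemize}

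Substituting the orders $\beta_{\rm sc}=\Theta(\kappa^{-1/2})$, $B_{\rm r}=\Theta(\kappa\eps^{-2})$, $A=\Theta(\chi^2\kappa)$ and $T=O(\sqrt\kappa\eps^{-2})$ gives $O(\kappa\eps^{-2}+\chi\kappa^{3/2}\eps^{-3})$. When $\sigma=0$ we have $B_{\rm r}=1<A$, hence $q=1$ and $N\le3T=O(\sqrt\kappa\eps^{-2})$.
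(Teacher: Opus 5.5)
Your proposal is correct and follows the paper's proof essentially step for step. Both arguments rely on the same ingredients:
\begin{itemize}
\item reduction to the strongly concave clause of Lemma~\ref{lem:os-envelope-budgets} with $z_{\rm out}=z_J$ and $p_{\rm sc}=p_0$, so no transfer bias appears;
\item verification that each of the two error terms is at most $\eps^2/2$ under the stated budgets;
\item the refresh/paired-difference accounting from Theorem~\ref{thm:sc-vr-complexity}, combined with $\beta_{\rm sc}^{-1}=\Theta(\sqrt\kappa)$, to obtain the stated orders.
\end{itemize}
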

	\begin{proof}
		For SPDE, the strongly-concave form of
		\eqref{eq:os-spde-envelope-budget} and
		\eqref{eq:os-ncsc-spde-budgets} give
		\begin{align*}
			\E\mathcal S_{\rm OS}(z_{\rm out})^2
			&\le\frac{8LB}{\beta_{\rm sc}T}
			+\frac{2400\sigma^2}{\beta_{\rm sc}m}
			\le\frac{\eps^2}{2}+\frac{\eps^2}{2}=\eps^2.
		\end{align*}
		The three fresh batches per iteration give the first inequality in
		\eqref{eq:os-ncsc-spde-complexity}.  Since
		$\beta_{\rm sc}=\Theta(\kappa^{-1/2})$, both $T$ and $m$ are
		$O(\sqrt\kappa\eps^{-2})$, proving its stated order.
		
		For VR-SPDE,~\eqref{eq:os-ncsc-vr-budgets} implies
		$b\ge16C_\star q\chi^2/\beta_{\rm sc}^2$.  The strongly-concave form of
		\eqref{eq:os-vr-envelope-budget} therefore gives
		\begin{align*}
			\E\mathcal S_{\rm OS}(z_{\rm out})^2
			&\le\frac{16LB}{\beta_{\rm sc}T}
			+\frac{96C_\star\sigma^2}
			{\beta_{\rm sc}^2B_{\rm r}}
			\le\frac{\eps^2}{2}+\frac{\eps^2}{2}=\eps^2.
		\end{align*}
		The refresh and paired-difference accounting used in
		\eqref{eq:sc-vr-oracle} applies without change and proves the first line of
		\eqref{eq:os-ncsc-vr-complexity}.  Moreover,
		\begin{align*}
			T&=O(\sqrt\kappa\eps^{-2}),&
			B_{\rm r}&=O(\kappa\eps^{-2}),&
			A_{{\rm os},{\rm sc}}&=O(\chi^2\kappa),\\
			q&=\Theta((\chi\eps)^{-1}),&
			b&=O(\chi\kappa\eps^{-1}),&
			\sqrt{A_{{\rm os},{\rm sc}}B_{\rm r}}
			&=O(\chi\kappa\eps^{-1}).
		\end{align*}
		Substitution into the query bound proves the second line of
		\eqref{eq:os-ncsc-vr-complexity}.  When $\sigma=0$, the mini-batch method
		uses $m=1$, while the VR method has $B_{\rm r}=q=1$; both costs are
		$O(T)=O(\sqrt\kappa\eps^{-2})$.
	\end{proof}
	
	\subsection{Nonconvex--concave setting}
	
	In the merely concave setting, the regularized envelope $p_\tau$ differs
	from the envelope $p_0$ of the original value function.  The square-root
	transfer error in~\eqref{eq:os-gradient-transfer} requires a smaller
	regularization parameter than the one used for game stationarity.
	
	\begin{theorem}\label{thm:os-ncc-complexity}
		Suppose that Assumptions~\ref{ass:model} and~\ref{ass:oracle} hold.  Let
		$\eps>0$, fix a known bound $\overline B\ge B$ independent of $\eps$ and
		$\tau$, and set
		\begin{equation}\label{eq:os-tau-choice}
			\tau_{\rm os}:=\min\left\{L,
			\frac{\eps^2}{16LD_Y^2}\right\},
			\qquad
			K_\eps^{\rm os}:=\max\left\{1,\frac{4LD_Y}{\eps}\right\}.
		\end{equation}
		Choose $h,\alpha,k$ by~\eqref{eq:step-damping} and $\beta$ by
		\eqref{eq:center-relaxation}, with $\tau=\tau_{\rm os}$.
		
		For SPDE, set
		\begin{equation}\label{eq:os-ncc-spde-budgets}
			T=\max\left\{1,\left\lceil
			\frac{64L\overline B}{\beta\eps^2}\right\rceil\right\},
			\qquad
			m=\max\left\{1,\left\lceil
			\frac{19200\sigma^2}{\beta\eps^2}\right\rceil\right\}.
		\end{equation}
		Then $\E\mathcal S_{\rm OS}(z_{\rm out})^2\le\eps^2$ and
		\begin{align}
			N_{{\rm os},{\rm ncc}}
			&\le3\left(1+\frac{64L\overline B}{\beta\eps^2}\right)
			\left(1+\frac{19200\sigma^2}{\beta\eps^2}\right)\nonumber\\
			&=O\!\left(1+
			\frac{(L\overline B+\sigma^2)K_\eps^{\rm os}}{\eps^2}
			+\frac{L\overline B\sigma^2(K_\eps^{\rm os})^2}{\eps^4}
			\right).
			\label{eq:os-ncc-spde-complexity}
		\end{align}
		For fixed problem data, $\overline B$, and $\sigma>0$, this gives
		$T=m=O(\eps^{-3})$ and $N_{{\rm os},{\rm ncc}}=O(\eps^{-6})$.
		
		If Assumption~\ref{ass:paired} also holds, run VR-SPDE with
		\begin{equation}\label{eq:os-ncc-vr-budgets}
			\begin{gathered}
				T=\max\left\{1,\left\lceil
				\frac{128L\overline B}{\beta\eps^2}\right\rceil\right\},
				\qquad
				B_{\rm r}=\max\left\{1,\left\lceil
				\frac{768C_\star\sigma^2}{\beta^2\eps^2}\right\rceil\right\},\\
				A_{{\rm os},{\rm ncc}}:=\frac{16C_\star\chi^2}{\beta^2},
				\qquad
				q=\max\left\{1,\left\lfloor
				\sqrt{\frac{B_{\rm r}}{A_{{\rm os},{\rm ncc}}}}\right\rfloor\right\},
				\qquad b=\left\lceil A_{{\rm os},{\rm ncc}}q\right\rceil.
			\end{gathered}
		\end{equation}
		Then $\E\mathcal S_{\rm OS}(z_{\rm out})^2\le\eps^2$ and
		\begin{align}
			N_{{\rm os},{\rm vr},{\rm ncc}}
			&\le B_{\rm r}+18T\min\left\{B_{\rm r},
			\sqrt{A_{{\rm os},{\rm ncc}}B_{\rm r}}\right\}\nonumber\\
			&=O(\eps^{-4}+\chi\eps^{-6})
			=O(\eps^{-6})
			\label{eq:os-ncc-vr-complexity}
		\end{align}
		for fixed positive problem data, $\sigma$, and $\chi$.  If $\sigma=0$,
		both methods use $O(\eps^{-3})$ SFO calls.
	\end{theorem}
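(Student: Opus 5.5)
The proof combines the transfer inequality~\eqref{eq:os-squared-transfer} of Lemma~\ref{lem:os-transfer} with the envelope budgets of Lemma~\ref{lem:os-envelope-budgets}, both applied with $\tau=\tau_{\rm os}$. Since $0<\tau_{\rm os}\le L$, all parameter relations~\eqref{eq:step-damping} and~\eqref{eq:center-relaxation} are admissible, so both lemmas apply. With $z_{\rm out}=z_J$, the first step is
\[
\E\mathcal S_{\rm OS}(z_J)^2\le2\E\norm{\nabla p_{\tau_{\rm os}}(z_J)}^2+8L\tau_{\rm os}D_Y^2 .
\]
The choice $\tau_{\rm os}\le\eps^2/(16LD_Y^2)$ makes the bias term at most $\eps^2/2$. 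It therefore remains to make $2\E\norm{\nabla p_{\tau_{\rm os}}(z_J)}^2\le\eps^2/2$, that is, $\E\norm{\nabla p_{\tau_{\rm os}}(z_J)}^2\le\eps^2/4$.

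For SPDE, \eqref{eq:os-spde-envelope-budget} gives the bound $8LB/(\beta T)+2400\sigma^2/(\beta m)$. With $T\ge64L\overline B/(\beta\eps^2)$ and $m\ge19200\sigma^2/(\beta\eps^2)$, each term is at most $\eps^2/8$. For VR-SPDE, the choice $b=\lceil A_{{\rm os},{\rm ncc}}q\rceil$ gives $b\ge16C_\star q\chi^2/\beta^2$, so \eqref{eq:os-vr-envelope-budget} applies. The budgets $T\ge128L\overline B/(\beta\eps^2)$ and $B_{\rm r}\ge768C_\star\sigma^2/(\beta^2\eps^2)$ then give $16LB/(\beta T)\le\eps^2/8$ and $96C_\star\sigma^2/(\beta^2B_{\rm r})\le\eps^2/8$. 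When $\sigma=0$ the variance terms vanish.

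Oracle counts follow directly. For SPDE, three batches per iteration and $\max\{1,\lceil a\rceil\}\le1+a$ give $3mT$ bounded as displayed. For VR-SPDE, the refresh and paired-difference accounting from the proof of Theorem~\ref{thm:vr-complexity} goes through verbatim, with cases $q\ge2$ and $q=1$ and using $A_{{\rm os},{\rm ncc}}\ge1$. This yields $B_{\rm r}+18T\min\{B_{\rm r},\sqrt{A_{{\rm os},{\rm ncc}}B_{\rm r}}\}$.

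The main point needing care is the orders, via~\eqref{eq:beta-rate}: $1/\beta=\Theta(\sqrt{L/\tau_{\rm os}})$. When $\tau_{\rm os}=\eps^2/(16LD_Y^2)$, $\sqrt{L/\tau_{\rm os}}=4LD_Y/\eps=K_\eps^{\rm os}$, and when $\tau_{\rm os}=L$ it equals $1\le K_\eps^{\rm os}$. Hence $1/\beta=\Theta(K_\eps^{\rm os})=\Theta(\eps^{-1})$ for small $\eps$. Expanding the SPDE product gives~\eqref{eq:os-ncc-spde-complexity}, with $T=m=O(\eps^{-3})$ and $N=O(\eps^{-6})$. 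For VR-SPDE, $T=O(\eps^{-3})$, $B_{\rm r}=\Theta(\eps^{-4})$, and $A_{{\rm os},{\rm ncc}}=\Theta(\chi^2\eps^{-2})$. Thus $\sqrt{A_{{\rm os},{\rm ncc}}B_{\rm r}}=O(\chi\eps^{-3})$, and $N=O(\eps^{-4}+\chi\eps^{-6})$. If $\sigma=0$, then $m=1$, and $B_{\rm r}=1<A_{{\rm os},{\rm ncc}}$ forces $q=1$. Both costs are then $O(T)=O(\eps^{-3})$.
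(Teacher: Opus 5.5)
Your proposal is correct and follows essentially the same route as the paper. Both arguments take expectations in the transfer inequality \eqref{eq:os-squared-transfer}, bound the bias by $8L\tau_{\rm os}D_Y^2\le\eps^2/2$, split each envelope budget from Lemma~\ref{lem:os-envelope-budgets} into $\eps^2/8+\eps^2/8$, reuse the refresh/difference accounting of Theorem~\ref{thm:vr-complexity}, and obtain the orders from \eqref{eq:beta-rate}. One small sharpening: $\sqrt{L/\tau_{\rm os}}=\max\{1,4LD_Y/\eps\}=K_\eps^{\rm os}$ holds exactly in both cases of the minimum, which is what the paper uses.
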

	\begin{proof}
		The choice~\eqref{eq:os-tau-choice} gives
		\begin{equation}\label{eq:os-bias-allocation}
			8L\tau_{\rm os}D_Y^2\le\frac{\eps^2}{2}.
		\end{equation}
		For SPDE,~\eqref{eq:os-spde-envelope-budget} and
		\eqref{eq:os-ncc-spde-budgets} imply
		\begin{align}
			\E\norm{\nabla p_{\tau_{\rm os}}(z_J)}^2
			&\le\frac{8LB}{\beta T}
			+\frac{2400\sigma^2}{\beta m}
			\le\frac{\eps^2}{8}+\frac{\eps^2}{8}
			=\frac{\eps^2}{4}.
			\label{eq:os-ncc-spde-regularized}
		\end{align}
		Taking expectations in~\eqref{eq:os-squared-transfer} and using
		\eqref{eq:os-bias-allocation}--\eqref{eq:os-ncc-spde-regularized} gives
		\begin{align*}
			\E\mathcal S_{\rm OS}(z_{\rm out})^2
			&\le2\E\norm{\nabla p_{\tau_{\rm os}}(z_J)}^2
			+8L\tau_{\rm os}D_Y^2\le\eps^2.
		\end{align*}
		The three batches per iteration give the first line of
		\eqref{eq:os-ncc-spde-complexity}.  By~\eqref{eq:beta-rate} and
		\eqref{eq:os-tau-choice},
		\begin{equation}\label{eq:os-beta-rate}
			\frac1\beta=\Theta\!\left(\sqrt{\frac L{\tau_{\rm os}}}\right),
			\qquad
			\sqrt{\frac L{\tau_{\rm os}}}=K_\eps^{\rm os}.
		\end{equation}
		Expanding the first line and using~\eqref{eq:os-beta-rate} proves the
		second line of~\eqref{eq:os-ncc-spde-complexity}.  For sufficiently small
		$\eps$, $K_\eps^{\rm os}=\Theta(\eps^{-1})$, proving the stated orders.
		
		For VR-SPDE,~\eqref{eq:os-ncc-vr-budgets} ensures
		$b\ge16C_\star q\chi^2/\beta^2$.  Hence
		\eqref{eq:os-vr-envelope-budget} gives
		\begin{align}
			\E\norm{\nabla p_{\tau_{\rm os}}(z_J)}^2
			&\le\frac{16LB}{\beta T}
			+\frac{96C_\star\sigma^2}{\beta^2B_{\rm r}}
			\le\frac{\eps^2}{8}+\frac{\eps^2}{8}
			=\frac{\eps^2}{4}.
			\label{eq:os-ncc-vr-regularized}
		\end{align}
		Equations~\eqref{eq:os-squared-transfer},
		\eqref{eq:os-bias-allocation}, and
		\eqref{eq:os-ncc-vr-regularized} prove
		$\E\mathcal S_{\rm OS}(z_{\rm out})^2\le\eps^2$.
		
		The query accounting in~\eqref{eq:vr-oracle-budget} is independent of the
		choice of $\tau$ and proves the first line of
		\eqref{eq:os-ncc-vr-complexity}.  For fixed positive $\sigma$ and $\chi$,
		\eqref{eq:os-beta-rate} and~\eqref{eq:os-ncc-vr-budgets} give
		\begin{align*}
			T&=O(\eps^{-3}),&
			B_{\rm r}&=O(\eps^{-4}),&
			A_{{\rm os},{\rm ncc}}&=O(\chi^2\eps^{-2}),\\
			q&=\Theta((\chi\eps)^{-1}),&
			b&=O(\chi\eps^{-3}),&
			\sqrt{A_{{\rm os},{\rm ncc}}B_{\rm r}}
			&=O(\chi\eps^{-3}).
		\end{align*}
		Substitution gives
		\[
		N_{{\rm os},{\rm vr},{\rm ncc}}
		=O(\eps^{-4})+O(\eps^{-3})O(\chi\eps^{-3})
		=O(\eps^{-4}+\chi\eps^{-6}).
		\]
		When $\sigma=0$, SPDE uses $m=1$, while VR-SPDE has
		$B_{\rm r}=q=1$.  In both cases the SFO count is $O(T)=O(\eps^{-3})$.
	\end{proof}
	
	The NC--C OS and GS parameter choices differ for a structural reason.  Game
	stationarity incurs the direct dual bias $O(\tau)$ and therefore permits
	$\tau=\Theta(\eps)$.  Optimization stationarity transfers a Moreau gradient,
	whose norm bias is $O(\sqrt\tau)$, and consequently requires
	$\tau=\Theta(\eps^2)$.  This reduces the center rate from
	$\Theta(\eps^{1/2})$ to $\Theta(\eps)$ and yields the
	$O(\eps^{-6})$ SFO order for both stochastic estimators under the present
	analysis.

\section{Conclusions}\label{sec:conclusion}

We developed single-loop stochastic projected damped extragradient
methods for smooth nonconvex--(strongly) concave minimax
optimization, with complexity guarantees for both game
stationarity and optimization stationarity.
The proposed SPDE method combines predictor--corrector
projections, damped dual momentum, normal-cone corrections,
and a relaxed proximal-center update.
Its variance-reduced variant, VR-SPDE, maintains a recursive
gradient estimator along the same sequence of query points.
Both methods update the primal, dual, momentum, normal-cone,
and center states within a single loop, without inner iterative
solvers for regularized subproblems.

For game stationarity, SPDE returns a feasible pair satisfying
$\E[\mathcal R(x_{\rm out},y_{\rm out})^2]\le\eps^2$
with total stochastic first-order oracle (SFO) complexities of
$O(\kappa\eps^{-4})$ and $O(\eps^{-5})$ in the
nonconvex--strongly concave and nonconvex--concave settings,
respectively, where $\kappa=L/\mu$.
These guarantees require an unbiased stochastic gradient oracle
with uniformly bounded variance.
With paired sample evaluations and an additional mean-square
Lipschitz condition on the stochastic gradients, VR-SPDE improves
the corresponding GS complexities to
$O(\kappa^{3/2}\eps^{-3})$ and $O(\eps^{-9/2})$.

For optimization stationarity, the same algorithms, with
criterion-specific parameter choices, return a proximal center
$z_{\rm out}=z_J$ satisfying
$\E[\mathcal S_{\rm OS}(z_{\rm out})^2]\le\eps^2$,
where $\mathcal S_{\rm OS}$ is defined through the Moreau envelope
of the constrained primal value function of the original problem.
SPDE achieves SFO complexities of $O(\kappa\eps^{-4})$ and
$O(\eps^{-6})$ in the nonconvex--strongly concave and
nonconvex--concave settings, respectively, while VR-SPDE achieves
$O(\kappa^{3/2}\eps^{-3})$ and $O(\eps^{-6})$.
Thus variance reduction improves the stated OS bound in the
strongly concave setting, while both methods attain the same
$O(\eps^{-6})$ bound in the merely concave setting.

To the best of our knowledge, these results provide the
best-known SFO complexity guarantees among single-loop
stochastic first-order methods for the respective stationarity
criteria and problem classes.
The OS guarantees also match the best-known multi-loop
dependence on the target accuracy in both settings.
Together, these results establish that a single-loop damped
extragradient framework can support improved GS guarantees
and competitive OS guarantees under their respective oracle
assumptions.

\end{document}